\newcommand{\f}{\frac}
\newcommand{\del}{\partial}
\newcommand{\im}{\operatorname{Im}}
\newcommand{\R}{\mathbb R}
\newcommand{\C}{\mathbb C}
\newcommand{\N}{\mathbb N}
\newcommand{\re}{\operatorname{Re}}
\newcommand{\eps}{\varepsilon}
\renewcommand{\epsilon}{\varepsilon}
\newcommand{\Om}{\Omega}
\newcommand{\supp}{\operatorname{supp}}
\newcommand{\dist}{\operatorname{dist}}
\newcommand{\dom}{\operatorname{dom}}
\newcommand{\h}{\mathcal{H}}
\newenvironment{enuma}{\begin{enumerate}[(a)]}{\end{enumerate}}
\newenvironment{enumi}{\begin{enumerate}[(i)]}{\end{enumerate}}
\newcommand{\arg@parser}[1]{%
  \advance\arg@count\@ne
  \expandafter\let\csname arg\romannumeral\arg@count\endcsname\comma@entry
}
\newcommand\res[1]{
  \arg@count=\z@
  \comma@parse{ \lambda,A }\arg@parser 
  \arg@count=\z@
  \comma@parse{#1}\arg@parser
  \ifnum\arg@count>2 %
    \@latex@error{Too many arguments}{%
      The macro \string\mycmd\space got \the\arg@count\space
       arguments,\MessageBreak
      but expected are 2 arguments.\MessageBreak
      \@ehd
    }%
  \fi
  \edef\process@me{%
    \noexpand\@res
    {\etex@unexpanded\expandafter{\argi}}%
    {\etex@unexpanded\expandafter{\argii}}%
  }%
  \process@me
}
\newcommand{\@res}[2]{%
  \ensuremath\left( #1 - #2 \right)^{-1}
}
\newcommand\p[1]{
  \arg@count=\z@
  \comma@parse{  }\arg@parser 
  \arg@count=\z@
  \comma@parse{#1}\arg@parser
  \ifnum\arg@count=2 %
  \else
    \@latex@error{Wrong number of mandatory arguments}{%
      The macro \string\p\space got \the\numexpr\arg@count-2\relax\space
      mandatory arguments,\MessageBreak
      but expected are 3 mandatory arguments.\MessageBreak
      \@ehd
    }%
  \fi
  \edef\process@me{%
    \noexpand\@p
    {\etex@unexpanded\expandafter{\argi}}%
    {\etex@unexpanded\expandafter{\argii}}%
  }%
  \process@me
}
\newcommand{\@p}[2]{%
  \ensuremath \left\langle #1 , #2 \right\rangle
}
\numberwithin{equation}{section}
\theoremstyle{definition}
\newtheorem{de}{Definition}[section]
\theoremstyle{plain}
\newtheorem{prop}[de]{Proposition}
\newtheorem{lemma}[de]{Lemma}
\newtheorem{theorem}[de]{Theorem}
\newtheorem{corollary}[de]{Corollary}
\theoremstyle{remark}
\newtheorem{remark}[de]{Remark}
\newcommand{\limess}{\sigma_e\big((T_n)_{n\in\N}\big)}
\newcommand{\limpseu}[1]{\Lambda_{e,#1}\big((T_n)_{n\in\N}\big)}
\newcommand{\Tn}{\big((T_n)_{n\in\N}\big)}
\newcommand{\SCI}{\operatorname{SCI}}
\newcommand{\diam}{\operatorname{diam}}
\title{On The Solvability Complexity Index for Unbounded Selfadjoint Operators and Schr\"odinger Operators}
\author{Frank R\"osler\thanks{School of Mathematics\newline 
Cardiff University, \newline
Senghennydd Road, \newline
Cardiff, \newline
CF24 4AG, \newline
United Kingdom\newline
Email: \href{mailto:roslerf@cardiff.ac.uk}{RoslerF@cardiff.ac.uk}}}
\date\today
\begin{document}

\maketitle

\begin{abstract}
	We study the solvability complexity index (SCI) for unbounded selfadjoint operators on separable Hilbert spaces and perturbations thereof. In particular, we show that if the extended essential spectrum of a selfadjoint operator is convex, then the SCI for computing its spectrum is equal to 1. This result is then extended to relatively compact perturbations of such operators and applied to Schr\"odinger operators with compactly supported (complex valued) potentials to obtain SCI=1 in this case, as well.
\end{abstract}

\section{Introduction}

The problem of computing spectra of partial differential operators is fundamental to many problems in physics with real world applications. Perhaps one of the most prominent examples of this is quantum mechanics, where the possible bound state energies of a particle subject to a force described by a potential function $V$ are given by the eigenvalues of the corresponding Schr\"odinger operator $-\Delta+V$. Generically, the spectral problem of such an operator cannot be solved explicitly and one has to resort to numerical methods. By practical constraints, any computer algorithm, which might be used to compute the spectrum, will only be able to handle a finite amount of information about the operator and perform a finite number of arithmetic operations on this information (in practice, this ``finite amount of information'' is usually given by some sort of discretisation of the domain, which approximates the infinite dimensional spectral problem by a finite dimensional one). In other words, any algorithm will always ``ignore'' an infinite amount of information about the operator. One might hope that by increasing the dimension of the approximation (or decreasing the step size of the discretisation), one will eventually obtain a reasonable approximation of the spectrum. Hence, it is a legitimate question to ask:
\begin{center}
	\textit{Given a class of operators $\Om$, does there exist a sequence of algorithms $\Gamma_n$ such that $\Gamma_n(T)\to\sigma(T)$ (in an appropriate sense) for all $T\in\Om$?}
\end{center}
It turns out that the answer to the above question is not always in the affirmative. Indeed, it has been shown in \cite{AHS} that if $\Om=L(\h)$ (the space of bounded operators on a separable Hilbert space $\h$), then for any sequence of algorithms there exists $T\in\Om$ whose spectrum is not approximated by that sequence.
This observation has led to the wider definition of the so-called \emph{Solvability Complexity Index} (SCI), introduced in \cite{Hansen11}, of which we will now give a brief review. 
\begin{de}\label{def:computational_problem}
	A \emph{computational problem} is a quadruple $(\Om,\Lambda,\Xi,\mathcal M)$, where 
	\begin{enumi}
		\item $\Om$ is a set, called the \emph{primary set},
		\item $\Lambda$ is a set of complex valued functions on $\Om$, called the \emph{evaluation set},
		\item $\mathcal M$ is a metric space,
		\item $\Xi:\Om\to M$ is a map, called the \emph{problem function}.
	\end{enumi}
\end{de}
In the above definition, $\Om$ is the set of objects that give rise to the computational problem, $\Lambda$ plays the role of providing the information accessible to the algorithm, and $\Xi:\Om\to \mathcal M$ gives the quantity that one wishes to compute numerically.

An example of a computational problem in the sense of Definition \ref{def:computational_problem} is given by the spectral problem discussed above. Indeed, given a separable Hilbert space $\h$ with orthonormal basis $\{e_i\}$, one can choose $\Om=L(\h)$, $\mathcal M = \{\text{compact subsets of }\C\}$, equipped with the Hausdorff metric, and $\Xi(T)=\sigma(T)$. For the evaluation set one could choose $\Lambda:=\{f_{ij}\,|\,i,j\in\N\}$, where $f_{ij}(T) = \langle Te_i,e_j\rangle$ give the matrix elements of an operator with respect to the basis $\{e_i\}$. 
\begin{de}\label{def:Algorithm}
	Let $(\Om,\Lambda,\Xi,\mathcal M)$ be a computational problem. An \emph{arithmetic algorithm} is a map $\Gamma:\Om\to\mathcal M$ such that for each $T\in\Om$ there exists a finite subset $\Lambda_\Gamma(T)\subset\Lambda$ such that
	\begin{enumi}
		\item the action of $\Gamma$ on $T$ depends only on $\{f(T)\}_{f\in\Lambda_\Gamma(T)}$,
		\item for every $S\in\Om$ with $f(T)=f(S)$ for all $f\in\Lambda_\Gamma(T)$ one has $\Lambda_\Gamma(S)=\Lambda_\Gamma(T)$,
		\item the action of $\Gamma$ on $T$ consists of performing only finitely many arithmetic operations on $\{f(T)\}_{f\in\Lambda_\Gamma(T)}$.
	\end{enumi}
\end{de}
We will refer to any arithmetic algorithm simply as an \emph{algorithm} from now on. For more general concepts the reader may consult \cite{AHS}.

In \cite{AHS} it has been shown that if $\Om$ is the set of compact operators on a separable Hilbert space $\h$, then there exists a sequence of algorithms $\Gamma_n:\Om\to\C$ such that $\Gamma_n(T)\to\sigma(T)$ (in Hausdorff sense) for all $T\in\Om$, while for the set of bounded selfadjoint operators $\Om=\{T\in L(\h)\,|\,T^*=T\}$ this is not possible. 

However, it turns out that there exists a family $\Gamma_{mn}$ of algorithms such that 
$$\lim_{n\to\infty}\lim_{m\to\infty}\Gamma_{mn}(T) = \sigma(T)$$
for all bounded selfadjoint operators. Hence, it \emph{is} possible to compute the spectrum of non-compact operators using algorithms, but the number of limits required may increase (this general phenomenon has first been observed by Doyle and McMullen in the context of finding zeros of polynomials, cf. \cite{DM}). In order to capture this phenomenon, the following definition has been made
\begin{de}[{\cite{AHS}}]\label{def:Tower}
	Let $(\Om,\Lambda,\Xi,\mathcal M)$ be a computational problem. A \emph{tower of algorithms} of height $k$ is a family $\Gamma_{n_1,n_2,\dots,n_k}:\Om\to\mathcal M$ of arithmetic algorithms such that for all $T\in\Om$
	\begin{align*}
		\Xi(T) = \lim_{n_k\to\infty}\cdots\lim_{n_1\to\infty}\Gamma_{n_1,n_2,\dots,n_k}(T).
	\end{align*}
\end{de}
The examples above show that the number of limits required to compute the problem function $\Xi$ is a measure for the numerical complexity of the underlying computational problem. This motivates the
\begin{de}[{\cite{AHS}}]
	A computational problem $(\Om,\Lambda,\Xi,\mathcal M)$ is said to have \emph{Solvability Complexity Index} $k$ if $k$ is the smallest integer for which there exists a tower of algorithms of height $k$ that computes $\Xi$.
	
	If a computational problem has solvability complexity index $k$, we write $\SCI(\Om,\Lambda,\Xi,\mathcal M)=k$. 
\end{de}
\begin{remark}
	In this article we are mainly interested in the spectral problem and will therefore write $\SCI(\Om,\Lambda)$ instead of $\SCI(\Om,\Lambda,\Xi,\mathcal M)$, where it is understood that $\Xi(T)=\sigma(T)$ and $\mathcal M$ is the set of closed subsets of $\C$ equipped with the Attouch-Wets metric $d_{\text{AW}}$ defined as
\begin{align*}
	d_{\text{AW}}(A,B) = \sum_{i=1}^\infty 2^{-i}\min\left\{ 1\,,\,\sup_{|x|<i}\left| \dist(x,A) - \dist(x,B) \right| \right\}.
\end{align*} 
(Note that if $A,B\subset\C$ are bounded, then $d_{\text{AW}}$ coincides with the Hausdorff distance.)
\end{remark}
\noindent In practice it is often important to have explicit estimates on the error $d\big(\Gamma_{n_1,\dots,n_k}(T),\Xi(T)\big)$ for all $T\in\Om$. It is straightforward to show, however, that such an estimate is impossible to obtain as soon as $\SCI(\Om,\Lambda,\Xi,\mathcal M)>1$ (cf. \cite[Thm. 6.1]{AHS}). Indeed, it is easy to see that if for a tower of algorithms $\Gamma_{n_1,\dots,n_k}$ there exist subsequences $n_1(m),\dots,n_k(m)$ such that $\Gamma_{n_1(m),\dots,n_k(m)}(T)<\f1m$ for all $T\in\Om$, then $\tilde\Gamma_m:=\Gamma_{n_1(m),\dots,n_k(m)}$ is in fact a tower of height 1 for $\Om$ and hence $\SCI(\Om)=1$.

For this reason, it is of particular interest to find classes $\Om$ of operators for which $\SCI(\Om,\Lambda,\sigma(\cdot))=1$ (with appropriately chosen $\Lambda$). The present article addresses precisely this question. In fact, we will show that for selfadjoint operators whose \emph{extended essential spectrum} (see \eqref{eq:extended_essential_spectrum}) is convex, we have $\SCI=1$. This is done by explicitly constructing a sequence of arithmetic algorithms which computes the spectrum of any such operator. The result is then extended to certain relatively compact perturbations of such operators. We stress that the new aspect of our work is to consider the \emph{shape of the essential spectrum} as a relevant criterion for reducing the numerical complexity of the spectral problem.
As an application of this approach, we will show that our results apply to non-selfadjoint Schr\"odinger operators with certain well behaved potentials.

The problem of determining the SCI for spectral problems has previously been studied in \cite{Hansen11,AHS} for operator in abstract Hilbert spaces, as well as for partial differential operators. Previous results include
\paragraph{Bounded operators:}
Let $\h,\,\Lambda$ be as in the example above Definition \ref{def:Algorithm}. It was shown in \cite[Th. 3.3, Th. 3.7]{AHS} that then 
\begin{align*}
	\SCI(\Om,\sigma(\cdot)) &= 3\quad\text{ if } \Om = L(\h) \\
	\SCI(\Om,\sigma(\cdot)) &= 2\quad\text{ if } \Om = \{T\in L(\h)\,|\,T\text{ selfadjoint}\} \\
	\SCI(\Om,\sigma(\cdot)) &= 1\quad\text{ if } \Om = \mathcal K(\h),
\end{align*}
where $\mathcal K(\h)$ denotes the set of compact operators. The last bound $\SCI(\mathcal K(\h),\sigma(\cdot)) = 1$ is related to the fact that compact operators can be approximated in operator norm by finite range operators.
\paragraph{Schr\"odinger operators:}
In \cite{AHS}, the SCI for the spectral problem of Schr\"odinger operators with complex valued potentials $V$ has been studied. It has been shown that if 
\begin{equation}\label{eq:AHS_Schroedinger}
 	\Om = \left\{-\Delta+V\,|\,V \text{ is sectorial and }|V(x)|\to\infty\text{ as }|x|\to\infty\right\},
 \end{equation}
 then $\SCI(\Om,\sigma(\cdot))=1$. The proof relies on the fact that operators as in \eqref{eq:AHS_Schroedinger} have compact resolvent. 
 
 In the case of \emph{bounded} potentials, one lacks compact resolvent and the situation is somewhat more difficult. It has been shown in \cite[Th. 4.2]{AHS} that if $\Om$ denotes the set of Schr\"odinegr operators on $\R^d$ with $V$ bounded and of bounded variation, then $\SCI(\Om,\sigma(\cdot))\leq 2$. It has since then been an open problem, whether without any additional information the SCI of this problem is equal to one or two.
 
 The SCI of certain unbounded operators in separable Hilbert spaces, whose matrix representation is banded, has been studied in \cite{Hansen11}.
 
 In this article, we will take a step towards closing this gap. We will prove that if $M>0$ and $\Om$ denotes the set of all Schr\"odinger operators $-\Delta+V$ with $\supp(V)\subset B_M(0)$ and $|\nabla V|\leq M$, then $\SCI(\Om,\sigma(\cdot))=1$ (for the precise statement, see Section \ref{sec:Schroedinger}). This is done by first proving two abstract theorems about the SCI of selfadjoint operators which are of independent interest. The main theorems of this article are Theorems \ref{th:mainth}, \ref{th:perturbation} and \ref{th:Schroedinger}.
 
 The question as to wether the assumption on the support of $V$ is essential for having $\SCI=1$ remains an interesting open problem and will be addressed in future work.

\section{Selfadjoint Operators}\label{sec:selfadjoint}

Let $\h$ be a separable Hilbert space and $\h_n\subset\h$ be a sequence of finite dimensional subspaces such that $\h_n\subset\h_{n+1}$ for all $n\in\N$ and $P_n\xrightarrow{s}I$, where $P_n$ denotes the orthogonal projection onto $\h_n$. Define

\begin{align}\label{eq:Omega}
	\Omega_1:=\Bigg\{T:\dom(T)\to\h \;\Bigg|\; \parbox{4.4cm}{$T \text{ selfadjoint, }\widehat\sigma_{e}(T)\text{ convex}\\[1mm]
	 \text{and }\bigcup_{n\in\N}\h_n\text{ is a core of }T$} \Bigg\},
\end{align}
where 
\begin{equation}\label{eq:extended_essential_spectrum}
\widehat\sigma_e(T)=\sigma_{e2}(T)\cup\begin{cases}
	\{+\infty\}, &\text{if }T\text{ unbounded above}\\
	\{-\infty\}, &\text{if }T\text{ unbounded below}
\end{cases}
\end{equation}
and
\begin{align*}
	\sigma_{e2}(T)=\{\lambda\in \C\,|\, \exists (x_k)\subset\dom(T) : \|x_k\|=1\,\forall k,\, x_k\rightharpoonup 0,\, \|(T-\lambda)x_k\|\to 0\}.
\end{align*}
Furthermore, for each $n\in\N$, let $\{e_1^{(n)},\dots,e_{k_n}^{(n)}\}$ be an orthonormal basis of $\h_n$ and define 
\begin{align}\label{eq:def_Lambda_1}
	\Lambda_1 := \big\{ f_{i,j,n} \,|\, 1\leq i,j\leq k_n,\,n\in\N \big\},
\end{align}
where $f_{i,j,n}:T\mapsto \big\langle Te_i^{(n)},e_j^{(n)}\big\rangle$ are the evaluation functions producing the $(i,j)$th matrix elements.
This is the set of information accessible to the algorithm.

\begin{theorem}\label{th:mainth}
	We have $\SCI(\Om_1,\Lambda_1,\sigma(\cdot))=1$.
\end{theorem}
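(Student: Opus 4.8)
The plan is to show that the \emph{finite section method} already produces a height‑one tower, the whole point being that convexity of $\widehat\sigma_e(T)$ excludes spectral pollution. Concretely, for $T\in\Om_1$ and $n\in\N$ let $A_n(T)$ be the $k_n\times k_n$ matrix with entries $f_{i,j,n}(T)=\langle Te_i^{(n)},e_j^{(n)}\rangle$, $1\le i,j\le k_n$. Since $\bigcup_m\h_m$ is a core of $T$ we have $\h_n\subset\dom(T)$, so these entries are defined, $A_n(T)$ is Hermitian (as $T$ is symmetric on $\dom(T)$), and it represents the compression $P_nTP_n|_{\h_n}$ in the basis $\{e_i^{(n)}\}$. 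The assignment $T\mapsto A_n(T)$ uses only finitely many functions from $\Lambda_1$. I would then build $\Gamma_n$ so that $\Gamma_n(T)$ is a finite subset of $\R$ with $d_{\mathrm{AW}}(\Gamma_n(T),\sigma(A_n(T)))<1/n$, using that the spectrum of a finite Hermitian matrix can be located to any prescribed accuracy by finitely many arithmetic operations on its entries (this is the one genuinely bookkeeping‑heavy point — making precise that ``output $\sigma(A_n(T))$'' can be replaced by a bona fide arithmetic algorithm in the sense of Definition~\ref{def:Algorithm}). It then suffices to prove $\sigma(A_n(T))\to\sigma(T)$ in $d_{\mathrm{AW}}$ for every $T\in\Om_1$. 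That $\SCI\ge1$ is clear, since $\Om_1$ contains operators agreeing on any prescribed finite block of matrix elements but with different spectra, so no single algorithm can compute $\sigma$.

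Next I would record the structure of $T\in\Om_1$. Convexity of $\widehat\sigma_e(T)$ in $[-\infty,+\infty]$ forces $\sigma_{e2}(T)$ to be a (possibly empty, possibly degenerate) closed interval $[\alpha,\beta]$ with $-\infty\le\alpha\le\beta\le+\infty$; moreover if $T$ is unbounded below then $-\infty\in\widehat\sigma_e(T)$, so convexity forces $\alpha=-\infty$. Hence $T$ is bounded below iff $\alpha>-\infty$ and bounded above iff $\beta<+\infty$, and $\sigma(T)\setminus[\alpha,\beta]$ consists of isolated eigenvalues of finite multiplicity in $(-\infty,\alpha)\cup(\beta,+\infty)$, accumulating only at finite endpoints among $\alpha,\beta$. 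For the ``no loss'' inclusion $\sigma(T)\subset\liminf_n\sigma(A_n(T))$: given $\lambda\in\sigma(T)$ and $\eps>0$, pick $x\in\dom(T)$, $\|x\|=1$, with $\|(T-\lambda)x\|<\eps$ (an eigenvector if $\lambda$ is discrete, a Weyl‑sequence element if $\lambda\in\sigma_{e2}(T)$), and approximate $x$ in the graph norm by $x_n\in\h_n$ (possible since $\bigcup_m\h_m$ is a core and the $\h_m$ increase). Because $x_n\in\h_n$ one has $(A_n(T)-\lambda)x_n=P_n(T-\lambda)x_n$, so $\|(A_n(T)-\lambda)x_n\|\le\|(T-\lambda)x_n\|\to\|(T-\lambda)x\|<\eps$ while $\|x_n\|\to1$; Hermiticity of $A_n(T)$ then gives $\dist(\lambda,\sigma(A_n(T)))\le\|(A_n(T)-\lambda)x_n\|/\|x_n\|$, and letting $\eps\downarrow0$ yields $\dist(\lambda,\sigma(A_n(T)))\to0$.

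The crux is the ``no pollution'' inclusion $\limsup_n\sigma(A_n(T))\subset\sigma(T)$, and this is where convexity is indispensable. Suppose $\mu_n\in\sigma(A_n(T))$, $\mu_n\to\mu\in\R$. If $\alpha\le\mu\le\beta$ with $\alpha,\beta$ finite then $\mu\in[\alpha,\beta]=\sigma_{e2}(T)\subset\sigma(T)$, and the cases where $\mu$ would sit on an infinite end are vacuous; so assume $\mu<\alpha$ (the case $\mu>\beta$ being symmetric), whence $\alpha>-\infty$ and $T$ is bounded below. Then the Rayleigh--Ritz values $\lambda_k(A_n(T))=\min_{\dim V=k,\,V\subset\h_n}\max_{0\ne x\in V}\langle Tx,x\rangle/\|x\|^2$ are nonincreasing in $n$ and, since a core of $T$ is a form core when $T$ is bounded below, converge as $n\to\infty$ to the min--max values $\lambda_k(T)$ of $T$. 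By the min--max principle $\lambda_k(T)=\alpha$ once $k$ exceeds the number $N_0$ of eigenvalues of $T$ below $\alpha$, so $\lambda_k(A_n(T))\ge\lambda_k(T)\ge\alpha$ for $k>N_0$. Writing $\mu_n=\lambda_{k_n}(A_n(T))$ and using $\mu_n<\alpha$ for large $n$, we get $k_n\le N_0$; passing to a subsequence with $k_n\equiv k$ constant, $\mu=\lim_n\lambda_k(A_n(T))=\lambda_k(T)<\alpha$, an eigenvalue of $T$, so $\mu\in\sigma(T)$.

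Finally I would upgrade the two inclusions to convergence in $d_{\mathrm{AW}}$: for fixed $R>0$, compactness of $\sigma(T)\cap[-R-1,R+1]$ together with the ``no loss'' part gives $\sup_{x\in\sigma(T),\,|x|\le R}\dist(x,\sigma(A_n(T)))\to0$, while if $\sup_{x\in\sigma(A_n(T)),\,|x|\le R}\dist(x,\sigma(T))$ did not tend to $0$ a subsequence of such points would converge to some $\mu\in[-R,R]\setminus\sigma(T)$, contradicting ``no pollution''; hence $d_{\mathrm{AW}}(\sigma(A_n(T)),\sigma(T))\to0$. I expect the main obstacle to be precisely the no‑pollution step: it is the only place the hypothesis is used, and the argument rests on the observation that convexity of $\widehat\sigma_e(T)$ makes $\sigma_{e2}(T)$ gap‑free, so the only regions where spurious eigenvalues could appear are the unbounded components of $\R\setminus\sigma_{e2}(T)$, where the one‑sided Rayleigh--Ritz bound $\lambda_k(A_n(T))\ge\lambda_k(T)$ (resp.\ $\le$) pins them down. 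The secondary difficulty is the technical reduction in the first paragraph to an algorithm performing only finitely many arithmetic operations.
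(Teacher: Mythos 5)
Your strategy is sound, and it reaches the result by a genuinely different route in the decisive step. The paper also works with the finite sections $T_n=P_nT|_{\h_n}$, but its algorithm does not output approximate eigenvalues of $T_n$; it tests $s(T_n-\lambda)\le\frac1n$ for $\lambda$ in the grid $G_n=\frac1n(\mathbb Z+\mathrm i\mathbb Z)\cap B_n(0)$, which Lemma \ref{lemma:finite_testing} turns into finitely many arithmetic operations (this settles the ``bookkeeping-heavy'' point you defer; your eigenvalue-location step can be made rigorous the same way, e.g.\ by positive-definiteness tests on a grid). For convergence the paper proves strong resolvent convergence $T_n\to T$ from the core hypothesis, gets spectral inclusion from an adaptation of \cite[Th. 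VIII.24]{RS}, and excludes pollution through the abstract machinery of \cite{B18,BMC}: pseudospectral pollution is confined to the limiting essential spectrum and the limiting $\eps$-near spectra, these sets lie in the essential numerical range $W_e(T)$, and convexity enters only through $W_e(T)=\operatorname{conv}(\widehat\sigma_e(T))\setminus\{\pm\infty\}=\sigma_e(T)$. Your replacement of all this by monotone Rayleigh--Ritz values and the min--max principle is more elementary and self-contained for the selfadjoint case; what the paper's formulation buys is reusability -- the same grid/singular-value algorithm and the $W_e$-based pollution argument carry over essentially verbatim to the non-selfadjoint relatively compact perturbations of Section \ref{sec:Perturbations}, where a purely variational argument is unavailable.

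Two boundary cases in your no-pollution step need patching, though both are fixable with the tools you already use. First, the number $N_0$ of eigenvalues of $T$ below $\alpha$ may be infinite (they may accumulate at $\alpha$ from below), so ``$\mu_n<\alpha$ for large $n$'' does not bound $k_n$; instead use that $\mu<\alpha-\delta$ for some $\delta>0$, and that only finitely many min--max values $\lambda_k(T)$ lie below $\alpha-\delta$, to get $k_n$ bounded before extracting a constant subsequence. Second, $\Om_1$ contains operators with $\sigma_{e2}(T)=\emptyset$ that are unbounded below and bounded above (e.g.\ a diagonal operator with eigenvalues $0,-1,-2,\dots$, for which $\widehat\sigma_e(T)=\{-\infty\}$ is convex); there your dichotomy ``$T$ bounded below iff $\alpha>-\infty$'' fails, and the Rayleigh--Ritz bound from below is vacuous. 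The remedy is to run the symmetric argument on $-T$ in that case (and to note that if $T$ is unbounded on both sides, convexity forces $\sigma_{e2}(T)=\R$, so no pollution can occur). With these repairs, and the arithmetic-algorithm formalisation made explicit, your proof is complete.
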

Note that Theorem \ref{th:mainth} in particular applies to bounded selfadjoint operators with convex essential spectrum. In this sense, Theorem \ref{th:mainth} can be viewed as an extension of \cite[Th. 3.7]{AHS}, where it was shown that $\SCI=1$ for the set of all \emph{compact} operators (which naturally satisfy $\sigma_e(T)\subset\{0\}$).

\subsection{Proof of Theorem \ref{th:mainth}}

Define the truncated operator
\begin{equation}\label{eq:Tndef}
	T_n:=P_nT|_{\h_n}.
\end{equation}
This operator can be represented by a finite dimensional (square) matrix with elements $(T_n)_{ij}=\big\langle Te_i^{(n)},e_j^{(n)}\big\rangle$.  Moreover, let $G_n:=\f1n(\mathbb Z+i\mathbb Z)\cap B_n(0)\subset\C.$ 
\begin{lemma}\label{lemma:finite_testing}
	Let $\lambda\in G_n$ and denote by $s(\cdot)$ the smallest singular value of a matrix. Then
	\begin{enumi}
		\item For all $n$ and $\lambda$, we have $s(T_n-\lambda)=\|(T_n-\lambda)^{-1}\|^{-1}_{L(\h_n)}$.
		\item For any $q>0$, testing whether $s(T_n-\lambda)>q$ requires only finitely many arithmetic operations on the matrix elements of $T_n$.
	\end{enumi}
	with the convention that $\|(T_n-\lambda)^{-1}\|^{-1}=0$ for $\lambda\in\sigma(T_n)$.
\end{lemma}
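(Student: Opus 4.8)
The plan is to treat the two parts separately; both reduce to finite-dimensional linear algebra, and the only point needing care is the computability claim in (ii).

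\medskip

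\noindent\emph{Part (i).} I would argue directly from the singular value decomposition. If $\lambda\in\sigma(T_n)$ then $T_n-\lambda$ is singular, so its smallest singular value is $0$ and the identity holds by the stated convention. If $\lambda\notin\sigma(T_n)$, write $T_n-\lambda=U\Sigma V^*$ with $\Sigma=\operatorname{diag}(\sigma_1\ge\dots\ge\sigma_{k_n}>0)$; then $(T_n-\lambda)^{-1}=V\Sigma^{-1}U^*$, so the operator norm $\|(T_n-\lambda)^{-1}\|_{L(\h_n)}$ is the largest singular value of $(T_n-\lambda)^{-1}$, namely $\sigma_{k_n}^{-1}=s(T_n-\lambda)^{-1}$. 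Taking reciprocals yields (i). (Equivalently, one may invoke $s(A)=\inf_{x\neq 0}\|Ax\|/\|x\|$ together with $\|A^{-1}\|=\sup_{x\neq 0}\|x\|/\|Ax\|$.)

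\medskip

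\noindent\emph{Part (ii).} The key observation is that one should not attempt to compute $s(T_n-\lambda)$ itself --- that would require a convergent eigenvalue iteration, hence infinitely many operations --- but instead to test the inequality $s(T_n-\lambda)>q$, which is a positive-definiteness test. Indeed $s(T_n-\lambda)^2$ is the smallest eigenvalue of the positive semidefinite Hermitian matrix $(T_n-\lambda)^*(T_n-\lambda)$, so, since $q>0$,
\[
	s(T_n-\lambda)>q \quad\Longleftrightarrow\quad B:=(T_n-\lambda)^*(T_n-\lambda)-q^2\,\id_{\h_n}\ \text{ is positive definite}.
\]
Next I would note that, since $\lambda\in G_n$ has rational real and imaginary parts and $q$ is a fixed parameter of the problem, every entry of $B$ is obtained from the matrix elements $(T_n)_{ij}=\langle Te_i^{(n)},e_j^{(n)}\rangle$ (and their complex conjugates) by finitely many arithmetic operations --- splitting a complex operation into its real and imaginary parts if one insists on staying within $\R$. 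Finally, testing positive definiteness of the Hermitian matrix $B$ is carried out via Sylvester's criterion: $B$ is positive definite if and only if all of its $k_n$ leading principal minors are strictly positive. Each such minor is the determinant of a square submatrix of $B$, which can be evaluated by cofactor expansion using only finitely many additions and multiplications (and no divisions), after which a single sign comparison against $0$ is needed. Performing these $k_n$ determinant evaluations and comparisons decides whether $s(T_n-\lambda)>q$, which is the claim.

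\medskip

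I do not expect a genuine obstacle here: (i) is textbook, and the content of (ii) is merely that the relevant condition is semialgebraic in the matrix entries --- equivalent, via Sylvester, to finitely many polynomial inequalities --- hence checkable exactly with finitely many field operations and comparisons. The one thing to be mildly careful about is to phrase the test so that it invokes neither square roots nor a convergent iteration; using the leading principal minors of $(T_n-\lambda)^*(T_n-\lambda)-q^2\,\id_{\h_n}$ rather than, say, a Cholesky factorisation achieves exactly this.
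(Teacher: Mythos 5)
Your proof is correct and follows essentially the same route as the paper: part (i) is the standard singular-value/operator-norm identity (which the paper simply cites from \cite{Hansen11}), and part (ii) rests on exactly the same reduction the paper uses, namely that $s(T_n-\lambda)>q$ is equivalent to positive definiteness of $(T_n-\lambda)^*(T_n-\lambda)-q^2 I$. Your Sylvester-criterion argument just fills in the finite-operation details that the paper delegates to \cite[Prop.~10.1]{AHS}.
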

\begin{proof}
	Part (i) was proved in \cite{Hansen11}, while part (ii) follows by noting that $s(T_n-\lambda)>q$ is equivalent to $(T_n-\lambda)^*(T_n-\lambda)-q^2I$ being positive definite; see \cite[Prop. 10.1]{AHS} for a full proof.
\end{proof}
For $n\in\N$ we define a map $\Gamma_n^{(1)}:\Om_1\to\{\text{closed subsets of }\C\}$ by
\begin{align*}
	\Gamma_n^{(1)}(T):=\left\{\lambda\in G_n\,\Big|\,s(T_n-\lambda)\leq\f1n\right\}.
\end{align*}
Then, by the above lemma, each $\Gamma_n^{(1)}$ is an arithmetic tower of height one in the sense of Definition \ref{def:Tower}. Clearly, $\Gamma_n^{(1)}(T)\subset \sigma_{\f1n}(T_n)$ for all $n$ (where $\sigma_{\f1n}(\cdot)$ denotes the $\f1n$-pseudospectrum).

Next we prove a version of the second resolvent identity for our operator approximation.

\begin{lemma}\label{lemma:strong_on_domain}
	Let $T:\dom(T)\to\h$ be selfadjoint and $T_n$ be defined as in \eqref{eq:Tndef}. Then each $T_n$ is selfadjoint on $\h_n$ and for any $u\in\h$ the implication (ii)$\Rightarrow$(i) holds, where
 	\begin{enumi}
		\item $(T_n-i)^{-1}P_nu\to (T-i)^{-1}u$,
		\item $\big(P_nT-T_nP_n\big)(T-i)^{-1}u\to 0$.
	\end{enumi} 
\end{lemma}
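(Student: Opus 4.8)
The first assertion (each $T_n$ is selfadjoint on $\h_n$) is immediate: $T_n = P_nT|_{\h_n}$ with $T$ symmetric, so for $u,v\in\h_n$ we have $\langle T_nu,v\rangle = \langle P_nTu,v\rangle = \langle Tu,v\rangle = \langle u,Tv\rangle = \langle u,P_nTv\rangle = \langle u,T_nv\rangle$, and a symmetric operator on a finite dimensional space is selfadjoint. So the content is the implication (ii)$\Rightarrow$(i).

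The plan is to run the standard second-resolvent-identity manipulation, being careful that $T$ is unbounded so all compositions must be read on the correct domains. Write $R=(T-i)^{-1}\in L(\h)$ and $R_n=(T_n-i)^{-1}\in L(\h_n)$; both exist and are bounded by $1$ since $T$, $T_n$ are selfadjoint. Fix $u\in\h$. The first step is the algebraic identity
\begin{align*}
	R_nP_nu - Ru = R_nP_n(T-i)Ru - R_n(T_n-i)P_nRu = R_n\big(P_nT - T_nP_n\big)Ru,
\end{align*}
which is legitimate because $Ru\in\dom(T)$, so $TRu$, $P_nTRu$ and $T_nP_nRu$ all make sense, and $(T_n-i)P_nRu = T_nP_nRu - iP_nRu$ lies in $\h_n$ with $R_n$ undoing it. The second step is to take norms: $\|R_n\|_{L(\h_n)}\le 1$ uniformly in $n$, so
\begin{align*}
	\|R_nP_nu - Ru\| \le \big\|\big(P_nT - T_nP_n\big)Ru\big\|,
\end{align*}
and the right-hand side tends to $0$ precisely by hypothesis (ii) applied to the vector $u$ (note $Ru = (T-i)^{-1}u$, matching the form in (ii)). This gives (i) and completes the proof.

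The only genuinely delicate point — and the step I would state carefully rather than wave through — is the justification of the operator identity: one must check that $P_n(T-i)Ru = P_nu$ (true since $(T-i)Ru=u$) and that $(T_n-i)P_nRu$ is the vector to which $R_n$ is applied, i.e. that $P_nRu\in\h_n = \dom(T_n)$ and $T_nP_nRu = P_nTP_nRu$ is well-defined, which holds because $\h_n$ is finite-dimensional and $P_nRu\in\h_n$. There is no analytic subtlety beyond the uniform bound $\|R_n\|\le1$; the lemma is essentially bookkeeping that the truncation $T_n$ interacts with the resolvent exactly as in the bounded case, with the error term isolated into the quantity appearing in (ii). I do not expect to need convexity of $\widehat\sigma_e(T)$ or that $\bigcup_n\h_n$ is a core here — those enter later, when one shows hypothesis (ii) actually holds and upgrades (i) to convergence of spectra.
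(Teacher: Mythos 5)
Your reduction to the second resolvent identity is the right idea and matches the paper's strategy, but the identity you write down is false, and so is the resulting estimate. You compute
\begin{align*}
	R_nP_nu - Ru = R_nP_n(T-i)Ru - R_n(T_n-i)P_nRu ,
\end{align*}
and the first term is indeed $R_nP_nu$; however the second term equals $R_n(T_n-i)\,P_nRu = P_nRu$, \emph{not} $Ru$, because $R_n(T_n-i)$ is the identity only on $\h_n$ and you have already projected $Ru$ down to $P_nRu$. The correct identity carries an extra error term,
\begin{align*}
	R_nP_nu - Ru = R_n\big(P_nT - T_nP_n\big)Ru \;-\; (I-P_n)Ru ,
\end{align*}
and consequently the bound must read $\|R_nP_nu - Ru\| \le \|(P_nT - T_nP_n)Ru\| + \|(I-P_n)Ru\|$. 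Your stated inequality $\|R_nP_nu - Ru\| \le \|(P_nT - T_nP_n)Ru\|$ fails already for $T=0$: there $T_n=0$, $R=iI$, the right-hand side vanishes, yet the left-hand side is $\|(I-P_n)u\|>0$ for $u\notin\h_n$.

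The gap is easily repaired and the conclusion of the lemma is unaffected: the missing term $(I-P_n)(T-i)^{-1}u$ tends to $0$ because $P_n\xrightarrow{s}I$ is part of the standing assumptions of Section \ref{sec:selfadjoint}, and the other term tends to $0$ by hypothesis (ii). This is exactly the paper's proof, which keeps the term $-(I-P_n)(T-i)^{-1}$ explicitly in the decomposition and invokes strong convergence of $P_n$ for it. Your treatment of the selfadjointness of $T_n$ and of the domain bookkeeping (why $(T_n-i)P_nRu$ and $P_nTRu$ make sense) is fine; what you must add is the projection error term and the appeal to $P_n\to I$ strongly, which your write-up explicitly (and incorrectly) claims is not needed.
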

\begin{proof}
	We start by showing that each $T_n$ is selfadjoint. First note that each $T_n$ is automatically bounded, since the $\h_n$ are finite dimensional. Now let $x,y\in\h_n$. Then we have
	\begin{equation}\label{eq:T_n_selfadjoint}
	\begin{split}
		\langle T_n x,y\rangle &= \langle P_n T x,y\rangle
		= \langle Tx,P_n y\rangle
		= \langle Tx , y\rangle\\
		&= \langle x , Ty\rangle
		= \langle P_nx,Ty\rangle
		= \langle x,P_nTy\rangle
		= \langle x,T_ny\rangle.
		\end{split}
	\end{equation}
	and hence $T_n$ is selfadjoint. To prove strong resolvent convergence, we proceed as follows. Since each $T_n$ is selfadjoint, $T_n-i$ is invertible with $\|(T_n-i)^{-1}\|=1$ for all $n$. Now note that on $\h$
	\begin{align*}
		(T_n-i)^{-1}P_n - (T-i)^{-1} &= (T_n-i)^{-1}P_n(T-i)(T-i)^{-1}\\ 
		&\qquad- (T_n-i)^{-1}(T_n-i)P_n(T-i)^{-1} \\
		&\qquad\qquad- (I-P_n)(T-i)^{-1}\\[2mm]
		&= (T_n-i)^{-1}\big(P_n(T-i)-(T_n-i)P_n\big)(T-i)^{-1} \\
		&\qquad- (I-P_n)(T-i)^{-1}\\[2mm]
		&= (T_n-i)^{-1}\big(P_nT-T_nP_n\big)(T-i)^{-1} \\
		&\qquad- (I-P_n)(T-i)^{-1}.
	\end{align*}
	Hence, we have for $u\in\h$
	\begin{align*}
		\left\|\left((T_n-i)^{-1}P_n - (T-i)^{-1}\right)u\right\| &\leq \left\|(T_n-i)^{-1}\right\| \left\|(P_nT-T_nP_n)(T-i)^{-1}u\right\| \\
		&\qquad+ \left\|(I-P_n)(T-i)^{-1}u\right\|\\[2mm]
		&= \left\|(P_nT-T_nP_n)(T-i)^{-1}u\right\| \\
		&\qquad+ \left\|(I-P_n)(T-i)^{-1}u\right\|
	\end{align*}
	The second term on the right hand side converges to 0 since $P_n\to I$ strongly, while the first term converges to 0 by assumption. 

\end{proof}

\begin{prop}\label{lemma:strong_if_T^2}
	If $T$ is selfadjoint, $\bigcup_n\h_n$ form a core of $T$, and $T_n$ is defined as in \eqref{eq:Tndef}, then $T_n\to T$ in strong resolvent sense.
\end{prop}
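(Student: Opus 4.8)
The plan is to obtain strong resolvent convergence from the \emph{pointwise} convergence $P_nT\psi\to T\psi$ on the core $\bigcup_n\h_n$, feeding this into the classical criterion for strong resolvent convergence rather than trying to verify condition (ii) of Lemma~\ref{lemma:strong_on_domain} directly. As a preliminary step I would pass from the operators $T_n$, which live on the finite-dimensional subspaces $\h_n$, to the operators $\widehat T_n:=T_nP_n$ acting on all of $\h$ (i.e.\ $T_n$ on $\h_n$ and $0$ on $\h_n^{\perp}$). By the computation \eqref{eq:T_n_selfadjoint} each $T_n$ is self-adjoint on $\h_n$, so each $\widehat T_n$ is a bounded self-adjoint operator on $\h$, and a direct computation gives
\[
	(\widehat T_n-i)^{-1}=(T_n-i)^{-1}P_n+i(I-P_n).
\]
Since $(I-P_n)\to 0$ strongly, $\widehat T_n\to T$ in strong resolvent sense is equivalent to $(T_n-i)^{-1}P_n\to (T-i)^{-1}$ strongly, i.e.\ to the conclusion of the proposition (recalling that for self-adjoint operators strong resolvent convergence at one point of $\C\setminus\R$ implies it at every such point). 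Hence it suffices to prove $\widehat T_n\to T$ in strong resolvent sense.

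For this I would invoke the standard criterion: if $D$ is a common core for the self-adjoint operators $T$ and all the $\widehat T_n$, and $\widehat T_n\psi\to T\psi$ for every $\psi\in D$, then $\widehat T_n\to T$ in strong resolvent sense. I take $D:=\bigcup_n\h_n$. This is a core of $T$ by hypothesis, and it is dense in $\h$ (since $P_n\to I$ strongly), hence a core for each of the \emph{bounded} operators $\widehat T_n$; so $D$ is a common core. To check the pointwise convergence, fix $\psi\in D$ and pick $m$ with $\psi\in\h_m$; then $P_n\psi=\psi$ for all $n\ge m$, so $\widehat T_n\psi=T_n\psi=P_nT\psi$ for $n\ge m$, and since $T\psi$ is a fixed vector of $\h$ and $P_n\to I$ strongly we conclude $\widehat T_n\psi=P_nT\psi\to T\psi$. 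This finishes the proof.

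The reason I would route the argument through the abstract criterion rather than verify condition (ii) of Lemma~\ref{lemma:strong_on_domain} head on is that $\|T_n\|$ need not stay bounded as $n\to\infty$: the operators $(P_nT-T_nP_n)(T-i)^{-1}=P_nT(I-P_n)(T-i)^{-1}$ are bounded for each fixed $n$ but not uniformly, which blocks the naive ``convergence on a dense set plus uniform boundedness'' strategy and would force one to control quantities like $\|TP_n(T-i)^{-1}\|$ for which there is no a priori bound. The abstract criterion avoids this because it only demands convergence on a core, and on $\bigcup_n\h_n$ the truncation is eventually exact up to the strongly convergent factor $P_n$; I expect the genuine content of the statement to be precisely this, namely that the core hypothesis is exactly what makes the criterion applicable. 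The only remaining point is the (routine) bookkeeping translating between the subspace picture $T_n$ on $\h_n$ and the full-space picture $\widehat T_n$ on $\h$.
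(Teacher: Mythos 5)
Your proof is correct, but it follows a genuinely different route from the paper. You extend the compressions $T_n$ by zero to bounded selfadjoint operators $\widehat T_n=T_nP_n$ on all of $\h$, observe $(\widehat T_n-i)^{-1}=(T_n-i)^{-1}P_n+i(I-P_n)$, and then invoke the classical criterion (Reed--Simon, Thm.\ VIII.25(a)): convergence $\widehat T_n\psi\to T\psi$ on a common core implies strong resolvent convergence. The core hypothesis enters exactly where you put it: for $\psi\in\h_m$ the truncation is eventually exact, $\widehat T_n\psi=P_nT\psi\to T\psi$. The paper instead argues by hand: it verifies condition (ii) of Lemma \ref{lemma:strong_on_domain} not for all $u$ but only for $u$ in the dense set $(T-i)\big(\bigcup_n\h_n\big)$ (where $(I-P_n)(T-i)^{-1}u$ vanishes eventually), and then extends the resulting resolvent convergence to all of $\h$ by an $\eps$-argument using the uniform bounds $\|(T_n-i)^{-1}\|\le 1$, $\|(T-i)^{-1}\|\le 1$; so your worry about the unbounded commutators $P_nT(I-P_n)(T-i)^{-1}$ only affects the truly naive strategy, not the paper's, since the density argument there is run at the resolvent level. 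What each approach buys: yours is shorter and black-boxes the standard common-core theorem (whose proof is essentially the same second-resolvent-identity computation the paper performs), at the cost of the small bookkeeping step with $\widehat T_n$ and an external citation; the paper's version is self-contained and produces Lemma \ref{lemma:strong_on_domain} as a reusable ingredient, which is convenient later (e.g.\ in Lemma \ref{lemma:H_conv_strongly}). Your side remark that strong resolvent convergence at one non-real point propagates to all non-real points is correct for selfadjoint operators and is indeed what is implicitly used when the paper later applies the Proposition at points $z$ with $|\im z|$ large.
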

\begin{proof}
	First note that the condition $(P_nT-T_nP_n)(T-i)^{-1}u\to 0$ from Lemma \ref{lemma:strong_on_domain} is equivalent to $P_nT(I-P_n)(T-i)^{-1}u\to 0$. Now, let $u\in (T-i)\big(\bigcup_n\h_n\big)$ (the image of $\bigcup_n\h_n$ under $T-i$). Then we have
	\begin{align*}
		P_nT(I-P_n)(T-i)^{-1}u\to 0.
	\end{align*}
	Indeed, since $(T-i)^{-1}u\in\bigcup_n\h_n$, we have $(I-P_n)(T-i)^{-1}u=0$ for $n$ large enough, and hence the sequence is identically zero for almost all $n$.
	
	Now from Lemma \ref{lemma:strong_on_domain} we conclude that 
	\begin{align*}
		\left[ (T_n-i)^{-1}P_n - (T-i)^{-1} \right]u\to 0
	\end{align*}
	for all $u\in (T-i)(\bigcup_n\h_n)$. But since $\bigcup_n\h_n$ is a core for $T$, the set $(T-i)\big(\bigcup_n\h_n\big)$ is dense in $\operatorname{ran}(T-i)=\h$. So if $w\in\h$ is arbitrary and $\eps>0$, we can choose $u\in (T-i)(\bigcup_n\h_n)$ such that $\|w-u\|<\eps$ and obtain
	\begin{align*}
		\left\|\left[ (T_n-i)^{-1}P_n - (T-i)^{-1} \right]w\right\| &\leq \left\|\left[ (T_n-i)^{-1}P_n - (T-i)^{-1} \right](w-u)\right\|\\
		&\qquad +\left\|\left[ (T_n-i)^{-1}P_n - (T-i)^{-1} \right]u\right\|\\
		&\leq \left(\left\| (T_n-i)^{-1}\right\|+\left\| (T-i)^{-1} \right\|\right) \|w-u\|\\
		&\qquad +\left\|\left[ (T_n-i)^{-1}P_n - (T-i)^{-1} \right]u\right\|\\
		&\leq 2\eps +\left\|\left[ (T_n-i)^{-1}P_n - (T-i)^{-1} \right]u\right\|\\
		&\to 2\eps
	\end{align*}
	from which we conclude that $ (T_n-i)^{-1}P_n \to (T-i)^{-1}$ strongly on $\h$.
\end{proof}

The following definitions from \cite{BMC}, which are related to the essential spectrum, will be used frequently in the sequel. The \emph{limiting essential spectrum:}
\begin{align*}
	\limess &:= \{\lambda\in \C\,|\, \exists x_k\in\dom(T_{n_k}) : \|x_k\|=1\,\forall k,\, x_k\rightharpoonup 0,\, \|(T_{n_k}-\lambda)x_k\|\to 0\},
\end{align*}
the limiting $\eps$-near spectrum:
\begin{align*}
	\limpseu{\eps} &:= \{\lambda\in \C\,|\, \exists x_k\in\dom(T_{n_k}) : \|x_k\|=1\,\forall k,\, x_k\rightharpoonup 0,\, \|(T_{n_k}-\lambda)x_k\|\to \eps\},
\end{align*}
the essential numerical range
\begin{align*}
	W_e(T) &:= \{\lambda\in \C\,|\, \exists x_k\in\dom(T) : \|x_k\|=1\,\forall k,\, x_k\rightharpoonup 0,\, \langle Tx_k,x_k\rangle \to \lambda \}
\end{align*}
and the limiting essential numerical range
\begin{align*}
	W_e\Tn &:= \{\lambda\in \C\,|\, \exists x_k\in\dom(T_{n_k}) : \|x_k\|=1\,\forall k,\, x_k\rightharpoonup 0,\, \langle T_{n_k}x_k,x_k\rangle \to \lambda \}.
\end{align*}
The essential limiting spectrum was originally introduced in \cite{BBL} in the context of Galerkin approximation and later adapted to a more general framework in \cite{B18}, where the set $\limpseu{\eps}$ was introduced. The essential numerical range was originally introduced by Stampfli and Williams in \cite{SW} for bounded operators and recently extended to unbounded operators in \cite{BMC}. It was shown there that the essential numerical range is a convenient tool when studying spectral and pseudospectral pollution of operator approximations. This fact will prove very useful to our purpose as we shall see in the following.

\begin{lemma}\label{lem:LambdaSigma}
	\begin{enumi}
	\item For any closed, densely defined operator $T$ on $\h$ one has
	\begin{align*}
		\bigcap_{\eps>0}	\bigcup_{\delta\in(0,\eps]} \limpseu{\delta}\;\subset\; \limess.
	\end{align*}
	\item The above inclusion holds, if $\bigcap_{\eps>0}	\bigcup_{\delta\in(0,\eps]}$ is replaced by $\bigcap_{k}	\bigcup_{\delta\in(0,\eps_k]}$ for any sequence $(\eps_k)$ with $\eps_k\to 0$.
	\end{enumi}
\end{lemma}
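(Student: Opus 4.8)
The plan is to prove (i) directly by a diagonal extraction, and then to observe that (ii) follows formally from (i). For the reduction: suppose $(\eps_k)$ is a sequence with $\eps_k\to 0$ and $\lambda\in\bigcap_{k}\bigcup_{\delta\in(0,\eps_k]}\limpseu{\delta}$. Given $\eps>0$, choose $k$ with $\eps_k\le\eps$; then $(0,\eps_k]\subset(0,\eps]$, so $\lambda\in\bigcup_{\delta\in(0,\eps]}\limpseu{\delta}$. As $\eps>0$ was arbitrary, $\lambda$ lies in the left-hand set of (i), and (i) gives $\lambda\in\limess$. So it suffices to prove (i).

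For (i), fix $\lambda$ in the left-hand set and fix an orthonormal basis $\{e_i\}_{i\in\N}$ of the separable space $\h$. Applying the hypothesis with $\eps=1/m$ produces, for each $m\in\N$, some $\delta_m\in(0,1/m]$ with $\lambda\in\limpseu{\delta_m}$; unwinding the definition, there are a strictly increasing index sequence $(n^{(m)}_k)_k\subset\N$ and unit vectors $x^{(m)}_k\in\dom(T_{n^{(m)}_k})$ with $x^{(m)}_k\rightharpoonup 0$ and $\|(T_{n^{(m)}_k}-\lambda)x^{(m)}_k\|\to\delta_m$ as $k\to\infty$. The idea is now to select one vector per $m$ diagonally. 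Choose $k(m)$ recursively, at each stage large enough, so that — writing $\nu_m:=n^{(m)}_{k(m)}$ and $y_m:=x^{(m)}_{k(m)}$ — one has $\nu_m>\nu_{m-1}$, $|\langle y_m,e_i\rangle|<1/m$ for all $i\le m$, and $\|(T_{\nu_m}-\lambda)y_m\|<\delta_m+1/m\le 2/m$; each of these three requirements can be met for $k(m)$ large because $n^{(m)}_k\to\infty$, $x^{(m)}_k\rightharpoonup 0$ and $\|(T_{n^{(m)}_k}-\lambda)x^{(m)}_k\|\to\delta_m$ as $k\to\infty$. Then $\|y_m\|=1$ for all $m$, $(\nu_m)$ is strictly increasing, $\|(T_{\nu_m}-\lambda)y_m\|\to 0$, and for each fixed $i$ we have $\langle y_m,e_i\rangle\to 0$ (since $|\langle y_m,e_i\rangle|<1/m$ once $m\ge i$); as $(y_m)$ is bounded and $\{e_i\}$ is an orthonormal basis, this upgrades to $y_m\rightharpoonup 0$. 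Hence $(y_m)$ witnesses $\lambda\in\limess$.

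The only genuinely delicate point is the diagonal step: for each $m$ the weak convergence $x^{(m)}_k\rightharpoonup 0$ holds only as $k\to\infty$ with $m$ fixed, and one must ensure that the single sequence $(y_m)_m$ assembled by picking one term from each of these still tends weakly to $0$. This is precisely where separability of $\h$ enters: weak convergence of a bounded sequence is governed by the countably many functionals $\langle\,\cdot\,,e_i\rangle$ (equivalently, the weak topology is metrizable on the unit ball), so all three conditions above can be arranged simultaneously. The remaining steps are routine bookkeeping.
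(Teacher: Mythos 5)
Your proof is correct and follows essentially the same route as the paper: part (i) is proved by a diagonal extraction of one witness per $m$, using separability of $\h$ to control weak convergence (your coordinate-wise test against an orthonormal basis is just a concrete form of the paper's appeal to metrizability of the weak topology on the unit ball), and part (ii) is handled by the same monotonicity observation that the sets $\bigcup_{\delta\in(0,\eps]}\limpseu{\delta}$ shrink as $\eps$ decreases.
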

\begin{proof}
	We first prove (i). Let $\lambda\in \bigcap_{\eps>0}\bigcup_{\delta\in(0,\eps]} \limpseu{\delta}$. Then for all $\eps>0$ there exists $\delta\in(0,\eps]$ and a sequence $(x_k)$ with $x_k\in\dom(T_{n_k})$ (for some subsequence $(n_k)$) such that
	\begin{itemize}
		\item $\|x_k\|=1$ for all $k$
		\item $x_k\rightharpoonup 0$ as $k\to\infty$
		\item $\|(T_{n_k}-\lambda)x_k\|\to\delta$.
	\end{itemize}
	Hence, for every $m\in\N$ there exists a sequence $(x_k^{(m)})_{k\in\N}$ with $\bigl\|x_k^{(m)}\bigr\|=1$, $x_k^{(m)}\xrightharpoonup{k\to\infty}~0$ and 
	\begin{equation*}
		\lim_{k\to\infty} \left\|(T_{n_k(m)}-\lambda)x_k^{(m)}\right\| <\f1m.
	\end{equation*}
	The notation $n_k(m)$ indicates that the corresponding subsequence of $(T_n)$ depends on $m$. Now, construct a diagonal sequence as follows. Since $\h$ is separable, the weak topology is metrisable on the unit ball. Let $d$ denote a corresponding metric. Now, for any given $m\in\N$, choose $k_m\in\N$ large enough such that
	\begin{align*}
		d\big(x_{k_m}^{(m)},0\big)&<\f1m\\
		\left\|(T_{n_{k_m}(m)}-\lambda)x_{k_m}^{(m)}\right\| &<\f1m.
	\end{align*}
	Then for the sequence $y_m:=x^{(m)}_{k_m}$, one has $\|y_m\|=1$ for all $m$, $d(y,0)\to 0$ and $\|(T_{n_{k_m}(m)}-\lambda)y_m\|\to 0$ as $m\to\infty$. Hence $\lambda\in\limess$.
	
	The proof of (ii) is now trivial, since the sequence of sets $\bigcup_{\delta\in(0,\eps]} \limpseu{\delta}$ is shrinking with $\eps$.
\end{proof}

Next, we prove convergence of the algorithm $\Gamma_n^{(1)}$. By the conditions in \eqref{eq:Omega} and Lemma \ref{lemma:strong_on_domain} and Proposition \ref{lemma:strong_if_T^2}, we have $T_n\xrightarrow{s}T$ for all $T\in\Om$. Let $\eps>0$. Then for large enough $n\in\N$ one has $\f1n<\eps$ and thus
\begin{equation}
	\Gamma_n^{(1)}(T)\subset\sigma_{\f1n}(T_n)\subset\sigma_\eps(T_n).
\end{equation}
According to \cite[Th. 3.6 ii)]{B18}, pseudospectral pollution of the approximation $T_n\to T$ is confined to
\begin{align*}
	\sigma_e\Tn\cup\bigcup_{\delta\in(0,\eps]}\limpseu{\delta}.
\end{align*}
Hence, if $\lambda_n\in\Gamma_n^{(1)}(T)$ and $\lambda_n\to\lambda\in\C$, it follows that
\begin{align*}
	\lambda &\in  \sigma_\eps(T)\cup \limess \cup\bigcup_{\delta\in(0,\eps]}\limpseu{\delta}.
\end{align*}
 Since this holds for any $\eps>0$, we immediately obtain
 \begin{align}\label{eq:liminc}
 	\lambda\in \bigcap_{\eps>0}\left[ \sigma_\eps(T)\cup \limess \cup\bigcup_{\delta\in(0,\eps]}\limpseu{\delta}\right].
 \end{align}
\begin{lemma}\label{lem:followsfrom4}
	It follows from \eqref{eq:liminc} that 
	\begin{align*}
		\lambda\in \sigma(T)\cup\limess.
	\end{align*}
\end{lemma}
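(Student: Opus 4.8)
The plan is to deduce the statement from \eqref{eq:liminc} by a short case distinction, relying on two elementary facts about pseudospectra: that $\eps\mapsto\sigma_\eps(T)$ is monotone non-decreasing — since $\sigma_\eps(T)=\{\mu\in\C:\|(T-\mu)^{-1}\|\ge 1/\eps\}$ with the convention $\|(T-\mu)^{-1}\|=+\infty$ for $\mu\in\sigma(T)$ — and, as a consequence, that $\bigcap_{\eps>0}\sigma_\eps(T)=\sigma(T)$.

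First I would dispose of the easy case: if $\lambda\in\sigma_\eps(T)$ for \emph{every} $\eps>0$, then $\lambda\in\bigcap_{\eps>0}\sigma_\eps(T)=\sigma(T)$ and we are finished. Otherwise there is some $\eps_0>0$ with $\lambda\notin\sigma_{\eps_0}(T)$, and then by monotonicity $\lambda\notin\sigma_\eps(T)$ for all $\eps\in(0,\eps_0]$. Substituting this into \eqref{eq:liminc} — which places $\lambda$ in the union of the three sets for \emph{each} $\eps>0$ — leaves
\[
	\lambda\in\limess\cup\bigcup_{\delta\in(0,\eps]}\limpseu{\delta}\qquad\text{for every }\eps\in(0,\eps_0].
\]
Now either $\lambda\in\limess$, and we are done, or $\lambda$ must lie in $\bigcup_{\delta\in(0,\eps]}\limpseu{\delta}$ for all $\eps\in(0,\eps_0]$; since this inclusion then holds for $\eps>\eps_0$ as well (the index set only grows), $\lambda$ lies in $\bigcup_{\delta\in(0,\eps]}\limpseu{\delta}$ for \emph{every} $\eps>0$, so $\lambda\in\bigcap_{\eps>0}\bigcup_{\delta\in(0,\eps]}\limpseu{\delta}\subset\limess$ by Lemma \ref{lem:LambdaSigma}(i). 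Either way $\lambda\in\sigma(T)\cup\limess$, which is what we want.

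I do not expect any real obstacle here: the argument is essentially set-theoretic bookkeeping. The two points that warrant a little care are checking that the case split is genuinely exhaustive, and keeping the pseudospectrum convention consistent — it is precisely the convention $\|(T-\mu)^{-1}\|=+\infty$ on $\sigma(T)$ that makes $\bigcap_{\eps>0}\sigma_\eps(T)=\sigma(T)$ hold with no extra assumption on $T$ (beyond the closedness automatic for selfadjoint operators) — together with remembering that $\limess$ and $\limpseu{\delta}$ refer to the approximating sequence $(T_n)$, whose members are bounded and selfadjoint by Lemma \ref{lemma:strong_on_domain}, so that Lemma \ref{lem:LambdaSigma} does apply.
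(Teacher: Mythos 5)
Your proof is correct and takes essentially the same route as the paper's: a case distinction resting on the monotonicity of $\eps\mapsto\sigma_\eps(T)$, the identity $\bigcap_{\eps>0}\sigma_\eps(T)=\sigma(T)$, and Lemma \ref{lem:LambdaSigma} (you invoke part (i) where the paper uses the equivalent part (ii) with a sequence $\eps_k\searrow 0$). No gaps.
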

\begin{proof}
	Let \eqref{eq:liminc} hold. Then 
	\begin{itemize}
		\item[-] Either there exists $\eps_0>0$ such that $\lambda\in \sigma_\eps(T)\cup\limess$ for all $\eps\in(0,\eps_0)$, or
		\item[-] there exists a sequence $\eps_k$ with $\eps_k\searrow0$ and $\lambda\in \bigcup_{\delta\in(0,\eps_k]}\limpseu{\delta}$ for all $k$.
	\end{itemize}
	In the first case, it follows that 
	\begin{align*}
		\lambda &\in \bigcap_{\eps>0}\Big(\sigma_\eps(T)\cup\limess\Big) \\
		&= \left(\bigcap_{\eps>0}\sigma_\eps(T)\right)\cup\limess \\
		&= \sigma(T)\cup\limess.
	\end{align*}
	In the second case, we have 
	\begin{align*}
		\lambda &\in \bigcap_{k\in\N}\bigcup_{\delta\in(0,\eps_k]}\limpseu{\delta}\\
		&\subset \limess,
	\end{align*}
	by Lemma \ref{lem:LambdaSigma} (ii).
\end{proof}
To conclude, we apply \cite[Th. 6.1]{BMC} to show that spectral pollution is in fact absent for $T\in\Omega_1$. Indeed, let $\lambda_n\in\Gamma_n^{(1)}(T)$ with $\lambda_n\to\lambda$. Then by Lemma \ref{lem:followsfrom4} and \cite[Prop. 5.6, Th. 6.1]{BMC}, we get
\begin{align*}
	\lambda &\in \sigma(T)\cup\limess\\
	&\subset\sigma(T)\cup W_e(T)\\
	&= \sigma(T)\cup\operatorname{conv}(\widehat\sigma_e(T))\setminus\{\pm\infty\}\\
	&= \sigma(T)\cup \sigma_e(T)\\
	&= \sigma(T).
\end{align*}
It remains to prove spectral inclusion, i.e. nothing is missed by $\Gamma_n^{(1)}(T)$.

\begin{lemma}
	For every $\lambda\in\sigma(T)$ there exist $\lambda_n\in\Gamma_n^{(1)}(T)$ such that $\lambda_n\to\lambda$.
\end{lemma}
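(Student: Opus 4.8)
The plan is to use the core hypothesis together with self-adjointness of $T$ to manufacture approximate eigenvectors of $T_n$ living inside the subspaces $\h_n$, deduce from this that $s(T_n-\lambda)\to 0$, and then replace $\lambda$ by a nearby lattice point of $G_n$ without losing the bound $s\le\tfrac1n$.

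\textbf{Step 1 (approximate eigenvectors in $\h_n$).} Fix $\lambda\in\sigma(T)$. Since $T$ is self-adjoint, for each $k\in\N$ there is a unit vector $u_k\in\dom(T)$ with $\|(T-\lambda)u_k\|<\tfrac1k$. As $\bigcup_n\h_n$ is a core of $T$ it is dense in $\dom(T)$ for the graph norm, so one may pick $v_k\in\h_{m_k}$ with $\|u_k-v_k\|<\tfrac1k$ and $\|Tu_k-Tv_k\|<\tfrac1k$; after normalising, $w_k:=v_k/\|v_k\|$ is a unit vector in $\h_{m_k}$ with $\|(T-\lambda)w_k\|\le\eta_k$ and $\eta_k\to 0$. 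For $n\ge m_k$ one has $w_k\in\h_n$, hence $T_nw_k=P_nTw_k$ and $\lambda w_k=P_n(\lambda w_k)$, so $(T_n-\lambda)w_k=P_n(T-\lambda)w_k$ and $\|(T_n-\lambda)w_k\|\le\|(T-\lambda)w_k\|\le\eta_k$. Since $s(T_n-\lambda)\le\|(T_n-\lambda)w\|$ for every unit vector $w\in\h_n$, it follows that $s(T_n-\lambda)\le\eta_k$ whenever $n\ge m_k$, and therefore $s(T_n-\lambda)\to 0$.

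\textbf{Step 2 (passing to the grid).} Each $T_n$ is self-adjoint on the finite-dimensional space $\h_n$, so $\sigma(T_n)\subset\R$ and $s(T_n-z)=\dist(z,\sigma(T_n))$ for all $z\in\C$ (equivalently, use Lemma \ref{lemma:finite_testing}(i) and the spectral theorem for $T_n$). By Step 1, $\dist(\lambda,\sigma(T_n))\to 0$; let $\mu_n\in\sigma(T_n)$ attain this distance, so $\mu_n\in\R$, $\mu_n\to\lambda$, and $s(T_n-\mu_n)=0$. For $n$ large enough that $|\mu_n|<n-1$, the nearest point $\lambda_n$ of the lattice $\tfrac1n(\mathbb Z+i\mathbb Z)$ to $\mu_n$ satisfies $|\lambda_n-\mu_n|\le\tfrac1{\sqrt2\,n}<\tfrac1n$ and lies in $B_n(0)$, hence $\lambda_n\in G_n$; since $s(\cdot)$ is $1$-Lipschitz for the operator norm,
\begin{align*}
	s(T_n-\lambda_n)\;\le\;s(T_n-\mu_n)+|\mu_n-\lambda_n|\;=\;|\mu_n-\lambda_n|\;<\;\tfrac1n .
\end{align*}
Thus $\lambda_n\in\Gamma_n^{(1)}(T)$, while $|\lambda_n-\lambda|\le|\lambda_n-\mu_n|+|\mu_n-\lambda|\to 0$. (The values of $\lambda_n$ for the finitely many remaining $n$ are immaterial for convergence, and this also shows $\Gamma_n^{(1)}(T)\neq\emptyset$ for large $n$.)

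\textbf{Main obstacle.} The delicate point is Step 2. Step 1 delivers $s(T_n-\lambda)\to 0$ but with \emph{no quantitative rate}, because the index $m_k$ at which $w_k$ first lies in one of the $\h_n$ may grow arbitrarily fast. Consequently one cannot simply take $\lambda_n$ to be the grid point nearest $\lambda$: that would only give $s(T_n-\lambda_n)\le s(T_n-\lambda)+\tfrac1{\sqrt2\,n}$, which need not be $\le\tfrac1n$. The resolution is to slide over to the genuine eigenvalue $\mu_n$ of $T_n$ closest to $\lambda$ — where $s$ vanishes exactly — so that the grid discretisation error $\tfrac1{\sqrt2\,n}$ is the only loss and stays strictly below the tolerance $\tfrac1n$.
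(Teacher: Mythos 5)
Your proof is correct, and it reaches the same endgame as the paper (slide to a true eigenvalue $\mu_n\in\sigma(T_n)$, then take the nearest lattice point of $G_n$, losing only the grid error $<\f1n$), but it gets the crucial input --- the existence of $\mu_n\in\sigma(T_n)$ with $\mu_n\to\lambda$ --- by a genuinely different and more self-contained route. The paper obtains this from strong resolvent convergence $T_n\to T$ (Proposition \ref{lemma:strong_if_T^2}) together with ``a simple adaptation of'' the spectral-inclusion theorem \cite[Th.\ VIII.24]{RS}, i.e.\ it goes through the functional-calculus argument for strongly resolvent-convergent selfadjoint operators, adapted to truncations acting on the subspaces $\h_n$. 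You instead work at the level of Weyl sequences: selfadjointness gives unit vectors $u_k$ with $\|(T-\lambda)u_k\|\to 0$, the core property lets you replace them by graph-norm-close vectors in some $\h_{m_k}$, and the nestedness $\h_n\subset\h_{n+1}$ plus $(T_n-\lambda)w_k=P_n(T-\lambda)w_k$ turns these into approximate eigenvectors of every $T_n$ with $n\ge m_k$, yielding $s(T_n-\lambda)=\dist(\lambda,\sigma(T_n))\to 0$ directly. This avoids invoking Proposition \ref{lemma:strong_if_T^2} and the Reed--Simon citation altogether (at the cost of using selfadjointness of $T_n$ and the identity $s(T_n-z)=\dist(z,\sigma(T_n))$, both available here), and it makes explicit the point the paper leaves implicit, namely why one must pass to $\mu_n$ rather than test the grid point nearest to $\lambda$ itself. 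One cosmetic remark: like the paper's own proof, your construction only produces $\lambda_n\in\Gamma_n^{(1)}(T)$ for all sufficiently large $n$ (you need $|\mu_n|<n-1$ so that the nearest lattice point lies in $B_n(0)$); as you note, this suffices for the convergence statement and for the use made of it in Proposition \ref{prop:AW_convergence}.
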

\begin{proof}
	Let $\lambda\in\sigma(T)$. A simple adaption of the proof of \cite[Th. VIII.24]{RS} shows that there exists a sequence $(\mu_n)$ with $\mu_n\in\sigma(T_n)$ and $\mu_n\to\lambda$. 
	
	For each $n$, there exists $\lambda_n\in G_n$ such that $|\mu_n-\lambda_n|<\f{1}{n}$ and hence $\|(T_n-\lambda_n)^{-1}\|_{L(\h_n)}\geq n$ which implies $\lambda_n\in\Gamma_n^{(1)}(T)$. Since $|\mu_n-\lambda_n|\to 0$ and $\mu_n\to\lambda$, it follows that $\lambda_n\to\lambda$.
\end{proof}

\paragraph{Conclusion.}
We have shown that 
\begin{enuma}
	\item If $\lambda_n\in\Gamma_n^{(1)}(T)$ and $\lambda_n\to\lambda$, then $\lambda\in\sigma(T)$.
	\item If $\lambda\in\sigma(T)$, then there exist $\lambda_n\in\Gamma_n^{(1)}(T)$ with $\lambda_n\to\lambda$.
\end{enuma}
It only remains to show that this implies Attouch-Wets convergence. We recall that $d_{\mathrm{AW}}(X_n,X)\to 0$ if and only if $d_K(X_n,X)\to 0$ for all $K\subset \C$ compact, where
\begin{align*}
	d_K(X,Y) := \max\left\{ \sup_{x\in X\cap K}\dist(x,Y)\,,\,\sup_{y\in Y\cap K}\dist(y,X) \right\}.
\end{align*}
\begin{prop}\label{prop:AW_convergence}
	If (a), (b) above hold, then $d_{\mathrm{AW}}(\Gamma_n^{(1)}(T),\sigma(T))\to 0$.
\end{prop}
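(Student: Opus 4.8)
The plan is to reduce everything to the criterion recalled immediately above the statement: $d_{\mathrm{AW}}\big(\Gamma_n^{(1)}(T),\sigma(T)\big)\to0$ holds if and only if $d_K\big(\Gamma_n^{(1)}(T),\sigma(T)\big)\to0$ for every compact $K\subset\C$. Fixing such a $K$ and unwinding the definition of $d_K$, it suffices to establish the two one-sided bounds
\begin{align*}
	\alpha_n(K)&:=\sup_{\lambda\in\Gamma_n^{(1)}(T)\cap K}\dist\big(\lambda,\sigma(T)\big)\longrightarrow0,\\
	\beta_n(K)&:=\sup_{\mu\in\sigma(T)\cap K}\dist\big(\mu,\Gamma_n^{(1)}(T)\big)\longrightarrow0,
\end{align*}
where the first should come out of property (a) (no spectral pollution) and the second out of property (b) (spectral inclusion). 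Since $\Gamma_n^{(1)}(T)\subset G_n$ is a finite set and $\sigma(T)\cap K$ is compact, all suprema above are attained, or taken over the empty set, in which case they equal $0$ and there is nothing to prove. I would prove each bound by a routine compactness-and-contradiction argument.

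For $\alpha_n(K)\to0$: if this fails there are $\delta>0$, a subsequence $(n_j)$ and points $\lambda_j\in\Gamma_{n_j}^{(1)}(T)\cap K$ with $\dist(\lambda_j,\sigma(T))\ge\delta$. By compactness of $K$ we may pass to a further subsequence with $\lambda_j\to\lambda\in K$. Now one applies property (a) to conclude $\lambda\in\sigma(T)$; then $\dist(\lambda_j,\sigma(T))\le|\lambda_j-\lambda|\to0$, contradicting $\dist(\lambda_j,\sigma(T))\ge\delta$. The one point needing a remark here is that (a) was phrased for a sequence indexed by all $n$, whereas we only have a subsequence $(n_j)$; but the proof of (a) given in the Conclusion paragraph is expressed entirely in terms of the limiting sets $\limess$, $\limpseu{\delta}$ and the cited results of \cite{B18} and \cite{BMC}, all of which are formulated for arbitrary subsequences, so it applies verbatim with $(T_n)_{n}$ replaced by $(T_{n_j})_{j}$. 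Hence (a) does hold along subsequences.

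For $\beta_n(K)\to0$: we may assume $\sigma(T)\cap K\neq\emptyset$ (otherwise $\beta_n(K)\equiv0$); fixing $\mu_0\in\sigma(T)\cap K$ and applying (b) produces $\lambda_n\in\Gamma_n^{(1)}(T)$ with $\lambda_n\to\mu_0$, so in particular $\Gamma_n^{(1)}(T)\neq\emptyset$ for all large $n$ and $\beta_n(K)<\infty$ eventually. If $\beta_n(K)\not\to0$ there are $\delta>0$, a subsequence $(n_j)$ and $\mu_j\in\sigma(T)\cap K$ with $\dist(\mu_j,\Gamma_{n_j}^{(1)}(T))\ge\delta$; by compactness and closedness of $\sigma(T)$ a further subsequence gives $\mu_j\to\mu\in\sigma(T)\cap K$. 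Applying (b) to $\mu$ yields $\lambda_n\in\Gamma_n^{(1)}(T)$ with $\lambda_n\to\mu$, and then $\dist(\mu_j,\Gamma_{n_j}^{(1)}(T))\le|\mu_j-\lambda_{n_j}|\le|\mu_j-\mu|+|\mu-\lambda_{n_j}|\to0$, a contradiction.

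I do not expect a genuine obstacle: the statement is essentially the observation that conditions (a) and (b) say precisely that $\Gamma_n^{(1)}(T)\to\sigma(T)$ in the Painlev\'e--Kuratowski sense, which for closed subsets of the locally compact space $\C$ is equivalent to convergence uniformly on compacta, i.e.\ to Attouch--Wets convergence. The only mildly delicate point is the bookkeeping around subsequences in the use of (a), handled by the remark above; everything else is elementary.
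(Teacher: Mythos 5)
Your proof is correct and follows essentially the same route as the paper: reduce Attouch--Wets convergence to $d_K$-convergence for every compact $K\subset\C$, then handle the pollution direction by extracting convergent subsequences and invoking (a), and the inclusion direction by a compactness argument based on (b) (the paper uses a finite subcover there and a subsequence-of-subsequences argument instead of your contradiction phrasing, but these are the same elementary ideas). Your explicit remark that (a) remains valid along subsequences, because the underlying sets $\limess$ and $\limpseu{\delta}$ are defined via subsequences, addresses a point the paper's own proof uses implicitly, so it is a clarification rather than a deviation.
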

\begin{proof}
	Let $K\subset\C$ be compact. We will show that if (a), (b) hold, then both distances $\sup_{z\in \Gamma_n^{(1)}(T)\cap K}\dist(z,\sigma(T))$ and $\sup_{w\in \sigma(T)\cap K}\dist\big(w,\Gamma_n^{(1)}(T)\big)$ converge to zero. We begin with the latter.
	
	Let $\eps>0$. For all $w\in\sigma(T)\cap K$, the ball $B_\eps(w)$ contains infinitely many elements $z_n\in\Gamma_n^{(1)}(T)$, by (b). The collection $\{B_\eps(w)\,|\,w\in \sigma(T)\cap K\}$ forms an open cover of the compact set $\sigma(T)\cap K$. Hence, there exist finitely many $w_1,\dots ,w_k\in \sigma(T)\cap K$ such that $B_\eps(w_1),\dots,B_\eps(w_k)$ cover $\sigma(T)\cap K$. Now, any $w\in \sigma(T)\cap K$ is contained in some $B_\eps(w_i)$ and hence $\dist(w,\Gamma_n^{(1)}(T))<\eps$ for any $w\in\sigma(T)\cap K$, as soon as $n=n(i)$ is large enough. But since there are only finitely many $B_\eps(w_i)$, one will have $\dist(w,\Gamma_{n_0}^{(1)}(T))<2\eps$ for all $w\in\sigma(T)\cap K$ for $n_0 = \max\{n_i\,|\,i=1,\dots,k\}$.
	
	To show that $\sup_{z\in \Gamma_n^{(1)}(T)\cap K}\dist(z,\sigma(T))\to 0$ as $n\to\infty$, note that since all sets $\Gamma_n^{(1)}(T)\cap K$ are compact, we can choose a sequence $z_n\in\Gamma_n^{(1)}(T)\cap K$ such that 
	\begin{align*}
		\sup_{z\in \Gamma_n^{(1)}(T)\cap K}\dist(z,\sigma(T)) = \dist(z_n,\sigma(T)).
	\end{align*}
	Since the sequence $(z_n)$ is obviously bounded, we can extract a convergent subsequence $z_{n_j}\to z_0\in K$. Now use assertion (a) from above to conclude that in fact $z_0\in\sigma(T)\cap K$. This readily implies 
	\begin{align*}
		\sup_{z\in \Gamma_{n_j}^{(1)}(T)\cap K}\dist(z,\sigma(T)) &= \dist(z_{n_j},\sigma(T))\to 0.
	\end{align*}
	Since the same reasoning can be applied to every subsequence of the sequence $$\left(\sup_{z\in \Gamma_n^{(1)}(T)\cap K}\dist(z,\sigma(T))\right)_{n\in\N},$$ we conclude that the whole sequence converges to zero.
\end{proof}

\section{Relatively Compact Perturbations}\label{sec:Perturbations}

In this section we show that Theorem \ref{th:mainth} remains true for certain relatively compact, bounded perturbations of selfadjoint operators. More precisely, we have
\begin{theorem}\label{th:perturbation}
	Define a computational problem by
	\begin{align*}
		\Omega_2&:=\left\{H=T+V:\dom(T)\to\h \;\Bigg|\; \parbox{7.7cm}{$T \text{ \rm selfadjoint, semibounded, }\bigcup_{n\in\N}\h_n\text{ \rm core for } T,\\
		\sigma(T)=\sigma_{e5}(T), \;\;\widehat\sigma_e(T)\text{ \rm convex, }\\
	 		V\in L(\h)\text{ \rm and } V,V^*\text{ \rm are } $T$\text{\rm-compact.}
	 		$} \right\}
	 \end{align*}
	(where $\sigma_{e5}(T)$ will be defined below). For every $H\in\Om_2$, choose a decomposition $H=T+V$ as in the definition of $\Om_2$ and define the maps $s_T(H):=T$ and $s_V(H):=V$. Then let
	 \begin{align*}
	 		\Lambda_2 &:= \left\{ f_{i,j,n}\circ s_T\,|\,1\leq i,j\leq n,\;n\in\N \right\}\cup \left\{ f_{i,j,n}\circ s_V\,|\,1\leq i,j\leq n,\;n\in\N \right\},
	\end{align*}
	where $f_{i,j,n}$ are the evaluation functions producing the $(i,j)$th matrix elements (see \eqref{eq:def_Lambda_1}).
	Then one has $\SCI(\Om_2,\Lambda_2,\sigma(\cdot))=1$.
\end{theorem}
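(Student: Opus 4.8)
The plan is to mirror the proof of Theorem~\ref{th:mainth} with $H=T+V$ in place of $T$, inserting at the two places where self-adjointness of the operator was used---the absence of spectral pollution and the spectral inclusion---the invariance of the essential numerical range and of $\sigma_{e5}$ under $T$-compact perturbations, together with a separate treatment of the isolated eigenvalues that $H$ may now carry. First I set up the algorithm. Put $H_n:=P_nH|_{\h_n}$; since $V$ is bounded we have $\dom(H)=\dom(T)$, $\bigcup_n\h_n$ is a core of $H$, and $H_n=T_n+V_n$ with $T_n=P_nT|_{\h_n}$, $V_n=P_nV|_{\h_n}$, so the maps $s_T,s_V$ and the evaluation functions in $\Lambda_2$ produce all matrix entries of $T_n$ and $V_n$ in the basis of $\h_n$, hence the matrix of $H_n$ with one addition per entry. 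Defining $\Gamma_n^{(2)}(H):=\{\lambda\in G_n : s(H_n-\lambda)\le 1/n\}$ with $G_n$ as in Section~\ref{sec:selfadjoint}, Lemma~\ref{lemma:finite_testing} (which uses only that $H_n$ is a finite square matrix) shows that $(\Gamma_n^{(2)})_n$ is an arithmetic tower of height one, and $\Gamma_n^{(2)}(H)\subset\sigma_{1/n}(H_n)$.

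Next I prove strong resolvent convergence $H_n\xrightarrow{s}H$ and collect the structural consequences of the hypotheses. Since $T$ is semibounded and $\|V\|<\infty$, fix a real $z_0<\inf\sigma(T)-\|V\|$; then $z_0\in\rho(T)$, $z_0\in\rho(H)$ (because $\dist(z_0,\sigma(T))>\|V\|$ makes $I+(T-z_0)^{-1}V$ boundedly invertible), and $z_0\in\rho(H_n)$ for all $n$ with $\sup_n\|(H_n-z_0)^{-1}\|<\infty$, since $\sigma(H_n)\subset W(H_n)=\{\langle Hx,x\rangle:x\in\h_n,\ \|x\|=1\}\subset\overline{W(H)}$ and $\dist(z_0,\overline{W(H)})>0$. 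Running the resolvent-identity computation of Lemma~\ref{lemma:strong_on_domain} with $z_0$ in place of $i$ gives
\[
(H_n-z_0)^{-1}P_n-(H-z_0)^{-1}=(H_n-z_0)^{-1}\bigl(P_nH-H_nP_n\bigr)(H-z_0)^{-1}-(I-P_n)(H-z_0)^{-1},
\]
and, exactly as in Proposition~\ref{lemma:strong_if_T^2}, the right-hand side tends to $0$ strongly on the dense set $(H-z_0)\bigl(\bigcup_n\h_n\bigr)$ and hence, by the uniform resolvent bound, on all of $\h$. For the structure: $T$ self-adjoint forces $\sigma_{e5}(T)=\sigma_{e1}(T)=\sigma_e(T)$, which by hypothesis equals $\sigma(T)$ and, by convexity of $\widehat\sigma_e(T)$, equals $\operatorname{conv}(\widehat\sigma_e(T))\setminus\{\pm\infty\}=W_e(T)$. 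Since $V,V^*$ are $T$-compact, $\sigma_{e1}$ is preserved, so $\sigma_{e1}(H)=\sigma(T)$; as every connected component of $\C\setminus\sigma(T)$ is unbounded and therefore meets $\rho(H)$, also $\sigma_{e5}(H)=\sigma(T)$, whence $\sigma(H)\setminus\sigma(T)$ consists of isolated eigenvalues of finite algebraic multiplicity; and the essential numerical range is likewise preserved, so $W_e(H)=W_e(T)=\sigma_e(T)\subset\sigma(H)$.

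With these facts the no-pollution argument of Section~\ref{sec:selfadjoint} transfers verbatim. For $\lambda_n\in\Gamma_n^{(2)}(H)$ with $\lambda_n\to\lambda$ one has $\Gamma_n^{(2)}(H)\subset\sigma_\varepsilon(H_n)$ eventually, so \cite[Th.~3.6]{B18} together with Lemmas~\ref{lem:LambdaSigma}, \ref{lem:followsfrom4} (stated for arbitrary closed densely defined operators) and \cite[Prop.~5.6, Th.~6.1]{BMC} give
\[
\lambda\in\sigma(H)\cup\sigma_e\bigl((H_n)_{n\in\N}\bigr)\subset\sigma(H)\cup W_e\bigl((H_n)_{n\in\N}\bigr)\subset\sigma(H)\cup W_e(H)=\sigma(H),
\]
so no spectral pollution occurs.

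It remains to prove spectral inclusion, and this is where the main difficulty lies, because $H$ is not self-adjoint and the Rellich--Kato argument of Theorem~\ref{th:mainth} is no longer available. One splits $\sigma(H)=\sigma(T)\cup D$ with $D:=\sigma(H)\setminus\sigma(T)$. For $\lambda\in D$---an isolated eigenvalue of finite algebraic multiplicity lying outside $\sigma(T)=W_e(H)$ and hence, by the previous paragraph, away from every limit point of $\sigma(H_n)$ other than $\lambda$---one uses a Riesz-projection argument: the bound $\sup_n\sup_{|z-\lambda|=r}\|(z-H_n)^{-1}P_n\|<\infty$ on a small circle (a consequence of strong resolvent convergence and of the absence of pollution on that circle) makes the approximating spectral projections converge strongly to the nonzero Riesz projection of $H$ at $\lambda$, forcing $\sigma(H_n)\cap B_r(\lambda)\neq\emptyset$ for large $n$ and thus producing $\mu_n\in\sigma(H_n)$ with $\mu_n\to\lambda$. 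For $\lambda\in\sigma(T)=\sigma_e(T)=\sigma_{e2}(H)$ one must establish absence of essential-spectral loss, i.e.\ $\sigma(T)\subset\liminf_n\sigma(H_n)$: the self-adjoint theory for $T_n\to T$ (the adaptation of \cite[Th.~VIII.24]{RS} used in Section~\ref{sec:selfadjoint}) supplies approximate eigenvectors of $T_n$ at $\lambda$, and the relative compactness of $V$ is what keeps them approximate eigenvectors of $H_n$ and promotes this to genuine nearby eigenvalues of $H_n$ without the non-normality of $H_n$ separating its spectrum from $\sigma(T)$. In either case, choosing $\lambda_n\in G_n$ with $|\lambda_n-\mu_n|<1/n$ yields $s(H_n-\lambda_n)\le s(H_n-\mu_n)+|\lambda_n-\mu_n|<1/n$, so $\lambda_n\in\Gamma_n^{(2)}(H)$ and $\lambda_n\to\lambda$. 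Combining the absence of pollution with this spectral inclusion, Proposition~\ref{prop:AW_convergence} (whose proof used only these two facts) gives $d_{\mathrm{AW}}(\Gamma_n^{(2)}(H),\sigma(H))\to0$, hence $\SCI(\Om_2,\Lambda_2,\sigma(\cdot))=1$. The genuinely hard point is exactly this spectral exactness of the non-normal truncations: the discrete eigenvalues in $D$ are reached only because the pollution-free region established earlier isolates each of them, and the essential part $\sigma(T)$ must be recovered through the self-adjoint operator $T$, for which spectral inclusion is classical, exploiting the $T$-compactness of $V$ to transport near-eigenvectors to $H_n$.
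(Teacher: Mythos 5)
Your algorithm is not the paper's algorithm, and the difference is exactly where your argument breaks down. The paper defines $\Gamma_n^{(2)}(H)$ as the singular-value test on $H_n$ (and $H_n^*$) \emph{united with} $\Gamma_n^{(1)}(T)$, which is admissible because $\Lambda_2$ hands the algorithm the matrix elements of the selfadjoint part $T$ separately; since $\sigma(T)=\sigma_{e5}(T)=\sigma_{e5}(H)$, Theorem \ref{th:mainth} then guarantees that the essential part of $\sigma(H)$ is never missed, and only the discrete set $\sigma(H)\setminus\sigma_{e5}(H)$ has to be captured through the non-normal truncations $H_n$ (the paper does this via strong resolvent convergence of \emph{both} $H_n$ and $H_n^*$, Lemma \ref{lemma:H_conv_strongly}, combined with \cite[Th.\ 2.3 i)]{B18}). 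You drop the $\cup\,\Gamma_n^{(1)}(T)$ component, so you must prove spectral inclusion along all of $\sigma(T)$ using only $H_n$, and your sketch for this step does not work as stated. Knowing that suitable singular sequences of $T$ (or $T_n$) are turned into approximate eigenvectors of $H$ by $T$-compactness of $V$ does not transfer cleanly to the truncations: for $x_k\in\h_{n_k}$ with $\|(T_{n_k}-\lambda)x_k\|\to 0$ you control $P_{n_k}(T+\i)x_k$, not $(T+\i)x_k$, so the compactness of $V(T+\i)^{-1}$ cannot be applied directly. More fundamentally, even if you obtained $\|(H_n-\lambda)x_n\|\to 0$, this only says the resolvent norms blow up at an \emph{uncontrolled rate}, whereas membership in your set requires $s(H_n-\lambda_n)\le \f1n$ with the threshold shrinking like $\f1n$; and since $H_n$ is non-normal, small $s(H_n-\lambda)$ does not ``promote'' to genuine eigenvalues of $H_n$ near $\lambda$, nor does distance to $\sigma(H_n)$ control the resolvent norm from above. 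For $\lambda\in\sigma(T)$ you also cannot invoke \cite[Th.\ 2.3 i)]{B18}, because such $\lambda$ may lie in $\sigma_e\big((H_n)\big)\cup\sigma_e\big((H_n^*)\big)^*$. So the essential-spectrum inclusion, which you yourself flag as the hard point, is left unproven, and no mechanism in your proposal supplies the needed quantitative control.

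The rest of your outline is essentially sound and close to the paper: the no-pollution argument via \cite[Th.\ 3.6]{B18}, Lemmas \ref{lem:LambdaSigma} and \ref{lem:followsfrom4}, and $W_e(H)=W_e(T)=\operatorname{conv}(\widehat\sigma_e(T))\setminus\{\pm\infty\}\subset\sigma_e(H)$ (via \cite[Th.\ 4.5, Th.\ 3.8]{BMC}) is the paper's argument, and your uniform resolvent bound in a left half-plane is a legitimate substitute for the paper's $|\im z|>1+\|V\|$ Neumann-series bound. Your Riesz-projection route to the isolated eigenvalues could replace the paper's use of \cite[Th.\ 2.3 i)]{B18}, but as written it too has a loose end: the uniform bound $\sup_n\sup_{|z-\lambda|=r}\|(z-H_n)^{-1}\|<\infty$ does not follow from ``absence of spectral pollution'' alone; you need absence of \emph{pseudospectral} pollution on the contour (available from the same $W_e$ machinery, via a compactness argument on the circle), which should be said explicitly. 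The cleanest fix, and the one the paper intends by providing the decomposition in $\Lambda_2$, is to restore the $\cup\,\Gamma_n^{(1)}(T)$ term in the algorithm; then the essential part is handled by the selfadjoint theory and only the discrete eigenvalues require the perturbative analysis.
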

\begin{remark}
	Note that the information provided to the algorithm in $\Lambda_2$ includes the decomposition of $H\in\Om_2$ into a selfadjoint part $T$ and a perturbation $V$. This means, that the algorithm \emph{does not have to compute this decomposition}. It gets it for free. This is a reasonable assumption in many applications as we will see in Section \ref{sec:Schroedinger}.
\end{remark}

Note the additional assumption $\sigma(T)=\sigma_{e5}(T)$ in the selfadjoint part $T$. This will be needed later in order to exclude spectral pollution of the algorithm. 

\subsection{Proof of Theorem \ref{th:perturbation}}

\paragraph{Specrtum of $\boldsymbol H$.} 
The proof of Theorem \ref{th:perturbation} is via perturbation theory. We first focus on the spectrum of an operator $H\in\Om_2$ and recall some of the varying definitions of essential spectra. In Section \ref{sec:selfadjoint}, we introduced the set $\sigma_{e2}$ as 
\begin{align*}
	\sigma_{e2}(H):=\{\lambda\in \C\,|\, \exists x_k\in\dom(H) : \|x_k\|=1\,\forall k,\, x_k\rightharpoonup 0,\, \|(T-\lambda)x_k\|\to 0\}.
\end{align*}
We will need another version of the essential spectrum, which is sometimes denoted $\sigma_{e5}$:
\begin{align*}
	\sigma_{e5}(H):=\C\setminus\Delta_5(H),
\end{align*}
where $\Delta_5(H)$ denotes the union of all components of the set $\{\lambda\in\C\,|\,H-\lambda\text{ is semi-Fredholm}\}$ which intersect $\rho(H)$. For more details, cf. \cite[Ch. IX]{EE}. The following results are classical.

\begin{theorem}[{\cite[Th. IX.1.5]{EE}}]\label{th:EE}
	For any closed, densely defined operator $H$ on $\h$, one has $\lambda\notin\sigma_{e5}(H)$ if and only if $T-\lambda$ is Fredholm with $\operatorname{ind}(T-\lambda)=0$ and a deleted neighbourhood of of $\lambda$ lies in $\rho(H)$.
\end{theorem}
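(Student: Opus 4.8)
The plan is to unwind the definition $\sigma_{e5}(H)=\C\setminus\Delta_5(H)$ and prove the two implications of the equivalence separately, using as the only substantial input the classical stability theory of semi-Fredholm operators. Write $\Delta_1(H):=\{\lambda\in\C\,|\,H-\lambda\text{ is semi-Fredholm}\}$, so that by definition $\Delta_5(H)$ is the union of those connected components of $\Delta_1(H)$ that meet $\rho(H)$. From the perturbation theory of semi-Fredholm operators (see \cite[Ch.~I]{EE}) I would use exactly three facts: (a) $\Delta_1(H)$ is open; (b) the index $\lambda\mapsto\operatorname{nul}(H-\lambda)-\operatorname{def}(H-\lambda)$, valued in $\mathbb{Z}\cup\{\pm\infty\}$, is locally constant on $\Delta_1(H)$, so on any component containing a Fredholm point it is finite and constant; and (c) the punctured neighbourhood theorem applied to the entire (affine) family $\lambda\mapsto H-\lambda$: on each component $C$ of $\Delta_1(H)$ the functions $\operatorname{nul}(H-\cdot)$ and $\operatorname{def}(H-\cdot)$ are constant on $C$ off a relatively discrete set $S_C\subset C$, with strictly larger values on $S_C$. (We read the ``$T$'' in the statement as the evident misprint for ``$H$''.)

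For the direction ``$\Leftarrow$'', suppose $H-\lambda$ is Fredholm with $\operatorname{ind}(H-\lambda)=0$ and $D\setminus\{\lambda\}\subset\rho(H)$ for some open disc $D\ni\lambda$. Then $\lambda\in\Delta_1(H)$, and since $\rho(H)\subset\Delta_1(H)$ the \emph{connected} set $D=\{\lambda\}\cup(D\setminus\{\lambda\})$ lies in $\Delta_1(H)$, hence inside a single component $C$ of $\Delta_1(H)$. As $C$ contains points of $\rho(H)$ (any point of $D\setminus\{\lambda\}$), we get $\lambda\in C\subset\Delta_5(H)$, i.e. $\lambda\notin\sigma_{e5}(H)$.

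For the direction ``$\Rightarrow$'', suppose $\lambda\notin\sigma_{e5}(H)$, so $\lambda$ lies in a component $C$ of $\Delta_1(H)$ with $C\cap\rho(H)\neq\emptyset$; fix $\mu_0\in C\cap\rho(H)$, so $\operatorname{nul}(H-\mu_0)=\operatorname{def}(H-\mu_0)=0$ and $\operatorname{ind}(H-\mu_0)=0$. By (b), $\operatorname{ind}(H-\lambda)=0$ and in particular $\operatorname{nul}(H-\lambda),\operatorname{def}(H-\lambda)<\infty$, so $H-\lambda$ is Fredholm of index $0$; this is the first assertion. For the second, by (c) the generic value of $\operatorname{nul}(H-\cdot)$ on $C$ equals $\operatorname{nul}(H-\mu_0)=0$, so $S_C=\{\mu\in C\,|\,\operatorname{nul}(H-\mu)>0\}$ is relatively discrete in $C$, hence there is an open disc $D\ni\lambda$ with $D\subset C$ and $D\setminus\{\lambda\}\subset C\setminus S_C$. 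For $\mu\in D\setminus\{\lambda\}$ we then have $\operatorname{nul}(H-\mu)=0$ and, since the index is $0$, also $\operatorname{def}(H-\mu)=0$; thus $H-\mu:\dom(H)\to\h$ is a bijection, and as $H$ is closed the closed graph theorem gives $(H-\mu)^{-1}\in L(\h)$, i.e. $\mu\in\rho(H)$. Therefore $D\setminus\{\lambda\}\subset\rho(H)$, which finishes the proof.

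The hard part is entirely concentrated in facts (a)--(c): the openness of the semi-Fredholm region, local constancy of the index, and the punctured neighbourhood theorem. These rest on the analyticity (here affineness) of $\lambda\mapsto H-\lambda$ and are the only place closedness of $H$ is genuinely used; everything else is elementary point-set topology of open subsets of $\C$. The one small subtlety to keep in mind is that a punctured planar disc is connected — this two-dimensionality of $\C$ is precisely what lets us place $D$ inside a single component of $\Delta_1(H)$ in both directions. Finally, I note in passing that in ``$\Leftarrow$'' the hypothesis $\operatorname{ind}(H-\lambda)=0$ is actually redundant — $H-\lambda$ semi-Fredholm together with a deleted neighbourhood in $\rho(H)$ already forces $\lambda\notin\sigma_{e5}(H)$, and then ``$\Rightarrow$'' recovers index $0$ — but I would keep it to match the statement as given.
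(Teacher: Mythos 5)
Your argument is correct. Note, however, that the paper does not prove this statement at all: it is quoted verbatim as a classical result from Edmunds--Evans \cite[Th.~IX.1.5]{EE} (with the obvious misprint $T$ for $H$, which you read correctly), so there is no in-paper proof to compare against. What you have written is essentially the standard textbook derivation: you reduce everything to the three stability facts for the semi-Fredholm region --- openness of $\Delta_1(H)$, local constancy of the extended index, and the punctured neighbourhood theorem for $\operatorname{nul}$ and $\operatorname{def}$ --- and then the two implications are point-set topology plus the closed graph theorem, exactly as in Edmunds--Evans/Kato. The details check out: in ``$\Rightarrow$'' the deduction that $\operatorname{ind}=0$ forces both $\operatorname{nul}$ and $\operatorname{def}$ finite (since a semi-Fredholm operator has at least one of them finite) is right, as is the argument that $\mu_0\in\rho(H)$ cannot be an exceptional point, so the generic value of $\operatorname{nul}$ on the component is $0$; in ``$\Leftarrow$'' placing the full disc $D$ in one component is all that is needed. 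Two cosmetic remarks: your closing comment that connectivity of the \emph{punctured} disc is what is used is not quite accurate --- in both directions you only ever use that the full disc is connected and that components of the open set $\Delta_1(H)$ are open --- and your observation that the index-zero hypothesis is redundant in ``$\Leftarrow$'' is correct but, as you say, immaterial to the statement. Neither affects the validity of the proof.
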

In other words, if $\lambda\notin\sigma_{e5}(H)$, then $\lambda$ is an isolated eigenvalue of finite multiplicity. Furthermore, the following perturbation result is known.

\begin{theorem}[{\cite[XIII.4, Cor. 2]{RS}}]\label{th:RS}
	Let $T$ be a selfadjoint operator on $\h$ and $V$ relatively compact w.r.t. $T$. Then
	\begin{enumi}
		\item $H:=T+V$ is closed on $\dom(T)$ and
		\item $\sigma_{e5}(H)=\sigma_{e5}(T)$.
	\end{enumi}
\end{theorem}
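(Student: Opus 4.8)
\textbf{Proof plan for Theorem \ref{th:perturbation}.}

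The plan is to reduce the perturbation case to the selfadjoint case already treated in Theorem \ref{th:mainth}, combined with the classical perturbation facts Theorems \ref{th:EE} and \ref{th:RS}. As in Section \ref{sec:selfadjoint}, the lower SCI bound is trivial: one cannot have $\SCI=0$ for a nontrivial spectral problem, so it suffices to construct a single arithmetic tower of height one. For $H=T+V\in\Om_2$ I would set $H_n:=P_n H|_{\h_n}=T_n+V_n$ with $T_n:=P_nT|_{\h_n}$ and $V_n:=P_nV|_{\h_n}$, both computable from the information in $\Lambda_2$ since that set contains the matrix entries of $T$ and of $V$ separately. The candidate algorithm is the obvious analogue of $\Gamma_n^{(1)}$, namely
\begin{align*}
	\Gamma_n^{(2)}(H):=\left\{\lambda\in G_n\,\Big|\,s(H_n-\lambda)\leq\tfrac1n\right\},
\end{align*}
which by Lemma \ref{lemma:finite_testing} (applied to the matrix $H_n$) is an arithmetic tower of height one, and $\Gamma_n^{(2)}(H)\subset\sigma_{1/n}(H_n)$.

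The core of the proof is then to show $d_{\mathrm{AW}}(\Gamma_n^{(2)}(H),\sigma(H))\to 0$, and by Proposition \ref{prop:AW_convergence} it is enough to establish the two assertions (a) no spectral pollution: $\lambda_n\in\Gamma_n^{(2)}(H)$, $\lambda_n\to\lambda$ $\Rightarrow$ $\lambda\in\sigma(H)$; and (b) spectral inclusion: every $\lambda\in\sigma(H)$ is a limit of points $\lambda_n\in\Gamma_n^{(2)}(H)$. For (b) I would argue as in the proof of the last lemma of Section \ref{sec:selfadjoint}: since $\bigcup_n\h_n$ is a core for $T$ and $V$ is bounded, $\bigcup_n\h_n$ is a core for $H=T+V$, hence (using Lemma \ref{lemma:strong_on_domain}, Proposition \ref{lemma:strong_if_T^2} and that $V_nP_n\to V$ strongly because $V$ is bounded and $P_n\xrightarrow{s}I$) one gets $H_n\xrightarrow{s}H$; an adaptation of \cite[Th. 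VIII.24]{RS} gives $\mu_n\in\sigma(H_n)$ with $\mu_n\to\lambda$, and rounding to the grid $G_n$ produces the required $\lambda_n\in\Gamma_n^{(2)}(H)$. For (a) I would run the same two-step argument as in Section \ref{sec:selfadjoint}: first, \cite[Th. 3.6 ii)]{B18} confines pseudospectral pollution of $H_n\to H$ to $\sigma_e\Tn\cup\bigcup_{\delta\in(0,\eps]}\limpseu{\delta}$, so $\lambda\in\bigcap_{\eps>0}\big[\sigma_\eps(H)\cup\limess\cup\bigcup_{\delta\in(0,\eps]}\limpseu{\delta}\big]$, and Lemmas \ref{lem:LambdaSigma} and \ref{lem:followsfrom4} then give $\lambda\in\sigma(H)\cup\limess$. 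Finally, \cite[Prop. 5.6, Th. 6.1]{BMC} bound $\limess\subset W_e(H)=\operatorname{conv}(\widehat\sigma_e(H))\setminus\{\pm\infty\}$, so it remains to identify $\widehat\sigma_e(H)$ and show $\sigma(H)\cup W_e(H)=\sigma(H)$.

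This last identification is exactly where the perturbation hypotheses enter and is the main obstacle. The point is: $V,V^*$ being $T$-compact, Theorem \ref{th:RS} gives that $H$ is closed on $\dom(T)$ and $\sigma_{e5}(H)=\sigma_{e5}(T)$; moreover $\sigma_{e2}$ (equivalently, by selfadjointness/Fredholm stability, all the standard essential spectra lying inside $\sigma_{e5}$) is likewise preserved, so $\widehat\sigma_e(H)=\widehat\sigma_e(T)$ as a subset of $\C\cup\{\pm\infty\}$, which is convex by hypothesis. Hence $W_e(H)\subset\operatorname{conv}(\widehat\sigma_e(T))\setminus\{\pm\infty\}=\widehat\sigma_e(T)\setminus\{\pm\infty\}=\sigma_{e2}(T)=\sigma_{e5}(T)=\sigma_{e5}(H)\subset\sigma(H)$; the middle equality $\sigma_{e2}(T)=\sigma_{e5}(T)$ uses the assumption $\sigma(T)=\sigma_{e5}(T)$ together with the fact that for a selfadjoint operator the various essential spectra inside $\sigma(T)$ coincide away from isolated eigenvalues of finite multiplicity (and the hypothesis rules those out). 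I expect the delicate bookkeeping to be (i) checking that $\widehat\sigma_e$ — including the $\{\pm\infty\}$ decoration — is genuinely unchanged under the relatively compact perturbation, for which semiboundedness of $T$ and $T$-compactness of $V$ (so $H$ stays semibounded, forcing the $+\infty$ or $-\infty$ to transfer correctly) is used, and (ii) making sure $W_e(H)$ is controlled by $\widehat\sigma_e(H)$ rather than $\widehat\sigma_e(T)$; the cited results in \cite{BMC} handle this provided we feed them the equality $\widehat\sigma_e(H)=\widehat\sigma_e(T)$. Once (a), (b), and the height-one property are in place, Proposition \ref{prop:AW_convergence} yields $\Gamma_n^{(2)}(H)\to\sigma(H)$ in Attouch--Wets sense, and hence $\SCI(\Om_2,\Lambda_2,\sigma(\cdot))=1$.
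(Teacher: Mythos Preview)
First, a mismatch: the statement you were asked to address is Theorem~\ref{th:RS}, which the paper \emph{cites} from Reed--Simon and does not prove. Your proposal is instead a proof plan for Theorem~\ref{th:perturbation}. There is nothing to compare for Theorem~\ref{th:RS} itself; I will therefore evaluate your plan against the paper's actual proof of Theorem~\ref{th:perturbation}.

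As a plan for Theorem~\ref{th:perturbation}, there are three genuine gaps.

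\textbf{(1) The algorithm is too small.} You define $\Gamma_n^{(2)}(H)=\{\lambda\in G_n:s(H_n-\lambda)\le 1/n\}$. The paper's algorithm is this set \emph{union} $\Gamma_n^{(1)}(T)$. The union is not cosmetic: it is what guarantees spectral inclusion on $\sigma_{e5}(H)=\sigma(T)$. Your inclusion argument (``adapt \cite[Th.~VIII.24]{RS}'') relies on the fact that strong resolvent convergence implies spectral inclusion, but that theorem is for \emph{selfadjoint} operators. For the non-selfadjoint $H_n\to H$ this fails in general; the paper instead uses \cite[Th.~2.3~i)]{B18} for isolated eigenvalues (which needs $\lambda\notin\sigma_e((H_n))\cup\sigma_e((H_n^*))^*$) and handles the essential spectrum separately via $\Gamma_n^{(1)}(T)$.

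\textbf{(2) The identity $W_e(H)=\operatorname{conv}(\widehat\sigma_e(H))\setminus\{\pm\infty\}$ is not available for non-selfadjoint $H$.} The result \cite[Th.~3.8]{BMC} you invoke applies to the selfadjoint $T$, not to $H$. The paper's route is different and essential: it first uses \cite[Th.~4.5]{BMC} (stability of $W_e$ under relatively compact perturbations with $\re(V),\im(V)$ both $T$-compact) to get $W_e(H)=W_e(T)$, and only \emph{then} applies the convex-hull formula to $T$. Your attempt to transfer $\widehat\sigma_e$ from $T$ to $H$ and then apply the formula to $H$ does not go through, and the ``delicate bookkeeping'' you anticipate about the $\pm\infty$ decoration for a non-selfadjoint operator is a symptom of this.

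\textbf{(3) The pollution bound for non-selfadjoint approximations has an extra term.} For $H_n$ not selfadjoint, \cite[Th.~3.6~ii)]{B18} confines pseudospectral pollution to $\sigma_e((H_n))\cup\sigma_e((H_n^*))^*\cup\bigcup_{\delta\le\eps}\Lambda_{e,\delta}((H_n))$; you dropped $\sigma_e((H_n^*))^*$. The paper keeps it and then absorbs both limiting essential spectra into $W_e(H)$ via \cite[Th.~6.1]{BMC}, which in turn requires strong resolvent convergence of $H_n^*\to H^*$ --- another step you do not have.
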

From Theorems \ref{th:EE} and \ref{th:RS} we immediately conclude that for all $H\in\Om_2$ the spectrum of $\h$ is of the form 
\begin{align*}
	\sigma(H) = \sigma(T)\cup\{\lambda_1, \lambda_2,\dots\},
\end{align*}
with isolated eigenvalues $\lambda_i\in\C$.

\paragraph{Strong resolvent convergence.}
Let $P_n:\h\to\h_n$ be defined as in Section \ref{sec:selfadjoint} and set $V_n:=P_nV|_{\h_n}$. 
\begin{lemma}\label{lemma:V_n_convergence}
	For $V_n$ defined as above, we have the following
	\begin{enumi}
		\item $(V_n)^* = (V^*)_n$ (i.e. compression to $\h_n$ commutes with taking the adjoint) and
		\item $V_nP_n\to V$ strongly in $\h$.
		\item $V_n^*P_n\to V^*$ strongly in $\h$.
	\end{enumi}
\end{lemma}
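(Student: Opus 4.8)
The plan is to derive all three assertions directly from the definition $V_n:=P_nV|_{\h_n}$ together with the boundedness of $V$ (hence of $V^*$) and the strong convergence $P_n\xrightarrow{s}I$; the relative $T$-compactness hypothesis plays no role in this particular lemma.

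For (i) I would run the same bilinear-form computation as in \eqref{eq:T_n_selfadjoint}. For $x,y\in\h_n$, using $P_ny=y$, $P_nx=x$ and the symmetry of $P_n$,
\[
\langle V_n x, y\rangle = \langle P_nVx, y\rangle = \langle Vx, P_ny\rangle = \langle Vx, y\rangle = \langle x, V^*y\rangle = \langle x, P_nV^*y\rangle = \langle x, (V^*)_ny\rangle .
\]
Since this holds for all $x,y\in\h_n$, the adjoint of $V_n$ in $L(\h_n)$ is $(V^*)_n$, which is (i).

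For (ii), view $V_nP_n$ as the operator $P_nVP_n$ on all of $\h$. For $u\in\h$,
\[
V_nP_nu - Vu = P_nV(P_n-I)u + (P_n-I)Vu .
\]
The first term has norm at most $\|V\|\,\|(P_n-I)u\|\to 0$, and the second term tends to $0$ because $P_n\to I$ strongly; hence $V_nP_nu\to Vu$. Assertion (iii) then follows from (i): since $V_n^* = (V^*)_n$, we have $V_n^*P_n = (V^*)_nP_n = P_nV^*P_n$, and repeating the estimate of the previous step with the bounded operator $V^*$ in place of $V$ yields $V_n^*P_nu\to V^*u$ for every $u\in\h$.

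I do not expect a genuine obstacle here. The only points requiring a little care are bookkeeping — distinguishing operators compressed to $\h_n$ from their extensions by zero to $\h$ — and invoking $V\in L(\h)$ so that both $V$ and $V^*$ are globally bounded, which is precisely what makes the bound $\|V\|\,\|(P_n-I)u\|$ (and its $V^*$ analogue) legitimate.
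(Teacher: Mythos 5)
Your proposal is correct and follows essentially the same route as the paper: (i) via the bilinear-form computation analogous to \eqref{eq:T_n_selfadjoint}, (ii) by combining the boundedness of $V$ with $P_n\xrightarrow{s}I$ (your explicit two-term decomposition just spells out what the paper does), and (iii) by applying (i) and the argument of (ii) to $V^*$.
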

\begin{proof}
	Assertion (i) is easily shown by an analogous calculation to \eqref{eq:T_n_selfadjoint}. 
	
	To see assertion (ii), let $u\in\h$ and note that then $P_nu\to u$ strongly. By continuity of $V$, it immediately follows that $VP_nu\to Vu$ in $\h$. Hence, from the definition of $V_n$ we conclude that 
	\begin{align*}
		V_nP_nu &= P_nV|_{\h_n}P_nu = \underbrace{P_n}_{\to I\text{ strongly}}\underbrace{VP_nu}_{\to Vu} \,\to\, Vu.
	\end{align*}
	Assertion (iii) now immediately follows by combining (i) and (ii).
\end{proof}
The next lemma shows that even the perturbed operators $H_n$ converge in strong resolvent sense.

\begin{lemma}\label{lemma:H_conv_strongly}
	For $H\in\Om_2$ and $H_n=P_nH|_{\h_n}$, one has $H_n\to H$ and $H_n^*\to H^*$ in strong resolvent sense.
\end{lemma}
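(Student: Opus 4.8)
The plan is to deduce the claim from the strong resolvent convergence $T_n\to T$ already established in Proposition \ref{lemma:strong_if_T^2}, by means of the second resolvent identity and the boundedness of $V$; notably, the relative compactness of $V$ and semiboundedness of $T$ are not needed for \emph{this} lemma. Write $H_n=P_nH|_{\h_n}=T_n+V_n$ on $\h_n$ with $V_n=P_nV|_{\h_n}$, so $\|V_n\|\le\|V\|$. I would fix $\gamma>\|V\|$ and set $z_0:=\i\gamma$; since $T$ and all $T_n$ are selfadjoint one has $z_0\in\rho(T)\cap\bigcap_n\rho(T_n)$ with $\|(T-z_0)^{-1}\|\le\gamma^{-1}$ and $\|(T_n-z_0)^{-1}\|\le\gamma^{-1}$. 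Then $\|(T-z_0)^{-1}V\|\le\|V\|/\gamma<1$ and $\|(T_n-z_0)^{-1}V_n\|<1$, so factoring $H-z_0=(T-z_0)\bigl(I+(T-z_0)^{-1}V\bigr)$ (and analogously on $\h_n$) together with a Neumann series shows $z_0\in\rho(H)\cap\bigcap_n\rho(H_n)$ and that the inverses of $I+(T-z_0)^{-1}V$ and of $I+(T_n-z_0)^{-1}V_n$ are bounded, uniformly in $n$, by $C:=(1-\|V\|/\gamma)^{-1}$.

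Next, set $\mathcal R_n:=(H_n-z_0)^{-1}P_n$, $\mathcal R_n^T:=(T_n-z_0)^{-1}P_n$, $\mathcal R:=(H-z_0)^{-1}$ and $\mathcal R^T:=(T-z_0)^{-1}$. Restricting the finite-dimensional identity $(H_n-z_0)^{-1}=(T_n-z_0)^{-1}-(T_n-z_0)^{-1}V_n(H_n-z_0)^{-1}$ to vectors of $\h_n$ and using that $V_nx=P_nVx$ for $x\in\h_n$ gives, as identities of maps $\h\to\h$,
\[
\mathcal R_n=\mathcal R_n^T-\mathcal R_n^T\,V\,\mathcal R_n,\qquad \mathcal R=\mathcal R^T-\mathcal R^T\,V\,\mathcal R .
\]
Subtracting and rearranging yields $(I+\mathcal R_n^T V)(\mathcal R_n-\mathcal R)=(\mathcal R_n^T-\mathcal R^T)(I-V\mathcal R)$, hence
\[
\mathcal R_n-\mathcal R=(I+\mathcal R_n^T V)^{-1}(\mathcal R_n^T-\mathcal R^T)(I-V\mathcal R).
\]
Since $\|(I+\mathcal R_n^T V)^{-1}\|\le C$ for all $n$, for every $u\in\h$ one obtains $\|(\mathcal R_n-\mathcal R)u\|\le C\,\|(\mathcal R_n^T-\mathcal R^T)w\|$ with the fixed vector $w:=(I-V\mathcal R)u$, and the right-hand side tends to $0$ by Proposition \ref{lemma:strong_if_T^2}. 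This is exactly $H_n\to H$ in strong resolvent sense.

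For the adjoints, since $V$ is bounded we have $H^*=T+V^*$ with $V^*\in L(\h)$ and $\|V^*\|=\|V\|$; moreover $(H_n)^*=T_n+V_n^*$ and, by Lemma \ref{lemma:V_n_convergence}(i), $V_n^*=(V^*)_n$, so $(H_n)^*=P_nH^*|_{\h_n}$. Thus the previous paragraph applies verbatim with $V$ replaced by $V^*$ and the same $z_0$, giving $(H_n)^*\to H^*$ in strong resolvent sense.

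I expect the only delicate point to be the bookkeeping of the projections $P_n$ when passing from the resolvent identity on $\h_n$ to the operator identities on $\h$ — in particular the cancellation that lets one write $V$ rather than $V_n$ in the middle factor — together with the uniform-in-$n$ control ensuring $z_0\in\rho(H_n)$ and $\|(I+\mathcal R_n^T V)^{-1}\|\le C$; the remaining estimates are routine.
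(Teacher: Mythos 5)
Your argument is correct, and it is a variant of the paper's own strategy rather than a wholly different one: both proofs fix a purely imaginary point $z_0=\i\gamma$ with $\gamma>\|V\|$ (the paper takes $|\im z|>1+\|V\|$), use a Neumann series to get $z_0\in\rho(H)\cap\bigcap_n\rho(H_n)$ with bounds uniform in $n$, and reduce everything to the strong convergence $(T_n-z)^{-1}P_n\to(T-z)^{-1}$ of Proposition \ref{lemma:strong_if_T^2}; neither uses relative compactness or semiboundedness, exactly as you observe. The difference is the algebraic bookkeeping. The paper works with the two-term decomposition \eqref{eq:H_decomposition} of $(H-z)^{-1}-(H_n-z)^{-1}P_n$ and, to dispose of the first term, invokes the strong convergence $V_nP_n\to V$ of Lemma \ref{lemma:V_n_convergence}(ii). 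You instead use $V_nx=P_nVx$ for $x\in\h_n$ to absorb the projections into $\mathcal R_n^T=(T_n-z_0)^{-1}P_n$ and arrive at the single identity $(I+\mathcal R_n^TV)(\mathcal R_n-\mathcal R)=(\mathcal R_n^T-\mathcal R^T)(I-V\mathcal R)$, so the whole difference is controlled by $C\,\|(\mathcal R_n^T-\mathcal R^T)w\|$ for one fixed vector $w$; this bypasses Lemma \ref{lemma:V_n_convergence}(ii) entirely, while part (i) is still needed (as in the paper) to identify $(H_n)^*=P_nH^*|_{\h_n}$ for the adjoint statement. What your route buys is a slightly leaner proof with a single strong limit to take and one fewer auxiliary lemma; what it costs is nothing essential, since Lemma \ref{lemma:V_n_convergence}(ii) is elementary. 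Two points you leave implicit are worth stating when writing this up: the uniform bound $\|(I+\mathcal R_n^TV)^{-1}\|\le C$ concerns the operator $(T_n-z_0)^{-1}P_nV$ on all of $\h$ rather than $I+(T_n-z_0)^{-1}V_n$ on $\h_n$, but the same estimate $\|(T_n-z_0)^{-1}P_nV\|\le\|V\|/\gamma<1$ justifies it; and Proposition \ref{lemma:strong_if_T^2} is proved at $z=\i$, so, just as the paper does tacitly at its chosen $z$, one should note that strong convergence of the compressed resolvents extends to all non-real $z$, in particular to $z_0=\i\gamma$.
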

\begin{proof}
	Let us first note that for $H=T+V$ as in the definition of $\Om_2$ and $z\in\rho(H_n)\cap\rho(H)$ we have the following decomposition of the resolvents
	\begin{align}
		(H\!-\!z)^{-1}\!\! -\! (H_n\!-\!z)^{-1}\!P_n &= (T\!-\!z)^{-1}\!\left[ I\!+\!V(T\!-\!z)^{-1} \right]^{-1}\!\!\! -\! (T_n\!-\!z)\!\left[ I\!+\!V_n(T_n\!-\!z)^{-1} \right]^{-1}\!\!P_n \nonumber \\[1mm]\nonumber
		&= (T_n\!-\!z)^{-1}\!\Big[ P_n\!\left( I\!+\!V(T\!-\!z)^{-1} \right)^{-1} \!\!\! -\!\left( I\!+\!V_n(T_n\!-\!z)^{-1} \right)^{-1}\!P_n \Big]\\
		& \quad + \Big[ (T_n-z)^{-1}P_n - (T-z)^{-1} \Big]\!\left(I+V(T-z)^{-1}\right)^{-1}\,. \label{eq:H_decomposition}
	\end{align}
	Note that since $V$ is bounded and $T$ is selfadjoint, it is easy to find $z\in\rho(H)$ such that $z\in\rho(H_n)$ for all $n$ by choosing $\im(z)$ large enough. In fact, a standard Neumann series argument shows that for $|\im(z)|>1+\|V\|$ one has
	\begin{equation}\label{eq:uniform_resolvent_bound}
	\begin{split}	
		\left\|(H_n-z)^{-1}\right\|	 &\leq \left\|(T_n-z)^{-1}\left(I+V_n(T_n-z)^{-1}\right)^{-1}\right\|\\
		&\leq \left(\left\|(T_n-z)^{-1}\right\|^{-1}-\|V_n\|\right)^{-1}\\
		&\leq \left(|\im(z)|-\|V\|\right)^{-1}\\
		&\leq 1
	\end{split}
	\end{equation}
	
	For such $z$, in order to estimate $(H-z)^{-1} - (H_n-z)^{-1}P_n$, we will estimate each term on the right hand side of \eqref{eq:H_decomposition} in turn. We start with the second term. For arbitrary $u\in\h$ the term
	\begin{align*}
		\left\|\Big[ (T_n-z)^{-1}P_n - (T-z)^{-1} \Big]\!\left(I+V(T-z)^{-1}\right)^{-1}u\right\|
	\end{align*}
	goes to 0 as $n\to\infty$, since $(T_n-z)^{-1}P_n \to (T-z)^{-1}$ strongly by Proposition \ref{lemma:strong_if_T^2}.
	
	In order to treat the first term on the right hand side of \eqref{eq:H_decomposition}, we use the second resolvent identity. In fact, the term $(T_n-z)^{-1}\big[P_n(I+V(T-z)^{-1})^{-1}-(I+V_n(T_n-z)^{-1} )^{-1}P_n\big]$ is equal to
	\begin{align*}
		(T_n\!-\!z)^{-1}\!\left(I+V_n(T_n\!-\!z)^{-1}\right)^{-1}\!\left[V_n(T_n\!-\!z)^{-1}P_n - P_nV(T\!-\!z)^{-1}\right]\!\left(I+V(T\!-\!z)^{-1}\right)^{-1}.
	\end{align*}
	The norm of this operator, applied to $u\in\h$ is controlled by
	\begin{align*}
		\left\|(T_n\!-\!z)^{-1}\!\left(I+V_n(T_n\!-\!z)^{-1}\right)^{-1}\right\|\left\|\left[V_n(T_n\!-\!z)^{-1}P_n - P_nV(T\!-\!z)^{-1}\right]\!\left(I+V(T\!-\!z)^{-1}\right)^{-1}u\right\|.
	\end{align*}
	By our choice of $z$, the first factor is less than 1 (see eq. \eqref{eq:uniform_resolvent_bound}). Setting $v:=\left(I+V(T-z)^{-1}\right)^{-1}u$ to simplify notation, it remains to estimate
	\begin{align*}
		\left\|\left[V_n(T_n-z)^{-1}P_n - P_nV(T-z)^{-1}\right]v\right\|.
	\end{align*}
	But this clearly converges to 0 as $n\to\infty$, since $V_nP_n\to V$ and $(T_n-z)^{-1}P_n \to (T-z)^{-1}$ strongly by Proposition \ref{lemma:strong_if_T^2} and Lemma \ref{lemma:V_n_convergence} (ii).
	
	Finally, we note that the strong convergence of $H_n^*$ follows immediately from the above calculations and Lemma \ref{lemma:V_n_convergence}.
\end{proof}

\paragraph{The algorithm.}
The algorithm for $\Om_2,\Lambda_2$ is defined almost identically to that in Section \ref{sec:selfadjoint}. Namely, we define
\begin{align}\label{eq:Gamma2}
	\Gamma_n^{(2)}(H):=\left\{ \lambda\in G_n\,\bigg|\, \min\left\{s(H_n-\lambda),s(H_n^*-\overline\lambda)\right\}\leq\f1n \right\}\cup\Gamma_n^{(1)}(T).
\end{align}
Note that we have $\min\{s(M-\lambda),s(M^*-\overline\lambda)\}=\|(M-\lambda)^{-1}\|^{-1}$ for any $n\times n$ matrix $M$ (cf. \cite{Hansen11}). Since we have already shown that $\Gamma_n^{(1)}$ approximates $\sigma(T)$ correctly and that $\sigma(T)=\sigma_{e5}(T)=\sigma_{e5}(H)$, we know that $\Gamma_n^{(2)}$ will not miss anything in $\sigma_{e5}(H)$. Thus, it only remains to prove absence of spectral pollution and spectral inclusion for the discrete set $\sigma(H)\setminus\sigma_{e5}(H)$ for the algorithm 
	$$\tilde\Gamma_n(H):=\left\{ \lambda\in G_n\,\bigg|\, \min\left\{s(H_n-\lambda),s(H_n^*-\overline\lambda)\right\}\leq\f1n \right\}$$ 
This will be done in the remainder of this section. 

However, let us first take a moment to assure that $\Gamma_n^{(2)}$ defines a reasonable algorithm. Clearly, each $\Gamma_n^{(2)}$ depends only on $\big\langle Te_i^{(n)},e_j^{(n)}\big\rangle$ and $\big\langle Ve_i^{(n)},e_j^{(n)}\big\rangle$, $1\leq i,j\leq k_n$. Moreover, by Lemma \ref{lemma:finite_testing} it only requires finitely many algebraic operations on these numbers to determine whether $\lambda\in G_n$ belongs to the set $\left\{ \lambda\,|\, \min\left\{s(H_n-\lambda),s(H_n^*-\overline\lambda)\right\}\leq\f1n \right\}$. Finally, since $\Lambda_2$ contains all matrix elements $\big\langle Te_i^{(n)},e_j^{(n)}\big\rangle$, it follows from the comments made in Section \ref{sec:selfadjoint} that $\Gamma_n^{(1)}$ is an admissible algorithm as well. 
\begin{remark}\label{Remark}
	We note that the choice $\f1n$ as an upper bound for $s(H_n-\lambda)$ in \eqref{eq:Gamma2} is arbitrary. The proof below will show that one could equally well have chosen 
	\begin{align*}
		\Xi_n(H)&:=\left\{ \lambda\in G_n\,\bigg|\, \min\left\{s(H_n-\lambda),s(H_n^*-\overline\lambda)\right\}\leq\f3n \right\}\cup\Gamma_n^{(1)}(T)
	\end{align*}
	instead of $\Gamma^{(2)}_n(H)$. This fact will be used in Section \ref{sec:Schroedinger}.
\end{remark}

\paragraph{Spectral pollution.}
Let us prove that the approximation $\Gamma_n^{(2)}(H)$ does not have spectral pollution for $H\in\Om_2$. To this end, note that again $\tilde \Gamma_n(H)\subset\sigma_\eps(H_n)$ for $\eps>0$ fixed and $n$ large enough. 
According to \cite[Th. 3.6 ii)]{B18}, $\eps$-pseudospectral pollution of the approximation $H_n\to H$ is confined to
\begin{align*}
	\sigma_e\big( (H_n)_{n\in\N} \big)\cup\sigma_e\big( (H_n^*)_{n\in\N} \big)^*\cup\bigcup_{\delta\in(0,\eps]}\Lambda_{e,\delta}\big( (H_n)_{n\in\N} \big).
\end{align*}
Hence, for any sequence $\lambda_n\in\tilde\Gamma_n(H)$ with $\lambda_n\to\lambda\in\C$ we have 
\begin{align*}
	\lambda\in \bigcap_{\eps>0}\left(\sigma_\eps(H)\cup\sigma_e(H_n)_{n\in\N}\cup\sigma_e\big( (H_n^*)_{n\in\N} \big)^*\cup\bigcup_{\delta\in(0,\eps]}\Lambda_{e,\delta}\big( (H_n)_{n\in\N} \big)\right).
\end{align*}
We conclude from Lemmas \ref{lem:LambdaSigma} and \ref{lem:followsfrom4} that in fact
\begin{align*}
	\lambda\in \sigma(H)\cup\sigma_e\big( (H_n)_{n\in\N} \big)\cup\sigma_e\big( (H_n^*)_{n\in\N} \big)^*.
\end{align*}
Furthermore, by \cite[Th. 6.1]{BMC} we have $\sigma_e\big( (H_n)_{n\in\N} \big)\cup\sigma_e\big( (H_n^*)_{n\in\N} \big)^*\subset W_e(H)$ and hence $\lambda\in \sigma(H)\cup W_e(H)$. In order to exclude spectral pollution it only remains to prove $W_e(H)\subset \sigma(H)$.
\begin{lemma}\label{lemms:W_in_sigma}
	For $H=T+V\in\Om_2$ one has $W_e(H)\subset\sigma_e(H)$.
\end{lemma}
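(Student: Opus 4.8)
The plan is to show that the essential numerical range of the perturbed operator $H=T+V$ coincides with that of the unperturbed selfadjoint operator $T$, and then to invoke the characterisation of $W_e(T)$ already used in Section~\ref{sec:selfadjoint}. More precisely, I would first prove $W_e(H)=W_e(T)$ using the $T$-compactness of $V$, and then conclude
\[
W_e(H)=W_e(T)=\operatorname{conv}(\widehat\sigma_e(T))\setminus\{\pm\infty\}=\sigma_e(T)=\sigma_{e5}(T)=\sigma_{e5}(H)\subset\sigma_e(H),
\]
where the middle equalities are exactly the chain established at the end of the proof of Theorem~\ref{th:mainth} together with the hypothesis $\sigma(T)=\sigma_{e5}(T)$ in the definition of $\Om_2$ and Theorem~\ref{th:RS}(ii).

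The heart of the matter is therefore $W_e(H)=W_e(T)$. For the inclusion $W_e(T)\subset W_e(H)$, take $\lambda\in W_e(T)$ with a witnessing sequence $(x_k)\subset\dom(T)=\dom(H)$, $\|x_k\|=1$, $x_k\rightharpoonup0$, $\langle Tx_k,x_k\rangle\to\lambda$. Then $\langle Hx_k,x_k\rangle=\langle Tx_k,x_k\rangle+\langle Vx_k,x_k\rangle$, so it suffices to show $\langle Vx_k,x_k\rangle\to0$. This is where $T$-compactness enters: one may assume (passing to a subsequence and using that $\langle Tx_k,x_k\rangle$ is bounded) that $Tx_k$ is bounded, hence $\|(T-\i)x_k\|$ is bounded; writing $x_k=(T-\i)^{-1}y_k$ with $y_k$ bounded and $y_k\rightharpoonup0$ (since $(T-\i)^{-1}$ is bounded, weak convergence is preserved), the operator $V(T-\i)^{-1}$ is compact, so $Vx_k=V(T-\i)^{-1}y_k\to0$ in norm, giving $|\langle Vx_k,x_k\rangle|\le\|Vx_k\|\to0$. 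The reverse inclusion $W_e(H)\subset W_e(T)$ is symmetric: given a witnessing sequence for $\lambda\in W_e(H)$, boundedness of $\langle Hx_k,x_k\rangle$ forces, via the uniform bound $|\im(z)|-\|V\|\le\|(H-\i)^{-1}\|^{-1}$ type estimate and closedness of $H$ on $\dom(T)$, that $Tx_k=(Hx_k-Vx_k)$ is bounded; then the same compactness argument shows $\langle Vx_k,x_k\rangle\to0$ and hence $\langle Tx_k,x_k\rangle\to\lambda$, i.e. $\lambda\in W_e(T)$.

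The main obstacle I anticipate is the step ``$\langle Hx_k,x_k\rangle$ bounded $\Rightarrow$ $Tx_k$ bounded'': this is not automatic, because a priori $\langle Hx_k,x_k\rangle$ controls only a quadratic-form quantity, not the graph norm of $x_k$. The clean way around it is to note that since $V$ is $T$-compact it is in particular $T$-bounded with relative bound $0$, so $\|Tx_k\|\le\|Hx_k\|+\|Vx_k\|\le\|Hx_k\|+a\|Tx_k\|+b$ for some $a<1$, whence $\|Tx_k\|$ is controlled by $\|Hx_k\|$ — but one still needs $\|Hx_k\|$ bounded, which does not follow from $\langle Hx_k,x_k\rangle\to\lambda$ alone. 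The correct fix is to work with the sequence used to define $W_e(H)$ directly and to recall (cf.~\cite{BMC}) that for an operator with nonempty resolvent set one may, without loss of generality, take the defining sequence of $W_e(H)$ to satisfy the additional boundedness $\sup_k\|(H-z_0)x_k\|<\infty$ for a fixed $z_0\in\rho(H)$; equivalently, $W_e(H)=\{\lim\langle Hx_k,x_k\rangle : x_k=(H-z_0)^{-1}y_k,\ \|y_k\|\text{ bounded},\ y_k\rightharpoonup0\}$, which is precisely the formulation in which the compactness argument applies on both sides. Granting that reformulation (which is part of the machinery of \cite{BMC} already cited in the excerpt), the two inclusions follow symmetrically and the lemma is proved; in fact the argument yields the stronger equality $W_e(H)=W_e(T)$, which is what is actually needed downstream.
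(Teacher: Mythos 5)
Your overall architecture is the same as the paper's: establish $W_e(H)=W_e(T)$ and then use $W_e(T)=\operatorname{conv}\big(\widehat\sigma_e(T)\big)\setminus\{\pm\infty\}\subset\sigma_e(T)=\sigma_e(H)$. The paper obtains the first identity by writing $V=\re(V)+i\im(V)$ with both parts symmetric and $T$-compact and quoting \cite[Th.~4.5]{BMC}; you instead try to prove it by a direct Weyl-sequence argument, and there the proposal has a genuine gap at exactly the point you flag yourself. The ``WLOG'' you invoke --- that the defining sequences of $W_e(H)$ may be taken with $\sup_k\|(H-z_0)x_k\|<\infty$, i.e.\ $x_k=(H-z_0)^{-1}y_k$ with $(y_k)$ bounded and $y_k\rightharpoonup0$ --- is not a result of \cite{BMC} and is false in general. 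Already for a semibounded selfadjoint $T$ with $\sigma_e(T)=\{0\}$ which is unbounded above one has $W_e(T)=[0,\infty)$ by \cite[Th.~3.8]{BMC}, yet every normalized sequence with $x_k\rightharpoonup 0$ and $\|Tx_k\|\le C$ forces $\langle Tx_k,x_k\rangle\to0$: with $P_R=\mathbf 1_{[-R,R]}(T)$ and $Q_\eps=\mathbf 1_{[-\eps,\eps]}(T)$ one gets $\|(I-P_R)x_k\|\le C/R$, $P_R-Q_\eps$ has finite rank, hence $\limsup_k|\langle Tx_k,x_k\rangle|\le \eps+C^2/R$ for all $\eps,R>0$. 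So the graph-norm-bounded variant of the essential numerical range is in general strictly smaller than $W_e$, and your compactness argument (which genuinely needs $\|Tx_k\|$ or $\|Hx_k\|$ bounded before $V(T-i)^{-1}$ compactness can kill $\langle Vx_k,x_k\rangle$) only controls those $\lambda\in W_e(H)$ attained along graph-bounded sequences; the inclusion $W_e(H)\subset W_e(T)$, which is the one the lemma actually needs, is therefore not established. The same unjustified step occurs in your forward inclusion (``$\langle Tx_k,x_k\rangle$ bounded, hence $Tx_k$ bounded after passing to a subsequence''), which fails for the same reason, although that direction is not needed here. Handling non-graph-bounded sequences is precisely the subtlety that \cite[Th.~4.5]{BMC} (with $T$ semibounded and the symmetric decomposition of $V$) is cited to resolve, so the clean repair is to quote it as the paper does rather than rerun the sequence argument.

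Two smaller remarks. First, the last link in your chain, $\sigma_{e5}(H)\subset\sigma_e(H)$, is backwards as a general statement, since $\sigma_{e2}\subset\sigma_{e5}$ always; it is true here only because $\sigma_{e2}$ is itself invariant under relatively compact perturbations, so $\sigma_{e2}(H)=\sigma_{e2}(T)=\sigma_{e5}(T)=\sigma_{e5}(H)$, and that deserves a word. Second, note that under the convexity hypothesis the forward inclusion $W_e(T)\subset W_e(H)$ can be fixed without any WLOG, since then $W_e(T)=\sigma_e(T)$ and each point of $\sigma_e(T)$ admits a singular Weyl sequence, which is automatically graph-bounded --- but no such fix is available for the direction $W_e(H)\subset W_e(T)$, because the structure of $W_e(H)$ is exactly what is unknown.
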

\begin{proof}
	Let $H=T+V$ with $T$ selfadjoint, semibounded and $V\in L(\h)$ such that $V,\,V^*$ are $T$-compact. Then denoting $\re(V):=\f12(V+V^*)$ and $\im(V):=\f{1}{2i}(V-V^*)$ we have that $V=\re(V)+i\im(V)$ with $\re(V),\,\im(V)$ relatively compact w.r.t. $T$. 
	Applying \cite[Th. 4.5]{BMC} we conclude that $W_e(H)=W_e(T)$.

But now by our assumptions on $T$, we can see from \cite[Th. 3.8]{BMC} that 
$$W_e(T)=\operatorname{conv}\big(\widehat\sigma_e(T)\big)\setminus\{\pm\infty\}\subset\sigma_e(T) = \sigma_e(H).$$
\end{proof}
Overall we have shown that for any sequence $\lambda_n\in\tilde\Gamma_n(H)$ which converges to some $\lambda\in\C$ we necessarily have $\lambda\in\sigma(H)$, in other words, spectral pollution does not exist.

\paragraph{Spectral inclusion.}
It remains to show that the approximation $(\Gamma_n^{(2)}(H))$ is spectrally inclusive, i.e. that for any $\lambda\in\sigma(H)$ there exists a sequence $\lambda_n\in\Gamma_n^{(2)}(H)$ such that $\lambda_n\to\lambda$. As explained above, the existence of such a sequence is already guaranteed for all $\lambda\in\sigma_{e5}(H)$. 
\begin{lemma}
	For every $\lambda\in\sigma(H)\setminus\sigma_{e5}(H)$ there exists a sequence $\lambda_n\in\tilde\Gamma(H)$ with $\lambda_n\to\lambda$.
\end{lemma}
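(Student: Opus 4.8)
The plan is to show that every $\lambda\in\sigma(H)\setminus\sigma_{e5}(H)$ is a limit of \emph{exact} eigenvalues of the truncations $H_n$, and then to round those eigenvalues to nearby grid points in $G_n$. Since $\sigma_{e5}(H)=\sigma_{e5}(T)=\sigma(T)\subset\R$, Theorem~\ref{th:EE} shows that such a $\lambda$ is an isolated point of $\sigma(H)$ and an eigenvalue of finite algebraic multiplicity; hence, fixing $r>0$ small enough that $\overline{B_{2r}(\lambda)}\cap\sigma(H)=\{\lambda\}$, the Riesz projection $\Pi:=\frac{1}{2\pi\i}\oint_{|z-\lambda|=\delta}(z-H)^{-1}\,dz$ is well defined and of rank $\geq1$ for every $\delta\in(0,2r)$, and independent of $\delta$. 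I also record that $\rho(H)=\C\setminus\bigl(\sigma(T)\cup\{\lambda_1,\lambda_2,\dots\}\bigr)$ is connected, being the complement of a closed subset of $\R$ from which a discrete set of points has been removed; in particular the reference point $z_0$ (with $|\im z_0|>1+\|V\|$) of Lemma~\ref{lemma:H_conv_strongly} lies in the same component of $\rho(H)$ as every small punctured disc around $\lambda$.

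The core step is to upgrade the strong resolvent convergence $(H_n-z_0)^{-1}P_n\to(H-z_0)^{-1}$ of Lemma~\ref{lemma:H_conv_strongly} to $(H_n-z)^{-1}P_n\xrightarrow{s}(H-z)^{-1}$ for \emph{every} $z\in\rho(H)$. First I would extract from the spectral pollution analysis already carried out the uniform bound $\sup_n\sup_{z\in K}\|(H_n-z)^{-1}\|<\infty$ for every compact $K\subset\rho(H)$: if this failed there would be $n_j\to\infty$ and $z_{n_j}\to z_\infty\in K\subset\rho(H)$ with $\|(H_{n_j}-z_{n_j})^{-1}\|\to\infty$, so $z_{n_j}\in\sigma_{\eps_j}(H_{n_j})$ with $\eps_j\to0$, whence (exactly as in the proof of absence of spectral pollution above, using Lemmas~\ref{lem:LambdaSigma}, \ref{lem:followsfrom4} and \ref{lemms:W_in_sigma}) $z_\infty\in\sigma(H)\cup W_e(H)=\sigma(H)$, a contradiction; in particular $K\subset\rho(H_n)$ for all large $n$. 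Given these bounds, strong resolvent convergence propagates from $z_0$ along any path in $\rho(H)$ by a chain of Neumann series: on a disc $B_\rho(c)\subset\rho(H)$ whose radius satisfies $\rho<\bigl(\sup_n\|(H_n-c)^{-1}\|\bigr)^{-1}$ the expansions $(H_n-z)^{-1}=\sum_{k\ge0}(c-z)^k(H_n-c)^{-(k+1)}$ converge uniformly in $z$ and $n$, converge termwise strongly, and hence transport strong resolvent convergence from $c$ to all of $B_\rho(c)$; covering a path from $z_0$ to any given point of $\rho(H)$ by finitely many such discs gives the claim. (If a suitable spectral-exactness statement for Galerkin truncations outside the limiting essential numerical range is available in the literature one could invoke it instead.) I expect this propagation, together with correctly isolating the uniform resolvent bounds, to be the main technical obstacle; the remaining steps are soft.

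With this in hand, suppose for contradiction that $\dist(\lambda,\sigma(H_n))$ does not tend to $0$. Then there is $\delta\in(0,r]$ and a subsequence $(n_j)$ with $\overline{B_\delta(\lambda)}\cap\sigma(H_{n_j})=\emptyset$, so $(z-H_{n_j})^{-1}$ is analytic on a neighbourhood of $\overline{B_\delta(\lambda)}$. The circle $|z-\lambda|=\delta$ is a compact subset of $\rho(H)$, so on it we have both strong resolvent convergence and the uniform bound above, and dominated convergence for the Bochner integral yields, for every $u\in\h$,
\[
0=\frac{1}{2\pi\i}\oint_{|z-\lambda|=\delta}(z-H_{n_j})^{-1}P_{n_j}u\,dz\ \longrightarrow\ \frac{1}{2\pi\i}\oint_{|z-\lambda|=\delta}(z-H)^{-1}u\,dz=\Pi u,
\]
the left-hand integral vanishing by Cauchy's theorem. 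Hence $\Pi=0$, contradicting $\operatorname{rank}\Pi\ge1$. Therefore $\dist(\lambda,\sigma(H_n))\to0$, and we may choose eigenvalues $\mu_n\in\sigma(H_n)$ with $\mu_n\to\lambda$.

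It remains to round. Since $s(H_n-\mu_n)=\|(H_n-\mu_n)^{-1}\|^{-1}=0$, the map $\mu\mapsto s(H_n-\mu)$ is $1$-Lipschitz, and the bounded sequence $(\mu_n)$ eventually lies well inside $B_n(0)$, for all large $n$ there is $\lambda_n\in G_n$ with $|\lambda_n-\mu_n|<\tfrac1n$; then $s(H_n-\lambda_n)\le|\lambda_n-\mu_n|<\tfrac1n$, and because $s(H_n^*-\overline{\lambda_n})=s(H_n-\lambda_n)$ for a square matrix, $\lambda_n\in\tilde\Gamma_n(H)$. Finally $|\lambda_n-\lambda|\le|\lambda_n-\mu_n|+|\mu_n-\lambda|\to0$, which is the assertion.
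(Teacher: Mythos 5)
Your argument is correct, but it reaches the key intermediate fact --- the existence of eigenvalues $\mu_n\in\sigma(H_n)$ with $\mu_n\to\lambda$ --- by a genuinely different route than the paper. The paper's proof is short: it notes that $\lambda$ is isolated (Theorem \ref{th:EE}), that $\lambda\notin\sigma_e\big((H_n)_{n\in\N}\big)\cup\sigma_e\big((H_n^*)_{n\in\N}\big)^*$ because this set is contained in $\sigma_e(H)$ (as shown in Lemma \ref{lemms:W_in_sigma}), and then invokes the Galerkin spectral-inclusion result \cite[Th. 2.3 i)]{B18} together with the strong resolvent convergence of Lemma \ref{lemma:H_conv_strongly}; the grid-rounding step at the end is identical to yours. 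You instead reprove the needed inclusion from scratch: you extract locally uniform resolvent bounds on compact subsets of $\rho(H)$ by a contradiction argument running the no-pollution machinery (Lemmas \ref{lem:LambdaSigma}, \ref{lem:followsfrom4}, \ref{lemms:W_in_sigma}) on a sequence of $\eps_j$-pseudospectral points with $\eps_j\to 0$, propagate strong resolvent convergence from the reference point $z_0$ to all of $\rho(H)$ via Neumann series along paths (note the expansion should read $(H_n-z)^{-1}=\sum_{k\ge 0}(z-c)^k(H_n-c)^{-(k+1)}$, not $(c-z)^k$ --- a harmless sign slip), and then use a Riesz-projection contour argument to force $\dist(\lambda,\sigma(H_n))\to 0$. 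This is essentially a self-contained proof, in the present setting, of the cited result of \cite{B18}; what it buys is independence from that reference (and it makes explicit where the exclusion of the limiting essential spectra enters, namely through the uniform-bound contradiction), at the cost of considerably more length. Your rounding step, including the $1$-Lipschitz bound on the smallest singular value and the identity $s(H_n^*-\overline{\lambda_n})=s(H_n-\lambda_n)$, matches the paper's conclusion that $\lambda_n\in G_n$ with $|\lambda_n-\mu_n|<\f1n$ lies in $\tilde\Gamma_n(H)$.
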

\begin{proof}
	First note that by Theorem \ref{th:EE} $\lambda$ is an isolated point. Moreover, we have seen in the proof of Lemma \ref{lemms:W_in_sigma} that $\sigma_e\big( (H_n)_{n\in\N} \big)\cup\sigma_e\big( (H_n^*)_{n\in\N} \big)^*\subset\sigma_e(H)$ and hence $\lambda$ does not belong to this set either. From Lemma \ref{lemma:H_conv_strongly} and \cite[Th. 2.3 i)]{B18} we conclude that there exists a sequence $\mu_n\in\sigma(H_n)$ with $\mu_n\to\lambda$. 
	
	Now, by definition of $G_n$, for each $n$ there exists $\lambda_n\in G_n$ such that $|\mu_n-\lambda_n|<\f1n$ and hence $\|(H_n-\lambda_n)^{-1}\|_{L(\h_n)}\geq n$ which implies $\lambda_n\in\tilde\Gamma_n(H)$. Since $|\mu_n-\lambda_n|\to 0$ and $\mu_n\to\lambda$, it follows that $\lambda_n\to\lambda$. 
\end{proof}

\paragraph{Conclusion.}
Overall we have shown that 
\begin{itemize}
	\item[(a$'$)] If $\lambda_n\in\Gamma_n^{(2)}(H)$ and $\lambda_n\to\lambda$, then $\lambda\in\sigma(H)$.
	\item[(b$'$)] If $\lambda\in\sigma(H)$, then there exist $\lambda_n\in\Gamma_n^{(2)}(H)$ with $\lambda_n\to\lambda$.
\end{itemize}
As in Proposition \ref{prop:AW_convergence} this implies $d_{\mathrm{AW}}\big(\Gamma_n^{(2)}(H),\sigma(H)\big)\to 0$.

\section{Application to Schr\"odinger Operators}\label{sec:Schroedinger}

In this section we will apply the results of Sections \ref{sec:selfadjoint} and \ref{sec:Perturbations} to Schr\"odinger operators on $L^2(\R^d)$. More specifically, for fixed $M>0$ and $C\subset\R^d$ compact we define
\begin{align}\label{eq:def_Omega3}
	\Om_3 &:= \left\{ -\Delta+V\,\big|\,V\in \mathcal C^1(\R,\C),\, \supp(V)\subset C,\, \|V\|_{\mathcal C^1}\leq M\right\},
\end{align}
where $\|V\|_{\mathcal C^1} = \|V\|_\infty+\|\nabla V\|_\infty$. Since in the above definition the potential functions $V$ are compactly supported and bounded by $M$, every $H\in\Om_3$ is a relatively compact perturbation of the free Laplacian with domain $H^2(\R^d)$. In fact, our assumptions on $V$ have been chosen such that every $H\in\Om_3$ even satisfies all conditions formulated in the set $\Om_2$ in Theorem \ref{th:perturbation}.

In order to define the computational problem, we choose a finite lattice in $\R^d$
\begin{align*}
	L_n := \left\{\f in \;\Big|\;i\in \mathbb Z^d, |i|<n \right\}.
\end{align*} 
Moreover, let $\h_n$ denote the subspace of $L^2(\R^d)$ spanned by all characteristic  functions of cubes of edge length $\f1n$ with centres inside a ball of radius $n$:
\begin{align*}
	\widehat\h_n:=\operatorname{span}\left\{ \chi_{i+[0,\f1n)^d}\,\Big|\,i\in L_n \right\}
\end{align*}
It is easily seen by smooth approximation that $P_{\widehat\h_n}\to I$ strongly in $L^2(\R^d)$. However, none of the basis functions $\chi_{i+[0,\f1n)^d}$ are contained in the domain of $-\Delta$. In order to circumvent this, the space we will actually work with will be
\begin{align}\label{eq:Hn_Def}
	\h_n &:=\operatorname{span}\left\{ \widehat\chi_{i+[0,\f1n)^d}\,\Big|\, i\in L_n \right\},
\end{align}
where the hat denotes the Fourier transform in $L^2(\R^d)$. For any enumeration $i_k$ of the set $L_n$, we define
\begin{align*}
	e_k^{(n)}:=n^{\f d2}\cdot\widehat\chi_{i_k+[0,\f1n)^d},
\end{align*}
where the normalisation constant $n^{\f d2}$ is chosen such that $\big\|e_k^{(n)}\big\|_{L^2(\R^d)}=1$ for all $n\in\N$.
These are smooth functions in $L^2(\R^d)$ and it is easily checked that their first and second derivatives are again in $L^2(\R^d)$. We note that the functions $e_k^{(n)}$ can be calculated explicitly. Indeed, one has
\begin{align*}
	e_k^{(n)}(\xi) = \left(\f{n}{2\pi}\right)^{\f d2}\prod_{j=1}^d\f{e^{\text{i}\xi_j((i_k)_j+\f1n)} - e^{\text{i}\xi_j (i_k)_j}}{\xi_j},
\end{align*}
where $(i_k)_j$ denotes the $j$'th component of the vector $i_j$ and $\xi=(\xi_1,\dots,\xi_d)\in\R^d$.
Using this explicit representation, it can be easily seen that we have the following.
\begin{lemma}\label{lemma:e_k-Bound}
	For each $n\in\N$ one has  
	\begin{align*}
		\big\|e_k^{(n)}\big\|_\infty, \,\big\|\nabla e_k^{(n)}\big\|_\infty\leq (2\pi)^{-\f d2}dn^{3-\f{d}{2}}
	\end{align*}
	for all $k\in\{1,\dots,n\}$.
\end{lemma}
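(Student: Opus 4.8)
The plan is to read both estimates off the closed-form expression for $e_k^{(n)}$, bounding the $d$-fold product factor by factor. Write $a_j:=(i_k)_j+\f1n$ and $b_j:=(i_k)_j$, so the $j$-th factor is $g_j(\xi):=\f{e^{\i a_j\xi}-e^{\i b_j\xi}}{\xi}$ (understood at $\xi=0$ by continuity) and $e_k^{(n)}(\xi)=(n/2\pi)^{d/2}\prod_{j=1}^d g_j(\xi_j)$ for $\xi=(\xi_1,\dots,\xi_d)$. Since $i_k\in L_n\subset\mathbb Z^d$ with $|i_k|<n$, we have $|b_j|=|(i_k)_j|\le n-1$ and hence $|a_j|\le n$ for every $j$. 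For the sup bound, factor the unimodular phase $e^{\i b_j\xi}$ out of $g_j$ and use $|e^{\i t}-1|\le|t|$ to get $|g_j(\xi)|=\f{|e^{\i\xi/n}-1|}{|\xi|}\le\f1n$; taking the product over $j$ and multiplying by the normalisation gives $\|e_k^{(n)}\|_\infty\le(2\pi)^{-d/2}n^{-d/2}\le(2\pi)^{-d/2}d\,n^{3-d/2}$, using $n\ge1$ and $d\ge1$.

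For the gradient, the only factor that must be differentiated is $g_l$, and the clean way to control $g_l'$ is via the integral representation $g_j(\xi)=\i\int_{b_j}^{a_j}e^{\i t\xi}\,dt$, which is entire in $\xi$; differentiating under the integral sign yields $g_j'(\xi)=-\int_{b_j}^{a_j}t\,e^{\i t\xi}\,dt$, whence $|g_j'(\xi)|\le\int_{b_j}^{a_j}|t|\,dt\le(a_j-b_j)\max\{|a_j|,|b_j|\}\le\f1n\cdot n=1$. Since $\partial_{\xi_l}e_k^{(n)}(\xi)=(n/2\pi)^{d/2}\,g_l'(\xi_l)\prod_{j\ne l}g_j(\xi_j)$, combining $|g_l'|\le1$ with $|g_j(\xi_j)|\le\f1n$ on the remaining $d-1$ factors gives $|\partial_{\xi_l}e_k^{(n)}(\xi)|\le(2\pi)^{-d/2}n^{1-d/2}$ for each $l$; summing the squares over $l=1,\dots,d$ and taking the square root produces a factor $\sqrt d\le d$, so $\|\nabla e_k^{(n)}\|_\infty\le(2\pi)^{-d/2}d\,n^{1-d/2}\le(2\pi)^{-d/2}d\,n^{3-d/2}$.

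I do not expect a genuine obstacle here. The only subtlety is that each factor $g_j$ has a removable singularity at the origin, so one should not differentiate via the quotient rule and then bound the result; the integral representation sidesteps this entirely. The exponent $3-d/2$ claimed in the lemma is deliberately generous — the argument above actually gives the sharper exponent $1-d/2$ — so even the crudest estimates on $a_j$, $b_j$ and on the differences $e^{\i t}-1$ are more than enough.
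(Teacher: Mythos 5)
Your argument is correct and is essentially the paper's own proof: the paper also reads both bounds directly off the explicit product formula, bounding each factor by $\frac1n$ and the differentiated factor by $\int_{(i_k)_j}^{(i_k)_j+1/n}|t|\,dt$ (its stated bound $\frac12\big(((i_k)_j+\frac1n)^2-(i_k)_j^2\big)$ is exactly this integral), using $i_k\in L_n\subset B_n(0)$, with the generous exponent $3-\frac d2$ absorbing the slack just as in your version. The only blemish is your parenthetical ``$L_n\subset\mathbb Z^d$ with $|i_k|<n$'' (the lattice consists of the points $i/n$, not of integer vectors), but all that is needed is $\max\{|(i_k)_j|,|(i_k)_j+\frac1n|\}\leq n$, which holds, so the estimate is unaffected.
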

\begin{proof}
	From the definition of $e_k^{(n)}$ it follows by direct calculation that
	\begin{align*}
		\big\|e_k^{(n)}\big\|_\infty &< (2\pi n)^{-\f d2},\\
		\big\|\del_j e_k^{(n)}\big\|_\infty &< (2\pi)^{-\f d2}\f{n^{-\f d2+1}}{2}\left( \left((i_k)_j+\tfrac1n\right)^2 - (i_k)_j^2 \right)
	\end{align*}
	from which the assertion follows. Note that the bound in the second equation can be made independent of $k$, because $i_k\in L_n\subset B_n(0)$ for all $k$.
\end{proof}
The information accessible to the algorithm will be the set 
\begin{equation}\label{eq:def_Lambda_3}
\begin{split}
	\Lambda_3 &:= \left\{ \rho_x \,|\,x\in\R^d\right\}\cup \left\{e_k^{(n)}(i)\,\Big|\,i\in l^{-1}\mathbb Z^d,\, l\in\N,\;k\in\{1,\dots,n\},\;n\in\N\right\}\\
	&\qquad\qquad\qquad\qquad\qquad \cup \bigg\{\tfrac{n\delta_{mk}}{3}\sum_{j=1}^d\left( \left((i)_j+\tfrac1n\right)^3 - (i)_j^3 \right)\,\Big|\, i\in L_n,\; n\in\N  \bigg\}
\end{split}
\end{equation}
where $\rho_x(V)=V(x)$ are the evaluation functionals and $e_k^{(n)}(i)$ denote constant functions that map $V$ to the number $e_k^{(n)}(i)$. The meaning of the constants $\tfrac{n\delta_{mk}}{3}\sum_{j=1}^d\big(\big((i)_j+\tfrac1n\big)^3 - (i)_j^3 \big)$ will become clear later on. 

Together, $\Om_3$ and $\Lambda_3$ define a computational problem $(\Om_3,\Lambda_3,\sigma(\cdot))$. The main result of this section is the following.
\begin{theorem}\label{th:Schroedinger}
	For $\Om_3$ and $\Lambda_3$ defined as above, we have $\SCI\big(\Om_3,\Lambda_3,\sigma(\cdot)\big)=1$.
\end{theorem}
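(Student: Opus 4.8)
The plan is to reduce Theorem \ref{th:Schroedinger} to Theorem \ref{th:perturbation} by verifying that every $H=-\Delta+V\in\Om_3$ satisfies the hypotheses defining $\Om_2$, and then to show that the abstract algorithm $\Gamma_n^{(2)}$ (or rather the enlarged version $\Xi_n$ from Remark \ref{Remark}) can be implemented using only the information in $\Lambda_3$. The first part is essentially bookkeeping: take $T=-\Delta$ with $\dom(T)=H^2(\R^d)$, which is selfadjoint and semibounded (indeed nonnegative), has $\sigma(T)=[0,\infty)=\sigma_{e5}(T)$ with $\widehat\sigma_e(T)=[0,\infty)\cup\{+\infty\}$ convex; since $V$ is bounded with compact support, both $V$ and $V^*=\overline V$ are $(-\Delta)$-compact (multiplication by a compactly supported $L^\infty$ function composed with $(-\Delta-z)^{-1}$ is compact on $L^2(\R^d)$ by Rellich). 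I would also need $\bigcup_n\h_n$ to be a core of $-\Delta$: since the $\widehat\chi_{i+[0,1/n)^d}$ span a dense set of Schwartz-like functions in the Fourier picture, and one checks their $\Delta$-images converge appropriately, $\bigcup_n\h_n$ is a core (this follows from the explicit formulas and the fact that $P_{\widehat\h_n}\to I$ strongly together with boundedness of the relevant second-derivative quantities).

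The substantive part is the implementation. The algorithm $\Xi_n$ needs the matrix elements $\langle T_n e_k^{(n)},e_m^{(n)}\rangle = \langle -\Delta e_k^{(n)},e_m^{(n)}\rangle$ and $\langle V_n e_k^{(n)},e_m^{(n)}\rangle = \langle V e_k^{(n)},e_m^{(n)}\rangle$, each to be computed from finitely many elements of $\Lambda_3$ by finitely many arithmetic operations. For the Laplacian part, working on the Fourier side $-\widehat\Delta$ is multiplication by $|\xi|^2$, and $e_k^{(n)}$ in the $\xi$-variable is the (normalized) Fourier transform $\widehat\chi_{i_k+[0,1/n)^d}$, so $\langle -\Delta e_k^{(n)}, e_m^{(n)}\rangle = n^d\int |\xi|^2 \widehat\chi_{i_k+Q}(\xi)\overline{\widehat\chi_{i_m+Q}(\xi)}\,d\xi$; by Plancherel this equals $n^d\int \chi_{i_k+Q}(x)\,\overline{(-\Delta\chi_{i_m+Q})(x)}\,dx$ — but that is not literally valid since $\chi$ is not in $H^2$. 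The clean route is to integrate by parts once: $\langle -\Delta e_k^{(n)},e_m^{(n)}\rangle$, since in physical space $e_k^{(n)}$ has $\Delta$-image in $L^2$, equals a boundary/volume expression that the author has pre-packaged — this is exactly the meaning of the mysterious constants $\tfrac{n\delta_{mk}}{3}\sum_j((i_j+1/n)^3-i_j^3)$ in $\Lambda_3$: these are precisely $\langle -\Delta e_k^{(n)},e_m^{(n)}\rangle$ (nonzero only when $k=m$, because distinct cubes give orthogonal, disjointly-Fourier-supported... no — rather the cross terms vanish by a direct computation and the diagonal term evaluates to that cubic sum). So I would simply verify by the explicit formula for $e_k^{(n)}(\xi)$ that $\langle -\Delta e_k^{(n)},e_m^{(n)}\rangle$ equals that constant, hence it is literally supplied in $\Lambda_3$.

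For the potential matrix elements $\langle V e_k^{(n)},e_m^{(n)}\rangle = \int_{\R^d} V(\xi)\,e_k^{(n)}(\xi)\,\overline{e_m^{(n)}(\xi)}\,d\xi$ (the inner product is over the $\xi$-domain; note $V$ here acts as multiplication on $L^2(\R^d_\xi)$ — wait, $V$ is a potential on the physical space, so in fact I must be careful: $V$ multiplies in $x$, and $e_k^{(n)}$ are defined as Fourier transforms, so $\langle V e_k^{(n)},e_m^{(n)}\rangle_{L^2(\R^d)}$ with the convention that $\R^d$ here is the physical space means $\int V(x)\widehat\chi_{i_k+Q}(x)\overline{\widehat\chi_{i_m+Q}(x)}n^d\,dx$). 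This integral cannot be computed exactly from finitely many point evaluations $V(x)$, so the key idea must be to approximate it by a Riemann sum over the lattice $l^{-1}\mathbb Z^d$, using the values $\rho_x(V)=V(x)$ for $x\in l^{-1}\mathbb Z^d$ and the pre-supplied values $e_k^{(n)}(i)$ for $i\in l^{-1}\mathbb Z^d$. The error of this quadrature is controlled by $\|V\|_{\mathcal C^1}\le M$ together with the Lipschitz bound on $e_k^{(n)}\overline{e_m^{(n)}}$ from Lemma \ref{lemma:e_k-Bound} and the decay of $e_k^{(n)}$ at infinity (the $1/|\xi|$ factors give enough integrability to truncate the domain). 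So I would define, for fixed $n$, an inner approximating algorithm: choose $l=l(n)$ large and a truncation radius $R=R(n)$, form $\tilde V^{(n)}_{km}:=n^d l^{-d}\sum_{i\in l^{-1}\mathbb Z^d\cap B_R} V(i)\,e_k^{(n)}(i)\,\overline{e_m^{(n)}(i)}$, and show $|\tilde V^{(n)}_{km}-\langle V e_k^{(n)},e_m^{(n)}\rangle|\le C(n)/l + (\text{truncation tail})$, uniformly in $V\in\Om_3$, which tends to $0$ as $l,R\to\infty$ for fixed $n$. The main obstacle is precisely making this quadrature error estimate uniform over $\Om_3$ and controlling it quantitatively enough that, after choosing $l(n),R(n)$ growing fast enough with $n$, the perturbed truncated matrices $\tilde H_n := T_n^{\Lambda_3} + \tilde V^{(n)}$ differ from the true $H_n=T_n+V_n$ by less than, say, $1/n$ in norm; then one invokes Remark \ref{Remark}, since $\|\tilde H_n - H_n\|\le 1/n$ guarantees that a $\lambda$ with $s(\tilde H_n-\lambda)\le 1/n$ has $s(H_n-\lambda)\le 2/n$ and vice versa, so the algorithm built from $\Lambda_3$-data computes the same sets up to the $\f3n$-slack permitted by $\Xi_n$, hence still has no spectral pollution and remains spectrally inclusive.

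Putting it together: the algorithm $\Gamma_n^{(3)}:\Om_3\to\{\text{closed subsets of }\C\}$ is
\begin{align*}
	\Gamma_n^{(3)}(H):=\left\{\lambda\in G_n\ \Big|\ \min\big\{s(\tilde H_n-\lambda),\,s(\tilde H_n^*-\overline\lambda)\big\}\leq\tfrac2n\right\}\cup\Gamma_n^{(1)}(T_n^{\Lambda_3}),
\end{align*}
where $\tilde H_n,T_n^{\Lambda_3}$ are the matrices assembled from $\Lambda_3$ as above with parameters $l(n),R(n)$ chosen so that $\|\tilde H_n-H_n\|\le 1/n$. Each $\Gamma_n^{(3)}$ uses only finitely many elements of $\Lambda_3$ and finitely many arithmetic operations (by Lemma \ref{lemma:finite_testing}), so it is an admissible arithmetic tower of height one; and by the comparison with $\Xi_n$ from Remark \ref{Remark} together with Theorem \ref{th:perturbation} it satisfies properties (a$'$), (b$'$) with $\sigma(H)$, whence $d_{\mathrm{AW}}(\Gamma_n^{(3)}(H),\sigma(H))\to 0$ by the argument of Proposition \ref{prop:AW_convergence}. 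This gives $\SCI(\Om_3,\Lambda_3,\sigma(\cdot))\le 1$, and since $\Om_3$ contains operators with nontrivial spectrum the index is exactly $1$. The one genuinely delicate point, as noted, is the uniform-in-$\Om_3$ quadrature estimate; everything else is either packaged into $\Lambda_3$ by design (the Laplacian matrix entries and the sample values $e_k^{(n)}(i)$) or follows from the already-proven Theorem \ref{th:perturbation}.
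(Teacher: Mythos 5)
Your proposal follows essentially the same route as the paper: verify $\Om_3\subset\Om_2$ (with $T=-\Delta$, $V$ relatively compact, and $\bigcup_n\h_n$ a core — the paper proves this core property carefully as a separate lemma), read off the exactly diagonal Laplacian matrix entries from the cubic constants packaged in $\Lambda_3$, approximate $\langle Ve_k^{(n)},e_m^{(n)}\rangle$ by a lattice quadrature with error uniform over $\Om_3$ via $\|V\|_{\mathcal C^1}\le M$ and the sup/gradient bounds on $e_k^{(n)}$, choose $l(n)$ so the resulting matrices differ from $H_n$ by less than $\tfrac1{2n}$ in norm, and sandwich the new algorithm between $\Gamma^{(2)}_n$ and $\Xi_n$ using the resolvent-norm comparison and Remark \ref{Remark}. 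The only cosmetic difference is your extra truncation radius $R(n)$, which is unnecessary since $\supp V\subset C$ already confines the quadrature to a fixed compact set.
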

The proof of Theorem \ref{th:Schroedinger} will be by reduction to Theorem \ref{th:perturbation}. In order to accomplish this, we need to be able to compute the matrix elements $\left\langle (-\Delta+V)e_i,e_j \right\rangle$ \emph{by performing only a finite number of algebraic operations on a finite number of values of $V$}. This will be the main difficulty.

\subsection{Proof of Theorem \ref{th:Schroedinger}}

 We first note that by the unitarity of the Fourier transform, the $e_k^{(n)}$ still form an orthonormal basis of $\h_n$ and we still have $P_{\h_n}\to I$ strongly in $L^2(\R^d)$. The last claim follows immediately from the equality
\begin{align*}
	\left\|\sum_k\left\langle f, n^{\f d2}\chi_{i_k+[0,\f1n)^d}\right\rangle n^{\f d2}\chi_{i_k+[0,\f1n)^d} - f\right\|_{L^2(\R^d)} = \left\|\sum_k\left\langle \hat f, e_k\right\rangle e_k - \hat f\right\|_{L^2(\R^d)}.
\end{align*}
Next, we show that the spaces $\h_n$ defined in \eqref{eq:Hn_Def} are indeed a reasonable choice for the problem at hand. More precisely, we have
\begin{lemma}\label{lemma:H_n_is_core}
	The union $\bigcup_{n\in\N}\h_n$ is a core for $-\Delta$.
\end{lemma}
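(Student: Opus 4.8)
The plan is to pass to the Fourier side, where $-\Delta$ is multiplication by $|\xi|^2$ and the graph norm of $-\Delta$ becomes a weighted $L^2$-norm, and then to reduce the core property to an elementary density statement for step functions. Being a core for the self-adjoint operator $-\Delta$ (with $\dom(-\Delta)=H^2(\R^d)$) means exactly that $\bigcup_n\h_n$ is dense in $\dom(-\Delta)$ for the graph norm $\|u\|_{-\Delta}:=\big(\|u\|^2+\|\Delta u\|^2\big)^{1/2}$; that $\bigcup_n\h_n\subset\dom(-\Delta)$ has already been observed. The Fourier transform $\mathcal F$ restricts to a unitary isomorphism from $\big(\dom(-\Delta),\|\cdot\|_{-\Delta}\big)$ onto $(X,\|\cdot\|_X)$, where
\[
	X:=\set{g\in L^2(\R^d)\,:\,|\xi|^2 g\in L^2(\R^d)},\qquad \|g\|_X^2:=\int_{\R^d}\big(1+|\xi|^4\big)|g(\xi)|^2\,d\xi .
\]
Since $\mathcal F(\h_n)=\mathcal F^2(\widehat\h_n)=\set{g(-\cdot)\,:\,g\in\widehat\h_n}$ and reflection is an isometry of $X$, it suffices to show that $\bigcup_n\widehat\h_n$ — the span of the indicator functions $\chi_{i+[0,\f1n)^d}$, $i\in L_n$ — is dense in $X$.

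The observation that makes this work is that on functions supported in a fixed ball $\overline{B_R(0)}$ the norm $\|\cdot\|_X$ is dominated by the $L^2$-norm: indeed $\|g\|_X^2\le(1+R^4)\|g\|_{L^2}^2$ whenever $\supp g\subset\overline{B_R(0)}$. With this in hand the density of $\bigcup_n\widehat\h_n$ in $X$ follows in three steps. First, for $g\in X$ and $\eps>0$ the cutoff $g\chi_{B_R(0)}$ approximates $g$ in $X$ for $R$ large, by dominated convergence applied to $\big(1+|\xi|^4\big)|g|^2\in L^1(\R^d)$. Second, since $g\chi_{B_R(0)}$ is compactly supported, the observation reduces the task of approximating it in $X$ to approximating it in $L^2(\R^d)$. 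Third, the cube-averaging (conditional-expectation) operators $\mathbb E_n$ attached to the grids $\tfrac1n\mathbb Z^d$ satisfy $\mathbb E_n f\to f$ in $L^2(\R^d)$ as $n\to\infty$ (for $f\in C_c(\R^d)$ by uniform continuity, then for all $f$ since $\|\mathbb E_n\|\le1$); and $\mathbb E_n\big(g\chi_{B_R(0)}\big)$ is a step function of mesh $1/n$ supported in $B_{R+1}(0)$, hence lies in $\widehat\h_n$ once $n$ is large enough that each of its cubes lies in $B_n(0)$. Chaining the three estimates produces an element of $\bigcup_n\widehat\h_n$ within $\eps$ of $g$ in $X$.

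The step that calls for the most care is the third: one must check that the approximant really is an element of one of the spaces $\widehat\h_n$ as defined — a combination of the admissible half-open cubes of side $1/n$ — rather than an arbitrary simple function, and one needs the elementary geometric fact that a mesh-$1/n$ cube meeting $\overline{B_R(0)}$ already lies in $B_n(0)$ once $n>R+1$. Everything else is a direct computation, and nothing about $\Om_3$ is used beyond $\dom(-\Delta)=H^2(\R^d)$.
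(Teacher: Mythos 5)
Your proof is correct, and it takes a genuinely different route from the paper's, although both begin the same way: conjugate by the Fourier transform so that $-\Delta$ becomes multiplication by $|\xi|^2$ and the core property becomes graph-norm density (your treatment of the resulting reflection, $\mathcal F^2=$ parity, is fine, and could even be avoided by conjugating with $\mathcal F^{-1}$ instead, which sends $\h_n$ directly to $\widehat\h_n$). The paper approximates $u\in\dom(|\xi|^2)$ by its orthogonal projection $u_n$ onto $\widehat\h_n$ (cube averages over the admissible cubes) and must then control $\||\xi|^2(u_n-u)\|_{L^2}$ directly; the delicate point there is the high-frequency tail, handled by dominating the piecewise-constant weight $F_n$ uniformly in $n$ by $a|\xi|^2+b$, which transfers the tail of $|\xi|^2u_n$ onto the fixed integrable quantity $(a|\xi|^2+b)u$. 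You avoid that uniform estimate altogether by truncating first: dominated convergence kills the tail of $g$ in the weighted norm, on functions supported in a fixed ball the weighted norm is controlled by the plain $L^2$ norm, and then only $L^2$-convergence of the mesh-$\f1n$ conditional expectations is needed — a softer and arguably cleaner argument, whose only cost is that the approximant depends on the cutoff radius rather than being the single canonical projection of $u$. Your two bookkeeping points — that the approximant is genuinely a combination of the admissible half-open cubes, and that for fixed $R$ every mesh-$\f1n$ cube meeting $\overline{B_R(0)}$ is among those indexed by $L_n$ for $n$ large — are exactly the right things to check; note only that this uses the intended reading of the paper's definition of $L_n$ (cube positions ranging over the points of $\f1n\mathbb Z^d$ inside the ball of radius $n$, as the surrounding text states), a reading which the paper's own proof requires as well.
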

\begin{proof}
	By means of the Fourier transform the assertion is equivalent to the space $\bigcup_{n\in\N}\widehat\h_n$ being a core for the multiplication operator $u\mapsto |\xi|^2 u$ in $L^2(\R^d)$. To verify this, we have to show that for every $u\in\dom(|\xi^2|)$ there exists a sequence $u_n\in\h_n$ such that 
	\begin{enumi}
		\item $\|u_n-u\|_{L^2(\R^d)}\to 0,$
		\item $\left\||\xi|^2(u_n-u)\right\|_{L^2(\R^d)}\to 0$
	\end{enumi}
	Point (i) is easily shown by choosing
	\begin{align}\label{eq:u_n_def}
		u_n:=\sum_{i\in L_n}\left\langle u , n^{\f d2}\chi_{i+[0,\f1n)^d}\right\rangle n^{\f d2}\chi_{i+[0,\f1n)^d}.
	\end{align}
	Indeed, for smooth $u$ the $L^2$-convergence of $u_n$ to $u$ is standard, while the general case follows by a density argument. We omit the technical details. To show point (ii), let $R>0$ and decompose the norm in (ii) as
	\begin{align}\label{eq:B_R-decomposition}
		\left\||\xi|^2(u_n-u)\right\|_{L^2(\R^d)}^2 &= \int_{B_R} \left||\xi|^2(u_n-u)\right|^2\,d\xi
		+
		\int_{\R^d\setminus B_R} \left||\xi|^2(u_n-u)\right|^2\,d\xi,
	\end{align}
	where $B_R$ denotes the ball of radius $R$ centered at 0. We first estimate the second term on the right hand side. To this end, we let $u_n$ be defined by \eqref{eq:u_n_def} and employ the shorthand notation $\chi_i:=n^{\f d2}\chi_{i+[0,\f1n)^d}$. On the whole space we have
	\begin{align*}
		\left\| |\xi|^2u_n \right\|_{L^2(\R^d\setminus B_R)}^2 &= \left\| |\xi|^2\sum_{i\in L_n}\langle u,\chi_i\rangle\chi_i \right\|_{L^2(\R^d\setminus B_R)}^2\\
		&\leq \sum_{i\in L_n\setminus B_R} |\langle u,\chi_i\rangle|^2 \left\||\xi|^2\chi_i\right\|_{L^2(\R^d)}^2\\
		&\leq \sum_{i\in L_n\setminus B_R}\|u\|_{L^2(i+[0,\f1n)^d)}^2\left\|n^{\f d2}|\xi|^2\right\|_{L^2(i+[0,\f1n)^d)}^2,
	\end{align*}
	where we have used the fact that $\supp(\chi_i)\cap\supp(\chi_j)=\emptyset$ for $i\neq j$. The factor $\big\|n^{\f d2}|\xi|^2\big\|_{L^2(i+[0,\f1n)^d)}$ on the right hand side is clearly bounded by $\sup_{\xi\in i+[0,\f1n)^d}|\xi|^2$. Thus, if we define a function $F_n$ by
	\begin{align*}
		F_n(\xi):=\sum_{i\in\f1n\mathbb Z^d}\left(\sup_{\eta\in i+[0,\f1n)^d}|\eta|^2\right)\chi_{i},
	\end{align*} 
	then we will have (note that $F_n$ is constant on each of the cubes $i+[0,\f1n)^d$)
	\begin{align*}
		\left\| |\xi|^2u_n \right\|_{L^2(\R^d\setminus B_R)}^2
		&\leq \sum_{i\in L_n\setminus B_R}\|u\|_{L^2(i+[0,\f1n)^d)}^2 F_n(\xi)^2\\
		&= \sum_{i\in L_n\setminus B_R}\left\|F_n(\xi)u\right\|_{L^2(i+[0,\f1n)^d)}^2\\
		&\leq \left\|F_n(\xi)u\right\|_{L^2\big(\R^d\setminus B_{R-\f{\sqrt{d}}{n}}\big)}^2
	\end{align*}
	Next, we note that it is easy to see that there exist constants $a,b>0$ such that $F_n(\xi)\leq a|\xi|^2+b$ uniformly in $n$ (see Figure \ref{fig:xi^2}).
	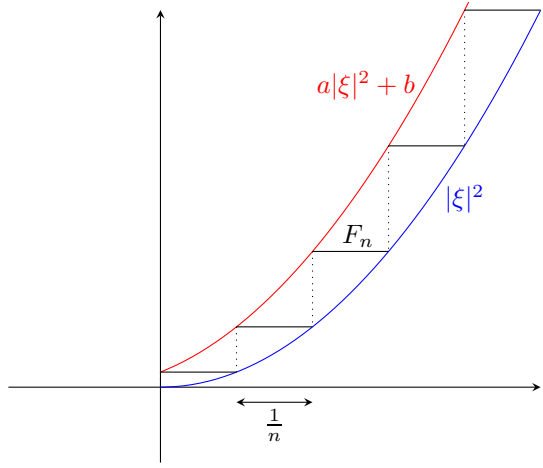
\begin{figure}
		\centering
		\begin{tikzpicture}[>=stealth, domain=0:5, smooth]
    \clip (-2,-1) rectangle (5.1,5.1);

	\draw[->] (-2,0) -- (5,0);
	\draw[->] (0,-1) -- (0,5);
	
	\draw[color=blue]   plot (\x,0.2*\x*\x);
	\draw[color=red]   plot (\x,0.2*\x*\x+0.4*\x+0.2);
	
	\foreach \x in {1,2,3,4,5}{
		\draw (\x,\x^2/5) -- (\x-1,\x^2/5);
		\draw[dotted] (\x-1,0.2*\x^2) -- (\x-1,0.2*\x^2-0.2*2*\x+0.2);
	}
	
	\draw (4,2.5) node{${\color{blue}|\xi|^2}$};
	\draw (2.7,4) node{${\color{red}a|\xi|^2+b}$};
	\draw (2.6,2) node{${\color{black}F_n}$};
	
	\draw[<->] (1,-0.2) -- (2,-0.2);
	\draw (1.5,-0.5) node{$\tfrac 1 n$};
\end{tikzpicture}
		\caption{Sketch of function $F_n$.}\label{fig:xi^2}
	\end{figure}
	Overall we conclude that
	\begin{align*}
		\left\| |\xi|^2u_n \right\|_{L^2(\R^d\setminus B_R)}^2
		&\leq \left\|F_n(\xi)u\right\|_{L^2\big(\R^d\setminus B_{R-\f{\sqrt{d}}{n}}\big)}^2\\
		&\leq \left\|(a|\xi|^2+b)u\right\|_{L^2(\R^d\setminus B_{R-1})}^2,
	\end{align*}
	where the last term on the right hand side is finite because by assumption $u\in\dom(|\xi|^2)$. In fact, from this last inequality we can see immediately that 
	\begin{align*}
		\left\| |\xi|^2u_n \right\|_{L^2(\R^d\setminus B_R)}^2\to 0
	\end{align*}
	as $R\to\infty$ uniformly in $n$. Estimating the second term on the right hand side of eq. \eqref{eq:B_R-decomposition} is now straightforward. We get
	\begin{align*}
		\int_{\R^d\setminus B_R} \left||\xi|^2(u_n-u)\right|^2\,d\xi &\leq 
		\left\| |\xi|^2u_n\right\|^2_{L^2(\R^d\setminus B_R)} + \left\| |\xi|^2u\right\|^2_{L^2(\R^d\setminus B_R)}\\
		&\leq \left\|(a|\xi|^2+b)u\right\|_{L^2(\R^d\setminus B_{R-1})}^2 + \left\| |\xi|^2u\right\|^2_{L^2(\R^d\setminus B_R)}.
	\end{align*}
	Now let $\eps>0$ and choose $R$ so large that $\big\|(a|\xi|^2+b)u\big\|_{L^2(\R^d\setminus B_{R-1})}^2 + \big\| |\xi|^2u\big\|^2_{L^2(\R^d\setminus B_R)}<\eps$. From eq. \eqref{eq:B_R-decomposition} we then see that
	\begin{align*}
		\limsup_{n\to\infty}\left\||\xi|^2(u_n-u)\right\|_{L^2(\R^d)}^2 &\leq \limsup_{n\to\infty}\int_{B_R} \left||\xi|^2(u_n-u)\right|^2\,d\xi+\eps\\
		&\leq \limsup_{n\to\infty}R^2\int_{B_R} \left|u_n-u\right|^2\,d\xi+\eps\\
		&= \eps,
	\end{align*}
	because $u_n\to u$ in $L^2(\R^d)$. Since $\eps$ was arbitrary, it follows that 
	\begin{align*}
		\limsup_{n\to\infty}\left\||\xi|^2(u_n-u)\right\|_{L^2(\R^d)}^2=0,
	\end{align*}
	which concludes the proof.
\end{proof}

Our strategy for proving Theorem \ref{th:Schroedinger} is as follows. By the assumptions on $V$ stated in the definition of $\Om_3$ and Lemma \ref{lemma:H_n_is_core} we know that we have $\Om_3\subset\Om_2$, if we choose $\h=L^2(\R^d)$ and $\h_n$ as in \eqref{eq:Hn_Def}. Hence, we already know from Theorem \ref{th:perturbation} that $\Gamma^{(2)}_n(H)\to\sigma(H)$ for all $H\in\Om_3$. However, $\Gamma^{(2)}_n$ uses the matrix elements $\big\langle H e_k^{(n)}, e_j^{(n)}\big\rangle$, which we are not allowed to access in Theorem \ref{th:Schroedinger}. Therefore, we will define a new algorithm $\Gamma^{(3)}_n$ which only accesses the information provided in $\Lambda_3$ and which satisfies $\Gamma^{(3)}_n(H)\approx\Gamma^{(2)}_n(H)$ for $H\in\Om_3$ in an appropriate sense.

\paragraph{The algorithm.}
As described above, we need to approximate the matrix elements $\langle-\Delta e_k^{(n)},e_m^{(n)}\rangle$ and $\langle Ve_k^{(n)},e_m^{(n)}\rangle$ using only a finite amount of information provided in the set $\Lambda_3$. We start with the Laplacian, which is the simpler case. Indeed, we have
\begin{align*}
	\left\langle-\Delta e_k^{(n)},e_m^{(n)}\right\rangle &= \left\langle |\xi|^2n^{\f d2}\chi_{i_k+[0,\f1n)^d} \,,\, n^{\f d2}\chi_{i_m+[0,\f1n)^d}\right\rangle\\
	&= n^d\delta_{mk}\int_{i_k+[0,\f1n)^d} |\xi|^2\,d\xi\\
	&= \f{n\delta_{mk}}{3}\sum_{j=1}^d\left( \left((i_k)_j+\f1n\right)^3 - (i_k)_j^3 \right).
\end{align*}
Note that these are precisely the terms in the third factor in eq. \eqref{eq:def_Lambda_3}. 

Next, we will compute the matrix elements $\langle Ve_k^{(n)},e_m^{(n)}\rangle$. Since any algorithm can only use finitely many values of $V$, we will have to perform an approximation procedure. To this end, let $l\in\N$ and define a lattice $P_l\subset \R^d$ by
\begin{align*}
	P_l:= \f1l\mathbb Z^d\cap Q_l(0),
\end{align*}
where $Q_l$ denotes the cube of edge length $l$ centered at 0. Next, let 
\begin{align*}
	V_l(x) := \sum_{i\in P_l} V(i)\chi_{i+[0,\f1l)^d}.
\end{align*}
\begin{lemma}\label{lemma:V-Vk}
	For any function $f\in C^1(\R^d)$ one has 
	\begin{align*}
		\left\|f-\sum_{i\in P_l} f(i)\chi_{i+[0,\f1l)^d}\right\|_{\infty}\leq \f{\|\nabla f\|_\infty}{l},
	\end{align*}
\end{lemma}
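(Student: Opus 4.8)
The plan is to reduce the estimate to a one-line application of the fundamental theorem of calculus on each of the small half-open cubes $C_i:=i+[0,\f1l)^d$, $i\in P_l$, on which the approximant $g_l:=\sum_{i\in P_l}f(i)\chi_{C_i}$ is constant and equal to $f(i)$. Equivalently, on the tiled region $R_l:=\bigcup_{i\in P_l}C_i$ one may write $g_l=f\circ\phi_l$, where $\phi_l:\R^d\to\R^d$ sends $x$ to the grid point $\f1l\lfloor lx\rfloor$ (taken componentwise), i.e.\ the lower corner of the unique cube $C_i$ containing $x$; by construction $|x-\phi_l(x)|\le\f1l$ uniformly on $R_l$ when distances are measured in the sup-norm of $\R^d$.

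First I would fix $x\in R_l$ and apply the fundamental theorem of calculus to $t\mapsto f\bigl(\phi_l(x)+t(x-\phi_l(x))\bigr)$ on $[0,1]$, obtaining
\[
 f(x)-g_l(x)=f(x)-f(\phi_l(x))=\int_0^1\nabla f\bigl(\phi_l(x)+t(x-\phi_l(x))\bigr)\cdot\bigl(x-\phi_l(x)\bigr)\,dt ,
\]
so that $\bigl|f(x)-g_l(x)\bigr|\le\|\nabla f\|_\infty\,|x-\phi_l(x)|\le\f{\|\nabla f\|_\infty}{l}$. Taking the supremum over $x\in R_l$ gives the bound there. To obtain it on all of $\R^d$ one uses that in the intended application $f=V$ has $\supp V\subset C$, so that for $l$ large $\supp V\subset R_l$ and both $V$ and $g_l=V_l$ vanish outside $R_l$; hence the inequality is global.

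I do not anticipate a real obstacle: the estimate is essentially immediate from $\nabla f\in L^\infty$ and $\diam(C_i)\le\f1l$. The only points deserving a little care are (i) bookkeeping of which norm on $\R^d$ (and the matching norm on $\nabla f$) is in force, so that the constant comes out as $1/l$ rather than $\sqrt d/l$; and (ii) the observation — needed for the statement to be literally true for a general, not necessarily compactly supported, $f\in C^1(\R^d)$ — that $P_l$ is a finite grid, so the inequality should really be read on the bounded tiled region $R_l$, a restriction that is vacuous for the potentials in $\Om_3$ once $l$ is large enough that $\supp V\subset R_l$.
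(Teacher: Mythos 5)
Your proof is correct and essentially identical to the paper's: both bound $f(x)-f(i)$ via the fundamental theorem of calculus (a line integral of $\nabla f$) along the segment from the grid point $i$ to $x$ inside the cube $i+[0,\f1l)^d$, yielding the factor $\|\nabla f\|_\infty/l$. Your two side remarks --- the choice of norms on $\R^d$ that makes the constant $1/l$ rather than $\sqrt d/l$, and the observation that, $P_l$ being finite, the inequality is literally valid only on the tiled region $\bigcup_{i\in P_l}\bigl(i+[0,\f1l)^d\bigr)$ (which is harmless in the application since $\supp V\subset C\subset Q_l(0)$ for $l$ large) --- are legitimate points that the paper's one-line proof glosses over, but they do not change the argument.
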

\begin{proof}
	This follows immediately from the identity
	\begin{align*}
		f(x)-f(i) = \int_{[i,x]}\nabla f(t)\cdot dt,
	\end{align*}
	where $i\in P_l$ and $[i,x]$ denotes a line segment connecting $i$ to $x\in i+[0,\f1l)^d$.
\end{proof}
In order to define our approximation of $\langle Ve_k^{(n)},e_m^{(n)}\rangle$, we additionally introduce the step function approximation
\begin{align*}
	E_{k,l}(x) := \sum_{i\in P_l} e_k^{(n)}(l)\chi_{i+[0,\f1l)^d}.
\end{align*}
\begin{lemma}\label{lemma:V_ij-Vl_ij}
	For $-\Delta+V\in\Om_3$ one has 
	\begin{align*}
		\left|\left\langle Ve_k^{(n)},e_m^{(n)}\right\rangle - \left\langle V_l E_{k,l},E_{m,l}\right\rangle\right|\leq  \f{3|C|}{l}M(2\pi)^{-\f d2}n^{3-\f{d}{2}}d,
	\end{align*}
	for all $l>2\diam(C)$, where $M$ and $C$ are as in eq. \eqref{eq:def_Omega3}.
\end{lemma}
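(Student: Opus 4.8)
The plan is to bound the difference $\left|\left\langle Ve_k^{(n)},e_m^{(n)}\right\rangle - \left\langle V_l E_{k,l},E_{m,l}\right\rangle\right|$ by inserting the intermediate quantity $\left\langle V_l e_k^{(n)},e_m^{(n)}\right\rangle$ and applying the triangle inequality, so that
\begin{align*}
	\left|\left\langle Ve_k^{(n)},e_m^{(n)}\right\rangle - \left\langle V_l E_{k,l},E_{m,l}\right\rangle\right| &\leq \left|\left\langle (V-V_l)e_k^{(n)},e_m^{(n)}\right\rangle\right| + \left|\left\langle V_l e_k^{(n)},e_m^{(n)}\right\rangle - \left\langle V_l E_{k,l},E_{m,l}\right\rangle\right|.
\end{align*}
For the first term, I would use Cauchy–Schwarz together with $\|e_k^{(n)}\|_{L^2}=\|e_m^{(n)}\|_{L^2}=1$ to get a bound by $\|(V-V_l)e_k^{(n)}\|_{L^2}$, then observe that $V$ (hence $V-V_l$ for $l>2\diam(C)$, so that $\supp(V_l)$ still sits inside a set of measure comparable to $|C|$) is supported in $C$, so the $L^2$ norm is controlled by $\|V-V_l\|_\infty \cdot |C|^{1/2}$ times $\|e_k^{(n)}\|_\infty$ on that set; Lemma \ref{lemma:V-Vk} applied to $f=V$ gives $\|V-V_l\|_\infty\leq \|\nabla V\|_\infty/l \leq M/l$. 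Combined with the sup-bound $\|e_k^{(n)}\|_\infty\leq (2\pi)^{-d/2}dn^{3-d/2}$ from Lemma \ref{lemma:e_k-Bound}, this term is $O\big(\tfrac{|C|}{l}Mn^{3-d/2}d\big)$.

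For the second term, I would write $\left\langle V_l e_k^{(n)},e_m^{(n)}\right\rangle - \left\langle V_l E_{k,l},E_{m,l}\right\rangle$ as a sum of two pieces by replacing $e_k^{(n)}$ by $E_{k,l}$ first and then $e_m^{(n)}$ by $E_{m,l}$, i.e. $\left\langle V_l(e_k^{(n)}-E_{k,l}),e_m^{(n)}\right\rangle + \left\langle V_l E_{k,l},e_m^{(n)}-E_{m,l}\right\rangle$. Each piece is again estimated by Cauchy–Schwarz: the key input is $\|e_k^{(n)}-E_{k,l}\|_\infty\leq \|\nabla e_k^{(n)}\|_\infty/l$, which is exactly Lemma \ref{lemma:V-Vk} applied to $f=e_k^{(n)}$, and again $\|\nabla e_k^{(n)}\|_\infty\leq (2\pi)^{-d/2}dn^{3-d/2}$ by Lemma \ref{lemma:e_k-Bound}. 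Since $V_l$ is supported in the cube $Q_l(0)$ but its values are those of $V$, hence zero outside $C$, the relevant integration is over a set of measure $|C|$ (this is where $l>2\diam(C)$ is used, to ensure the lattice cells carrying nonzero values of $V$ stay within a bounded neighbourhood of $C$), and $\|V_l\|_\infty\leq \|V\|_\infty\leq M$. Collecting all three contributions and absorbing the numerical constants into the stated factor $3$ yields the claimed bound.

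The main obstacle I anticipate is the bookkeeping around the supports: one must be careful that $V_l$ and $E_{k,l}$, being step-function approximations on the coarse lattice $P_l$, do not spill their mass outside a controlled region, and that the measure of the effective support really is $|C|$ (up to the correction absorbed by the hypothesis $l>2\diam(C)$) rather than the full $l^d$. Getting the constant exactly $3$ rather than some larger absolute constant requires combining the three $O(1/l)$ terms tightly, but this is routine once the sup-norm bounds and the support estimates are in place.
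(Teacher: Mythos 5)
Your decomposition is a genuinely different route from the paper's. The paper observes that $V_lE_{k,l}E_{m,l}$ is exactly the $P_l$-step approximation of the single $C^1$ function $f=Ve_k^{(n)}e_m^{(n)}$, applies Lemma \ref{lemma:V-Vk} once to this product, integrates the resulting pointwise error over the ($|C|$-sized) support, and then uses the product rule, so that each of the three resulting terms is bounded by $M(2\pi)^{-\frac d2}dn^{3-\frac d2}$. You instead telescope through $\langle V_le_k^{(n)},e_m^{(n)}\rangle$ and $\langle V_lE_{k,l},e_m^{(n)}\rangle$ and estimate each difference by Cauchy--Schwarz, applying Lemma \ref{lemma:V-Vk} separately to $V$ and to $e_k^{(n)},e_m^{(n)}$.

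However, as described your estimates do not deliver the stated right-hand side, and the discrepancy is not just the constant $3$. Cauchy--Schwarz with $\|e_m^{(n)}\|_{L^2}=1$ converts the support into a factor $|C|^{1/2}$ in your first two terms, whereas the asserted bound carries $|C|$; since $C$ is an arbitrary compact set, $|C|<1$ is possible and then your bound is strictly weaker, so the lemma does not follow. In the third term, $\langle V_lE_{k,l},e_m^{(n)}-E_{m,l}\rangle$, the natural Cauchy--Schwarz estimate over $\supp V_l$ produces the product $\|E_{k,l}\|_\infty\|\nabla e_m^{(n)}\|_\infty$, i.e.\ the square of the bound of Lemma \ref{lemma:e_k-Bound}, which exceeds the single factor $(2\pi)^{-\frac d2}dn^{3-\frac d2}$ in the statement whenever that quantity is at least $1$. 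To land on the asserted inequality one essentially has to argue as the paper does: sup-norm (not $L^2$) estimates on the support, so that only one ``derivative factor'' appears per term, the remaining factors being controlled by $\|e_k^{(n)}\|_\infty\leq(2\pi n)^{-\frac d2}\leq 1$ --- a bound contained in the proof of Lemma \ref{lemma:e_k-Bound} but not in its statement. A second, smaller gap: Lemma \ref{lemma:V-Vk} cannot be invoked verbatim for $f=e_k^{(n)}$, because $E_{k,l}$ vanishes outside $Q_l(0)$ while $e_k^{(n)}$ is not compactly supported, so $\|e_k^{(n)}-E_{k,l}\|_\infty$ is not $O(1/l)$; the pointwise estimate holds only on the cubes indexed by $P_l$, which suffices here precisely because $V_l$ is supported there, but this restriction is what saves your second and third terms and must be stated. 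Your telescoping strategy is salvageable in the sense that any explicit bound of the form $K(C,M,d)\,n^{3-\frac d2}/l$ would serve the paper's subsequent Corollary and the choice of $l(n)$ after adjusting constants, but as a proof of the lemma as stated it does not establish the claimed inequality.
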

\begin{proof}
	By assumption $l$ is large enough such that $C\subset Q_l(0)$ (with $C$ from eq. \eqref{eq:def_Omega3}). We calculate the error
\begin{align*}
	\left|\left\langle Ve_k^{(n)},e_m^{(n)}\right\rangle - \left\langle V_l E_{k,l},E_{m,l}\right\rangle\right| &= \left|\sum_{i\in P_l}\int_{i+[0,\f1l)}Ve_k^{(n)}e_m^{(n)}\,dx - \sum_{i\in P_l}\int_{i+[0,\f1l)}V_lE_{k,l}E_{m,l}\,dx\right|\\
	&\leq \sum_{i\in P_l}\int_{i+[0,\f1l)}\left|Ve_k^{(n)}e_m^{(n)} - V_lE_{k,l}E_{m,l}\right|dx\\
	&\leq \sum_{i\in P_l}\int_{i+[0,\f1l)} l^{-1}\left\|\nabla\big(Ve_k^{(n)}e_m^{(n)}\big)\right\|_\infty\chi_C\, dx\\
	&= l^{-1} |C| \left\|\nabla\big(Ve_k^{(n)}e_m^{(n)}\big)\right\|_\infty\\
	&\leq  \f{|C|}{l} \left(\left\|\nabla Ve_k^{(n)}e_m^{(n)}\right\|_\infty\! +\left\|V\nabla\big(e_k^{(n)}\big)e_m^{(n)}\right\|_\infty\! + \left\|Ve_k^{(n)}\nabla\big(e_m^{(n)}\big)\right\|_\infty\right)\\
	&\leq  \f{3|C|}{l}M(2\pi)^{-\f d2}n^{3-\f{d}{2}}d,
\end{align*}
where we have used Lemma \ref{lemma:V-Vk} in the third line and Lemma \ref{lemma:e_k-Bound} and the fact that $\|V\|_{\mathcal C^1}\leq M$ in the last line.
\end{proof}
\begin{corollary}
	If we denote $H_n:=P_n(-\Delta+V)|_{\h_n}$ and $H_n^l:= P_n(-\Delta+V_l)|_{\h_n}$, then 
	\begin{align*}
		\left\| H_n - H_n^l \right\|_{L(\h_n)} \leq \f{3|C|}{l}M(2\pi)^{-\f d2}n^{4-\f{d}{2}}d.
	\end{align*}
\end{corollary}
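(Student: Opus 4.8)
The plan is to read off the matrix of $H_n-H_n^l$ in the orthonormal basis $\{e_k^{(n)}\}$ of $\h_n$, bound its entries by Lemma~\ref{lemma:V_ij-Vl_ij}, and then pass from that entrywise estimate to an operator-norm estimate by the crude bound $\|A\|_{L(\h_n)}\le\dim(\h_n)\,\max_{k,m}|A_{km}|$.

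First I would observe that the Laplacian contributes the \emph{same} matrix to $H_n$ and to $H_n^l$: in the basis $\{e_k^{(n)}\}$ both have $(k,m)$-entry $\langle-\Delta e_k^{(n)},e_m^{(n)}\rangle=\frac{n\delta_{mk}}{3}\sum_{j=1}^d\big(((i_k)_j+\tfrac1n)^3-(i_k)_j^3\big)$, which are the diagonal numbers appearing in $\Lambda_3$. Hence the $(k,m)$-entry of $H_n-H_n^l$ equals $\langle V e_k^{(n)},e_m^{(n)}\rangle-\langle V_lE_{k,l},E_{m,l}\rangle$ once $H_n^l$ is identified with the matrix actually assembled from the data of $\Lambda_3$ (diagonal Laplacian entries plus the computable numbers $\langle V_lE_{k,l},E_{m,l}\rangle$); if instead one keeps the literal operator $P_n(-\Delta+V_l)|_{\h_n}$, the extra term $\langle V_lE_{k,l},E_{m,l}\rangle-\langle V_le_k^{(n)},e_m^{(n)}\rangle$ is of strictly lower order in $n$ by Lemmas~\ref{lemma:V-Vk} and~\ref{lemma:e_k-Bound} and is harmlessly absorbed into the constant. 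Since $l>2\diam(C)$ forces $C\subset Q_l(0)$, Lemma~\ref{lemma:V_ij-Vl_ij} applies and yields $|(H_n-H_n^l)_{km}|\le\frac{3|C|}{l}M(2\pi)^{-d/2}n^{3-d/2}d$ for every admissible $k,m$.

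It then remains to turn this into a norm bound. Using the Hilbert--Schmidt estimate $\|H_n-H_n^l\|_{L(\h_n)}\le\|H_n-H_n^l\|_{\mathrm{HS}}=\big(\sum_{k,m}|(H_n-H_n^l)_{km}|^2\big)^{1/2}\le\dim(\h_n)\,\max_{k,m}|(H_n-H_n^l)_{km}|$ together with $\dim(\h_n)=|L_n|=n$ (the normalisation under which $\h_n$ is indexed by $k\in\{1,\dots,n\}$), one multiplies the previous estimate by $n$ and obtains $\|H_n-H_n^l\|_{L(\h_n)}\le\frac{3|C|}{l}M(2\pi)^{-d/2}n^{4-d/2}d$, which is the assertion.

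The argument is essentially bookkeeping and I expect no genuine obstacle; the two points needing care are (i) checking that the $-\Delta$-block cancels exactly in the difference, so that Lemma~\ref{lemma:V_ij-Vl_ij} really controls the entries of $H_n-H_n^l$ (and, if one insists on the literal $V_l$, that the $E_{k,l}$-versus-$e_k^{(n)}$ discrepancy is negligible), and (ii) verifying that the factor $\dim(\h_n)=n$ lost in the operator-norm bound is affordable for the intended use: for every fixed $n$ the right-hand side still tends to $0$ as $l\to\infty$, so one may later choose $l=l(n)$ large enough, depending on $n$, to make $\|H_n-H_n^l\|$ small compared with the $\tfrac1n$-threshold used in $\tilde\Gamma_n$ (cf. Remark~\ref{Remark}).
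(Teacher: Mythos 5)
Your proposal is correct and follows essentially the same route as the paper: bound the matrix entries of $H_n-H_n^l$ via Lemma \ref{lemma:V_ij-Vl_ij} and then pay a factor $\dim(\h_n)$ (treated as $n$, consistent with the paper's indexing) to pass to the operator norm, your Hilbert--Schmidt step being just a repackaging of the paper's direct Cauchy--Schwarz computation $\|A-B\|\leq n\sup_{k,m}|A_{km}-B_{km}|$. In fact you are slightly more careful than the paper, which silently identifies $\langle V_l e_k^{(n)},e_m^{(n)}\rangle$ with the computable quantity $\langle V_l E_{k,l},E_{m,l}\rangle$ when invoking the lemma, a discrepancy you explicitly flag and dispose of.
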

\begin{proof}
	By Lemma \ref{lemma:V_ij-Vl_ij} the matrix elements of $H_n$ and $H_n^l$ satisfy $\big|(H_n)_{km} - (H_n^l)_{km}\big|\leq \f{3|C|}{l}M(2\pi)^{-\f d2}n^{3-\f{d}{2}}d$. Now note that for any two matrices $A=(A_{km})$ and $B=(B_{km})$ one has
	\begin{align*}
		\left\|(A-B)x\right\|^2_{\h_n} &= \sum_{k=1}^n\left|\sum_{m=1}^n (A_{km}-B_{km})x_m\right|^2\\
		&\leq \left(\sup_{k,m}|A_{km}-B_{km}|\right)\sum_{k,m=1}^n|x_m|^2\\
		&= n\left(\sup_{k,m}|A_{km}-B_{km}|\right)\|x\|_{\h_n}^2.
	\end{align*}
	This immediately implies the assertion.
\end{proof}
We are finally ready to define our algorithm. Let $n\in\N$ and choose $l(n)\in\N$ large enough such that $\f{3|C|}{l(n)}M(2\pi)^{-\f d2}n^{4-\f{d}{2}}d<\f{1}{2n}$. Define for $H=-\Delta+V\in\Om_3$
\begin{align*}
	\Lambda_{\Gamma_n^{(3)}}(H) &:= \left\{ \rho_i \,|\,i\in P_{l(n)}\right\}\cup \left\{e_k^{(n)}(i)\,\Big|\,i\in P_{l(n)},\;k\in\{1,\dots,n\}\right\}\\
	&\qquad\qquad\qquad\qquad\qquad \cup \bigg\{\tfrac{n\delta_{mk}}{3}\sum_{j=1}^d\left( \left((i)_j+\tfrac1n\right)^3 - (i)_j^3 \right)\,\Big|\, i\in L_n  \bigg\}\\
	\Gamma_n^{(3)}(H) &:= \left\{ \lambda\in G_n\,\bigg|\, \left\|(H_n^{l(n)}-\lambda)^{-1}\right\|^{-1} \leq \f2n \right\}\cup\Gamma_n^{(1)}(-\Delta)
\end{align*}
with the convention that $\|(H_n^{l(n)}-\lambda)^{-1}\|^{-1}=0$ when $\lambda\in\sigma(H_n^{l(n)})$. Note that $\Lambda_{\Gamma_n^{(3)}}(H)$ is a finite set for each $H\in\Om_3$ and by Lemma \ref{lemma:finite_testing} determining whether $\left\|(H_n^{l(n)}-\lambda)^{-1}\right\|^{-1} \leq \f2n$ requires only finitely many algebraic operations. Moreover, since $\Lambda_3$ contains all matrix elements of the Laplacian, we conclude that computing $\Gamma_n^{(1)}(-\Delta)$ can also be done by performing a finite amount of algebraic operations on the information provided. Overall, we conclude that each $\Gamma_n^{(3)}$ is an arithmetic algorithm in the sense of Definition \ref{def:Algorithm}. 

\paragraph{Convergence.}
It remains to prove that $\Gamma_n^{(3)}(H)\to\sigma(H)$ in the Attouch-Wets metric. To this end, let $\lambda\in G_n$ and note that by the second resolvent identity we have
\begin{align*}
	(H_n^{l(n)}-\lambda)^{-1} - (H_n-\lambda)^{-1} = (H_n^{l(n)}-\lambda)^{-1}(H_n-H_n^{l(n)})(H_n-\lambda)^{-1}
\end{align*}
Taking norms on both sides and using the reverse triangle inequality we obtain\footnote{
	For $\lambda\in\rho(H_n)\cap\rho\big(H_n^{l(n)}\big)$ this inequality follows directly from the second resolvent identity, while for $\lambda\in\sigma(H_n)\cup\sigma\big(H_n^{l(n)}\big)$ it is shown by a Neumann series argument.
}
\begin{align*}
	\left|\|(H_n^{l(n)}-\lambda)^{-1}\|^{-1} - \|(H_n-\lambda)^{-1}\|^{-1}\right| &\leq \|H_n-H_n^{l(n)}\|\\
	&\leq \f{3|C|}{l(n)}M(2\pi)^{-\f d2}n^{4-\f{d}{2}}d\\
	&\leq \f{1}{2n},
\end{align*}
where the last line follows from our choice of $l(n)$. Now, if $\lambda\in\Gamma_n^{(3)}(H)$ the above inequality implies that 
\begin{align*}
	\|(H_n-\lambda)^{-1}\|^{-1} &\leq \|(H_n^{l(n)}-\lambda)^{-1}\|^{-1}+\f{1}{2n}\\
	&\leq \f 2n +\f{1}{2n}\\
	&\leq \f{3}{n}
\end{align*}
and hence $\lambda\in\Xi_n(H)$ (cf. Remark \ref{Remark}). Similarly, if $\lambda\in\Gamma^{(2)}_n(H)$ then
\begin{align*}
	\|(H_n^{l(n)}-\lambda)^{-1}\|^{-1}&\leq \|(H_n-\lambda)^{-1}\|^{-1} + \f{1}{n}\\
	&\leq \f {1}{n} +\f{1}{2n}\\
	&\leq \f{2}{n}
\end{align*}
and hence $\lambda\in\Gamma_n^{(3)}(H)$. Thus, we have the inclusions
\begin{align*}
	\Gamma^{(2)}_n(H) \subset \Gamma_n^{(3)}(H) \subset \Xi_n(H).
\end{align*}
Since $\Gamma^{(2)}_n(H)\to\sigma(H)$ and $\Xi_n(H)\to\sigma(H)$ by Theorem \ref{th:perturbation} and Remark \ref{Remark}, we conclude that $ \Gamma_n^{(3)}(H)\to\sigma(H)$ as well. This concludes the proof of Theorem \ref{th:Schroedinger}.


\nocite{*}
\bibliography{mybib.bib}
\bibliographystyle{alphaabbr}

\end{document}